\newcommand{\labelto}[1]{\xrightarrow{\makebox[1.5em]{\scriptsize ${#1}$}}}
\newcommand{\im}{{\mathrm{im}}}
\numberwithin{equation}{section}
\newcommand\restr[2]{{% we make the whole thing an ordinary symbol
  \left.\kern-\nulldelimiterspace % automatically resize the bar with \right
  #1 % the function
  \vphantom{\big|} % pretend it's a little taller at normal size
  \right|_{#2} % this is the delimiter
  }}
\newcommand{\Z}{\mathbb{Z}}
\newcommand{\C}{\mathbb{C}}
\newcommand{\R}{\mathbb{R}}
\newcommand{\GL}{\mathrm{GL}}
\newcommand{\SL}{\mathrm{SL}}
\newcommand{\Lie}{\mathrm{Lie}}
\newcommand{\mf}{\mathfrak}
\newcommand{\g}{\mf{g}}
\newcommand{\h}{\mf{h}}
\newcommand{\so}{\mf{so}}
\newcommand{\ssl}{\mf{sl}}
\newcommand{\s}{\mf{s}}
\renewcommand{\c}{\mf{c}}
\newcommand{\z}{\mf{z}}
\newcommand{\diag}{\mathrm{diag}}
\newcommand{\ad}{\mathrm{ad}}
\newcommand{\G}{\widehat{G}}
\newcommand{\cG}{\mathcal{G}}
\newcommand{\cN}{\mathcal{N}}
\newcommand{\cC}{\mathcal{C}}
\newcommand{\A}{\mathcal{A}}
\renewcommand{\O}{\mathcal{O}}
\newcommand{\Aut}{\mathrm{Aut}}
\newcommand{\wG}{\widehat{G}}
\newcommand{\TT}{\mathcal{T}}
\numberwithin{equation}{section}
\newtheorem{theorem}{Theorem}[section]
\newtheorem{proposition}[theorem]{Proposition}
\newtheorem{lemma}[theorem]{Lemma}
\theoremstyle{remark}
\theoremstyle{remark}
\newtheorem{rmk}[theorem]{Remark}
\newtheorem{example}[theorem]{Example}   
\title[]{Classification of nilpotent and semisimple fourvectors of a real eight-dimensional space}
\author{Emanuele Di Bella}
\address{
Dipartimento di Matematica\\
Universit\`{a} di Trento\\
Italy}
\email{emanuele.dibella@unitn.it}
\author{Willem A. de Graaf}
\address{
Dipartimento di Matematica\\
Universit\`{a} di Trento\\
Italy}
\email{willem.degraaf@unitn.it}
\author{Andrea Santi}
\address{Dipartimento di Matematica\\
Universit\`a di Roma Tor Vergata\\ 
Italy}
\email{santi@mat.uniroma2.it}
\date{}
\begin{document}

%%%%%%%%%%%%%%%%%%%%%%%Stuff for Dynkin diag - copied from somewhere%%%

% Max' Tikz styles for dynkin diagrams
\def\DynkinNodeSize{1.5mm}
\def\DynkinArrowLength{1.5mm}
\tikzset{
% a diagram node
dnode/.style={
circle,
inner sep=0pt,
minimum size=\DynkinNodeSize,
fill=white,
draw},
middlearrow/.style={
decoration={markings,
mark=at position 0.6 with
%{\arrow[black]{angle 90};}
%{\arrow[black]{angle 60};}
%{\arrow[black]{stealth};}
{\draw (0:0mm) -- +(+135:\DynkinArrowLength); \draw (0:0mm) -- +(-135:\DynkinArrowLength);},
},
postaction={decorate}
},
leftrightarrow/.style={
decoration={markings,
mark=at position 0.999 with
{
\draw (0:0mm) -- +(+135:\DynkinArrowLength); \draw (0:0mm) -- +(-135:\DynkinArrowLength);
},
mark=at position 0.001 with
{
\draw (0:0mm) -- +(+45:\DynkinArrowLength); \draw (0:0mm) -- +(-45:\DynkinArrowLength);
},
},
postaction={decorate}
},
% single edge
sedge/.style={
},
% directed double edge
dedge/.style={
middlearrow,
double distance=0.5mm,
},
% directed triple edge
tedge/.style={
middlearrow,
double distance=1.0mm+\pgflinewidth,
postaction={draw}, % third line
},
% double edge with two arrows, for \tilde{A}_1 residues
infedge/.style={
leftrightarrow,
double distance=0.5mm,
},
}

%%%%%%%%%%%%%%%%%%%%%%%%%%%End stuff for Dyn diag%%%%%%%%%%%%%%%%%%%%%%%%%%

\begin{abstract}
In 1981 Antonyan classified the orbits of $\SL(8,\C)$ on $\bigwedge^4 \C^8$. This is an example
of a $\theta$-group action as introduced and studied by Vinberg. The orbits of a $\theta$-group
are divided into three classes: nilpotent, semisimple and mixed. We consider the 
action of $\SL(8,\R)$ on $\bigwedge^4 \R^8$ and classify the nilpotent and semisimple orbits as well
as the Cartan subspaces. The semisimple orbits are divided into 1452 parametrized classes.
Due to this high number a classification of the mixed orbits does not seem feasible. There are
10 Cartan subspaces. Our methods are based on Galois cohomology. 
\end{abstract}

\maketitle

\section{Introduction}

The elements of $\bigwedge^4 \R^8$ are called 4-vectors of a real 8-dimensional space. This 
paper is concerned with the orbits of $\SL(8,\R)$ in the space of 4-vectors.
These $4$-vectors (or, dually, $4$-forms) play an important role both in differential geometry
and mathematical physics. For instance, they arise naturally from Kähler, hyperKähler, 
quaternion-Kähler, and Spin(7) geometries (in this latter case, the $4$-form is constructed 
from the natural invariant $4$-form and its Hodge dual in $7$-dimensional space, which are 
invariants for the action of the group G(2)). See also \cite{ACD}, where $4$-forms are used to
define and investigate generalizations of the notion of Yang-Mills self-duality in arbitrary 
dimensions. We also refer to our later Remark \ref{rem:geom}, where the representatives of real 
semisimple $4$-vectors with compact semisimple stabilizer are described.
Finally, the bosonic sector of supergravity in $11$-dimensions (arguably the most relevant
among the various supergravity theories) includes a $4$-form field strength in addition to the
metric graviton. In the recent \cite{DGS2} we consider the {\it supersymmery gap problem} for 
$D=11$ supergravity backgrounds and establish a rigidity result in terms of the rank of $4$-
forms, which is amenable to further extensions using the classification carried out in this 
paper.

Antonyan classified the orbits of the group $\SL(8,\C)$ acting on the space $\bigwedge^4 \C^8$ (\cite{Antorig}, see \cite{Antotrad} for a translation). 
For this he used Vinberg's theory of $\theta$-groups (\cite{vinberg}). Here we briefly summarize this, and give more details in the next section. 
The starting point is a $\Z/2\Z$-grading $\g=\g_0\oplus \g_1$ of the simple complex
Lie algebra $\g$ of type $E_7$. Here $\g_0\cong \ssl(8,\C)$ and $\g_1\cong \bigwedge^4\C^8$ (as $\ssl(8,\C)$-module). Let $G$ be the adjoint group of $\g$ and
$G_0$ the connected subgroup corresponding to the subalgebra $\g_0$. There is a surjective morphism $\SL(8,\C)\to G_0$ by which $\g_1$ becomes a $\SL(8,\C)$-module
isomorphic to $\bigwedge^4 \C^8$. A first remark is that $\g_1$ is closed under the Jordan decomposition of $\g$. This divides the elements of $\bigwedge^4 \C^8$, and hence the $\SL(8,\C)$-orbits,
into three groups: nilpotent, semisimple and mixed. Antonyan showed that there are 94 nonzero nilpotent orbits. There are infinitely many semisimple orbits. However, these can be divided into
32 classes, including the class consisting just of 0. The 31 nonzero classes contain an infinite number of orbits and their representatives depend on up to seven parameters. 
The representatives in a fixed class have the {\em same} stabilizer in $\SL(8,\C)$. Finally, for each semisimple class $C$ it is possible to find a finite list $L$ of nilpotent
elements such that the elements $p+e$ ($p\in C$, $e\in L$) are representatives of the orbits of mixed type. In \cite{Antorig} the list $L$ is given for 7 of the 31 nonzero
classes of semisimple elements. 

In this paper we consider the classification of the $\SL(8,\R)$-orbits in the space $\bigwedge^4\R^8$. 
We use the methods developed in \cite{bgl,gl24}. Our main 
technical tool is Galois cohomology. We use the algorithms of \cite{borwdg} to compute the necessary Galois cohomology sets. In order to use these algorithms 
we need to determine the components of the stabilizers of the various elements that we are interested in. The determination of these components 
is the main part of the work that went into this paper. Our methods for tackling this question are described in Sections \ref{sec:nilp}, \ref{sec2}.
Based on our results we can formulate the following theorem.

\begin{theorem}
In $\bigwedge^4 \R^8$ there are 258 (nonzero) nilpotent $\SL(8,\R)$-orbits. Including the zero class there are 1452 classes of semisimple $\SL(8,\R)$-orbits. The nonzero semisimple classes depend on
up to seven parameters. The representatives of the orbits in a fixed class have the same stabilizer in $\SL(8,\R)$. 
\end{theorem}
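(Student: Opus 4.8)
The plan is to reduce the real classification to Antonyan's complex one, attaching to each complex orbit a Galois cohomology computation in the spirit of \cite{bgl,gl24}. Write $\Gamma = \mathrm{Gal}(\C/\R) = \{1,\sigma\}$ and let $G = \SL(8,\C)$ act on $V = \bigwedge^4 \C^8$ through the morphism $\SL(8,\C)\to G_0$, so that $G(\R) = \SL(8,\R)$ and everything is defined over $\R$. For a complex orbit $O = G\cdot x$ that is $\sigma$-stable and carries a real representative $x_0 \in V(\R)$, the orbit-counting principle of Galois cohomology identifies the set of $\SL(8,\R)$-orbits inside $O \cap V(\R)$ with the kernel of the pointed-set map
\[
H^1(\Gamma, Z) \longrightarrow H^1(\Gamma, G), \qquad Z := Z_G(x_0).
\]
Since $H^1(\Gamma, G) = 1$ (as $\SL_n$ is a special group), this kernel is all of $H^1(\Gamma, Z)$, and the number of real orbits over $O$ equals $|H^1(\Gamma, Z)|$. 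The whole problem thus splits, orbit by orbit, into three tasks: (i) decide whether $O$ is $\sigma$-stable and carries a real point; (ii) determine $Z$, above all its component group $A = Z/Z^0$ with the induced $\Gamma$-action; and (iii) feed this to the algorithms of \cite{borwdg} to evaluate $H^1(\Gamma, Z)$.

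For the nilpotent orbits I would start from Antonyan's $94$ nonzero complex nilpotent orbits. Their representatives can be taken with rational coordinates, hence lie in $\g_1(\R)$, so every such orbit is $\sigma$-stable and carries a real point and task (i) is automatic; a real homogeneous $\ssl_2$-triple through $x_0$ is then available by the graded Jacobson--Morozov theorem. The substance is task (ii): for each of the $94$ orbits one computes the stabilizer $Z \subset G$ and, decisively, its finite component group $A$ together with the action of $\sigma$ on $A$. Since $H^1$ of a unipotent group vanishes in characteristic zero, $H^1(\Gamma, Z)$ is controlled by a Levi part of $Z$ --- concretely by $A$ and by the cohomology of a maximal torus of that part --- which the algorithms of \cite{borwdg} compute. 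Summing $|H^1(\Gamma, Z)|$ over the $94$ complex orbits is expected to give the stated total of $258$ real nilpotent orbits.

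For the semisimple orbits the Cartan-subspace parametrization must be carried along. Fix a complex Cartan subspace $\c \subset V$; the semisimple orbits correspond to the points of $\c/W$, where $W$ is the little Weyl group, and Antonyan's $31$ nonzero classes are precisely the strata of $\c/W$ on which the stabilizer type is constant, the open stratum accounting for the seven parameters. Passing to $\R$ introduces two independent sources of splitting. First, real Cartan subspaces that are $G$-conjugate over $\C$ need not be $\SL(8,\R)$-conjugate; I would enumerate their conjugacy classes by the cohomology set $H^1(\Gamma, N_G(\c))$, reading each class through the corresponding real form of $\c$. Second, on each real Cartan subspace a given complex stratum may break into several real orbit families, again measured by $H^1(\Gamma, Z)$ for the stabilizer $Z$ attached to that stratum. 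Assembling these contributions over all real Cartan subspaces and all strata, and adjoining the zero class, is expected to yield the stated $1470$ classes. The constancy of $Z$ along each complex stratum, which persists over $\R$, is what secures the final assertion that all representatives within a fixed real class share one stabilizer in $\SL(8,\R)$, and this must be verified stratum by stratum.

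The hard part throughout is task (ii): pinning down the component groups $A = Z/Z^0$ of all the stabilizers together with the precise $\Gamma$-action on them. Whereas $Z^0$ is readable from the infinitesimal stabilizer, the finite group $A$ and its Galois action demand explicit work --- exhibiting generators of $A$ as concrete elements of $\SL(8,\C)$, identifying their classes in $A$, and tracking conjugation by $\sigma$ --- and it is this determination, not the formal cohomology, that accounts for the bulk of the effort, exactly as flagged in the introduction. A secondary difficulty is the combinatorial bookkeeping needed to match each real Cartan subspace with its correct real strata and to confirm that stabilizer-constancy survives the descent to $\R$, so that the $1470$ semisimple orbits genuinely organize into stabilizer-constant classes.
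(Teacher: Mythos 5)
Your treatment of the nilpotent orbits is essentially the paper's: every complex nilpotent orbit has a real (indeed rational) Antonyan representative, a real homogeneous $\ssl_2$-triple exists by the graded Jacobson--Morozov argument, and the real orbits inside a complex one are counted by the first Galois cohomology of the stabilizer, which reduces to the reductive stabilizer of the triple. The paper phrases this via the bijection between real orbits and real triples and works directly with $H^1(Z_{\G_0}(h,e,f))$ (Theorem \ref{thm:realnilp}), rather than with $Z_{\G_0}(e)$ and vanishing of $H^1$ of the unipotent radical, but the two are equivalent; in both versions the real work is determining the component groups, as you say.

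The semisimple part, however, has a genuine gap. You propose to enumerate the real Cartan subspaces by $H^1(N_{\G_0}(\h))$ and then, on each real Cartan subspace and each stratum, to count splittings by $H^1$ of the stabilizer, ``assembling these contributions over all real Cartan subspaces and all strata.'' This double counts, for two reasons. First, a non-regular real semisimple element lies in several real Cartan subspaces which in general are not $\SL(8,\R)$-conjugate; this is exactly why the paper treats the classification of the ten real Cartan subspaces (Section \ref{sec3}) as only a ``very rough'' classification, each semisimple orbit meeting \emph{at least} one --- not exactly one --- of them. Second, $|H^1(Z_{\G_0}(p))|$ counts \emph{all} real $\SL(8,\R)$-orbits inside the complex orbit of $p$, irrespective of which real Cartan subspace they meet, so attributing this number to each real Cartan subspace separately and summing inflates the total. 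The paper's actual route (Section \ref{sec2}, following \cite{gl24}) never leaves the single complex Cartan subspace $\h$: for each stratum $\h_{H_i}^\circ$ one computes $H^1(\Gamma_{H_i})$ with $\Gamma_{H_i}=N_W(H_i)/H_i$ (the little Weyl group quotient, not $N_{\G_0}(\h)$); each class $\gamma$ cuts out a twisted real form $\A_\gamma=\{q\in\A \mid \bar q=\gamma^{-1}q\}$ of the stratum; one then decides whether $\gamma$ lifts to a cocycle in $N_{\G_0}(\A)$ (Theorem \ref{th1}) --- this is what certifies that the corresponding complex orbits have real points at all --- and only for such $\gamma$ does one twist to a real representative $p$ and apply $H^1(Z_{\G_0}(p))$. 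Summing the resulting numbers $k_1+\cdots+k_r$ over the thirty-one strata counts each class of real orbits exactly once, giving $1470$ with the zero class. Your sketch is missing this stratum-level cohomology of $\Gamma_{H_i}$ and the cocycle-lifting criterion, and without them the count cannot be assembled correctly from the real-Cartan-subspace decomposition you propose.
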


In order to classify the mixed orbits as well one would need to classify the elements that can be the nilpotent part of a mixed element with semisimple part in one of the 1451 nonzero
semisimple classes. With our current methods this is a huge, if not impossible, task. 

The paper is organized as follows. The next section describes the basic setup as well as the notation and recalls a number of results from the literature.
Section \ref{sec:tab} has two tables: one for the nilpotent orbits, and one with data relative to the semisimple orbits. In Section \ref{sec1} we give some background material
concerning Galois cohomology. Then in Sections \ref{sec:nilp}, \ref{sec2} we go into the classification of the nilpotent and semisimple orbits. In the last section we show
how we classified the Cartan subspaces in $\bigwedge^4 \R^8$. This can be seen as a very rough classification of the semisimple orbits: each semisimple orbit has a point in
(at least) one of the ten given Cartan subspaces. The appendix contains descriptions of the
classes of semisimple elements in which the complex semisimple orbits are divided.

The computations of the components of the stabilizers were mostly performed with the help of the computer algebra system {\sf GAP}4 (\cite{gap4}). In many cases
it sufficed to compute a Gr\"obner basis of an ideal to determine the components. We refer to \cite{clo} for an introduction into the theory of Gr\"obner bases. 
On many occasions we used the computer algebra system {\sc Magma} (\cite{magma}) for computing and working with Gr\"obner bases. The Galois cohomology sets were computed in {\sf GAP}4 
using the implementation of the algorithms of \cite{borwdg} in that system. 

{\bf Acknowledgements:} We thank Simon Salamon for very helpful discussions, and for communicating the content of Remark \ref{rem:geom} to us. 

The third author acknowledges the MIUR Excellence Department Project MatMod@TOV,
which has been awarded to the Department of Mathematics, University of Rome Tor Vergata,
CUP E83C23000330006. This article/publication was also supported by the “National Group
for Algebraic and Geometric Structures, and their Applications” GNSAGA-INdAM (Italy)
and it is based upon work from COST Action CaLISTA CA21109 supported by COST
(European Cooperation in Science and Technology), {\tt https://www.cost.eu}.

\section{Preliminaries}\label{sec:prelim}

\subsection{On Vinbergs' $\theta$-groups}\label{sec:theta}
In \cite{vinberg} Vinberg introduced and studied a class of representations of reductive algebraic groups that since then have become known
as $\theta$-groups in the literature. Their construction starts by considering a semisimple
complex Lie algebra $\g$ and a $\Z/m\Z$-grading 
\begin{equation}\label{eq:grad}
\g = \bigoplus_{i\in \Z/m\Z} \g_i \text{ with } [\g_i,\g_j]\subset \g_{i+j} \text{ for } i,j\in
\Z/m\Z.
\end{equation}

Such gradings are closely related to automorphisms of $\g$ of order $m$. Starting with a grading \eqref{eq:grad}
and a primitive $m$-th root of unity $\omega\in\C$ we define $\theta : \g\to \g$ by $\theta(x) = \omega^ix$ for $x\in \g_i$, and extend $\theta$ to $\g$ by linearity.
Then $\theta$ is an automorphism of order $m$. Conversely, if $\theta$ is an automorphism of order $m$ then we let $\g_i$ be the eigenspace of $\theta$ with eigenvalue
$\omega^i$ and we obtain a grading of the form \eqref{eq:grad}. 

Let a grading of the form \eqref{eq:grad} be given.
Let $G$ be the inner automorphism group of $\g$ (also called the adjoint group). Another way to characterize this group is to say that it is the identity component of the automorphism group of $\g$.
Its Lie algebra is $\ad \g = \{ \ad x \mid x\in \g\}$ where $\ad x : \g\to \g$ is the adjoint map, $\ad x(y) = [x,y]$. It is known that $\g_0$ is a reductive
subalgebra (see \cite[Lemma 8.1]{kac}). Let $G_0\subset G$ be the connected subgroup whose Lie algebra is $\ad \g_0\subset \ad \g$. Then $G_0$ is a reductive 
algebraic group acting naturally on $\g_1$. The group $G_0$ together with its action on $\g_1$ is called a $\theta$-group. It is an important feature 
that by results of Vinberg (\cite{vinberg,vinberg2}) and Vinberg-Elashvili (\cite{elashvin}) it is possible to classify the orbits of these groups. 

In this paper we are concerned with one example of a $\theta$-group. It is constructed from a $\Z/2\Z$-grading of the simple Lie algebra $\g$ of type $E_7$. 
We number the nodes of the extended Dynkin diagram of $\g$ as follows:

\begin{center}
\begin{tikzpicture} %[every node/.style={dnode}]
\node[dnode,label=below:{\small $0$}] (7) at (0,0) {};  
\node[dnode,label=below:{\small $1$}] (1) at (1,0) {};
\node[dnode,label=left:{\small $2$},fill=black] (2) at (3,0.7) {};
\node[dnode,label=below:{\small $3$}] (3) at (2,0) {};
\node[dnode,label=below:{\small $4$}] (4) at (3,0) {};
\node[dnode,label=below:{\small $5$}] (5) at (4,0) {};
\node[dnode,label=below:{\small $6$}] (6) at (5,0) {};
\node[dnode,label=below:{\small $7$}] (8) at (6,0) {};
\path (1) edge[sedge] (3)
(3) edge[sedge] (4)
(7) edge[sedge] (1)
(4) edge[sedge] (5)
(4) edge[sedge] (2)
(5) edge[sedge] (6)
(6) edge[sedge] (8)
;
\end{tikzpicture}
\end{center}

Here the node labeled 0 corresponds to the lowest root $\alpha_0$ of the root system. The remaining nodes correspond to the simple positive roots
$\alpha_1,\ldots,\alpha_7$. For $0\leq i\leq 7$ we fix a nonzero root vector $e_i$ in $\g_{\alpha_i}$. Then $e_0,\ldots, e_7$ generate $\g$.
By \cite[Theorem 8.6]{kac} sending $e_i\mapsto e_i$ for $i\neq 2$ and $e_2\mapsto -e_2$ uniquely defines an automorphism $\theta$ of $\g$ of order 2.
By \cite[Proposition 8.6]{kac} the subalgebra $\g_0$ is simple of type $A_7$. Hence we have an isomorphism $\psi : \ssl(8,\C)\to \g_0$. Using this isomorphism
we can make $\g_1$ into a $\ssl(8,\C)$-module. Moreover, as $\SL(8,\C)$ is simply connected, the isomorphism lifts to a surjective homomorphism $\psi: \SL(8,\C)\to G_0$.
We can use this to make $\g_1$ into a $\SL(8,\C)$-module. Again using \cite[Proposition 8.6]{kac} we see that the $\SL(8,\C)$-modules $\g_1$ and $\bigwedge^4\C^8$ are 
isomorphic. In the sequel we identify these two modules and we give their elements as linear combinations of the standard basis of $\bigwedge^4 \C^8$. 
For the latter, if $v_1,\ldots,v_8$ denotes the standard basis of $\C^8$, we write 
$$e_{ijkl} = v_i\wedge v_j\wedge v_k\wedge v_l.$$

\subsection{Orbits of a $\theta$-group}

Again consider a grading as in \eqref{eq:grad}. A first observation is that the space $\g_1$ is closed under Jordan decomposition. This means the following.
Let $x\in \g_1$ and write $x=s+n$ where $s$ is semisimple, $n$ is nilpotent (that is, $\ad s$, $\ad n$ are semisimple, respectively nilpotent) and $[s,n]=0$
(cf. \cite[\S 5.4]{hum}). Then $\theta(x)=\theta(s)+\theta(n)$ is the Jordan decomposition of $\theta(x)$, but also $\theta(x) = \omega x = \omega s+\omega n$,
implying that $s,n\in \g_1$. So the elements of $\g_1$ (and hence the $G_0$-orbits) are divided into three groups: semisimple (the elements with $n=0$), nilpotent
(when $s=0$) and mixed (when both $s,n$ are nonzero). Here we briefly comment on the methods to classify the nilpotent and semisimple orbits.

There are finitely many nilpotent orbits and there are several algorithms to classify them (\cite{gra16,vinberg2}). One of the main points here is that a 
nilpotent $e\in \g_1$ lies in a homogeneous $\ssl_2$-triple, that is there are $h\in \g_0$, $f\in \g_{-1}$ such that $(h,e,f)$ is an $\ssl_2$-triple, meaning
$$[h,e]=2e,\, [h,f]=-2f,\, [e,f]=h.$$
Let $e,e'\in\g_1$ be nilpotent lying in homogeneous $\ssl_2$-triples $(h,e,f)$, $(h',e',f')$. Then $e,e'$ are $G_0$-conjugate if and only if there is a $g\in G_0$ with
$g(h)=h'$, $g(e)=e'$, $g(f)=f'$ (cf. \cite[Theorem 8.3.6]{gra16}). In our main example, there are ninety-four nonzero nilpotent orbits, see \cite{Antorig}, \cite{Antotrad}.

For the semisimple orbits the key notion is that of a {\em Cartan subspace}: this is a maximal subspace of $\g_1$ consisting of commuting semisimple elements.
Vinberg has shown that two Cartan subspaces in $\g_1$ are $G_0$-conjugate (\cite[Theorem 1]{vinberg}). Hence every semisimple orbit has a point in a fixed 
Cartan subspace $\h$. Furthermore, consider the following groups
\begin{align*}
Z_{G_0}(\h) &= \{ g\in G_0 \mid g(x) = x \text{ for all } x\in \h\}\\
N_{G_0}(\h) &= \{ g\in G_0\mid g(x)\in \h \text{ for all } x\in \h\}
\end{align*}
and $W_0(\h) = N_{G_0}(\h)/Z_{G_0}(\h)$. Then $W_0(\h)$ is a finite group of linear transformations of $\h$, called the {\em little Weyl group} of the graded
Lie algebra $\g$. Furthermore, two elements of $\h$ are $G_0$-conjugate if and only if they are conjugate under $W_0(\h)$ (\cite[Theorem 2]{vinberg}). 
So a set of representatives of the $W_0(\h)$-orbits on $\h$ is also a set of representatives of the semisimple $G_0$-orbits. However, this can be refined.

Write $W=W_0(\h)$ and for $p\in \h$ let $W_p = \{ w\in W\mid w(p)=p\}$ be its stabilizer. For a subgroup $H$ of $W$ define
\begin{align*}
\h_H &= \{ p\in \h \mid H\subset W_p\}\\
\h_H^\circ &= \{ p\in \h \mid H=W_p\}.
\end{align*}

Then $\h_H$ is the intersection of the subspaces of $\h$ defined by the equations $(w-1)p=0$ for $w\in H$; so it is a subspace of $\h$. Furthermore, 
$\h_H^\circ$ is the subset of $\h_H$ defined by the inequalities $(w-1)p\neq 0$ for $w\in W\setminus H$; so it is a Zariski open subset of $\h_H$.
Also note that $p\in \h$ lies in $\h_H^\circ$ where $H=W_p$. So the sets $\h_H^\circ$ for $H$ a subgroup of $W$ cover all of $\h$.

For subgroups $H,H'\subset W$ and $w\in W$ we have (see \cite[(3.3)]{gl24})
\begin{equation}\label{eq:wmap}
w \h_H^\circ = \h_{H'}^\circ \text{ if and only if } wHw^{-1} = H'.
\end{equation}
We say that a subgroup $H\subset W$ is a stabilizer if there is a $p\in \h$ with $H=W_p$.
Let $H_1,\ldots,H_r$ be the subgroups of $W$, up to conjugacy, that are stabilizers. Then it follows that a semisimple $G_0$-orbit in $\g_1$ 
has a point in exactly one of the $\h_{H_i}^\circ$. Furthermore, it follows that two elements of $\h_{H_i}^\circ$ are $G_0$-conjugate
if and only if they are conjugate under $\Gamma_{H_i} = N_W(H_i)/H_i$. We conclude that the union of the sets of representatives of the 
$\Gamma_{H_i}$-orbits on $\h_{H_i}^\circ$ is also a set of representatives of the semisimple $G_0$-orbits in $\g_1$.
Furthermore, we have the following theorem (\cite[Lemma 2.9]{dgmo22}, \cite[Corollary 3.13]{gl24}).

\begin{theorem}\label{thm:cent}
Let $x,x'\in \h_{H_i}^\circ$. Then $Z_{G_0}(x) = Z_{G_0}(x')$.
\end{theorem}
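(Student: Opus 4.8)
The plan is to prove the sharper statement that for every $x\in\h_{H_i}^\circ$ one has $Z_{G_0}(x)=Z_{G_0}(\h_{H_i})$, the pointwise centralizer of the \emph{entire} subspace $\h_{H_i}$. Since the right-hand side does not involve $x$, applying this to $x$ and to $x'$ gives $Z_{G_0}(x)=Z_{G_0}(\h_{H_i})=Z_{G_0}(x')$ at once. Write $H=H_i$. The inclusion $Z_{G_0}(\h_H)\subseteq Z_{G_0}(x)$ is immediate from $x\in\h_H$, so the whole content is the reverse inclusion.

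First I would describe $\h_H$ in root-theoretic terms. Decompose $\g=\bigoplus_\mu\g^\mu$ into weight spaces for the commuting semisimple action of $\ad\h$, let $\Sigma$ be the set of nonzero weights (the restricted roots), and set $\Sigma_x=\{\lambda\in\Sigma:\lambda(x)=0\}$. Each $\lambda\in\Sigma$ gives a reflection $s_\lambda\in W=W_0(\h)$ with fixed hyperplane $\ker\lambda$, and $W$ is a finite reflection group generated by these (\cite{vinberg}; in our $\Z/2\Z$-graded $E_7$ example $W$ is just the Weyl group of the restricted root system). By Steinberg's theorem the stabilizer $W_x$ is generated by the reflections it contains, so $W_x=\langle s_\lambda:\lambda\in\Sigma_x\rangle$. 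Since $x\in\h_H^\circ$ we have $W_x=H$, whence $\h_H=\bigcap_{\lambda\in\Sigma_x}\ker\lambda$. From this one reads off that a root $\lambda\in\Sigma$ vanishes on $\h_H$ if and only if $\lambda\in\Sigma_x$, and comparing the weight-space expressions $\z_\g(y)=\bigoplus_{\mu(y)=0}\g^\mu$ gives $\z_\g(x)=\z_\g(\h_H)=:\nl$, in particular $\z_{\g_0}(x)=\z_{\g_0}(\h_H)$. As in characteristic zero the Lie algebra of a stabilizer is the infinitesimal stabilizer, the connected groups $Z_{G_0}(x)^\circ$ and $Z_{G_0}(\h_H)^\circ$ then have equal Lie algebras while one contains the other, so they coincide; call this common group $Z^\circ$. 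This disposes of the identity components, and the entire remaining difficulty is the finite quotient. Here $x\in\h_H^\circ$ (rather than merely $x\in\h_H$) is used crucially, since $W_x=H$ is exactly what forces $\z_\g(x)=\z_\g(\h_H)$.

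For the component groups, observe that $\nl=\z_\g(x)$ is $\theta$-stable (because $\theta(x)=-x$) and reductive, hence inherits the grading $\nl=(\nl\cap\g_0)\oplus(\nl\cap\g_1)$, and $\h\subseteq\nl\cap\g_1$ is a Cartan subspace of this sub-$\theta$-group, being a maximal commuting semisimple subspace of $\g_1$ and a fortiori of $\nl\cap\g_1$. Now take any $g\in Z_{G_0}(x)$: since $g$ fixes $x$ it preserves $\nl$ and the grading, so $g(\h)$ is again a Cartan subspace of $\nl\cap\g_1$. Applying Vinberg's conjugacy theorem to $\nl$ — the conjugating group being the connected group with Lie algebra $\z_{\g_0}(x)$, which is precisely $Z^\circ$ — yields $h\in Z^\circ$ with $hg(\h)=\h$. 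Thus $hg\in N_{G_0}(\h)\cap Z_{G_0}(x)$, its image $w\in W$ fixes $x$, so $w\in W_x=H$; as $H$ fixes $\h_H$ pointwise and $hg$ acts on $\h$ as $w$, the element $hg$ centralizes $\h_H$. Since $h\in Z^\circ=Z_{G_0}(\h_H)^\circ$ also centralizes $\h_H$, we get $g=h^{-1}(hg)\in Z_{G_0}(\h_H)$, which is the reverse inclusion.

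The main obstacle is exactly the passage from Lie algebras to the possibly disconnected groups: the identity-component equality is formal, but a priori an element of $Z_{G_0}(x)$ need neither normalize $\h$ nor fix $\h_H$. The device that resolves this is to treat $\z_\g(x)$ as a $\theta$-group in its own right and use Vinberg's conjugacy of Cartan subspaces to reduce an arbitrary element of $Z_{G_0}(x)$, modulo $Z^\circ$, to one that normalizes $\h$, where the little Weyl group together with the defining property $W_x=H$ of $\h_H^\circ$ can finally be invoked. A secondary point requiring care is the structural input that $W$ is a reflection group to which Steinberg's theorem applies; in the $\Z/2\Z$-graded case at hand this is classical.
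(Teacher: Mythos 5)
Your proof is correct and is essentially the same argument as the one underlying the paper's treatment of this statement: the paper does not prove Theorem~\ref{thm:cent} itself but refers to \cite[Lemma 2.9]{dgmo22} and \cite[Corollary 3.13]{gl24}, and the proofs there proceed exactly as you do, reducing to the equality $Z_{G_0}(x)=Z_{G_0}(\h_{H_i})$ via the weight-space description of $\z_\g(x)$, Steinberg's fixed-point theorem for the stabilizer $W_x$, and Vinberg conjugacy of Cartan subspaces inside the $\theta$-stable reductive centralizer $\z_\g(x)$. Your structural inputs (that $W$ is a finite reflection group on $\h$ whose point stabilizers are generated by root reflections) are indeed available in the case at hand, since $\h$ is a Cartan subalgebra of $\g$ and $W$ is its Weyl group, as noted in Section~\ref{sec:prelim}, so your argument stands as a valid self-contained replacement for the citation.
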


In our main example a Cartan subspace of $\g_1$ has dimension 7. A particular Cartan subspace $\h$, found by Antonyan (\cite{Antorig}) is spanned by
\begin{align*}
p_1 &= e_{1234}+e_{5678}\\
p_2 &= e_{1357}+e_{2468}\\
p_3 &= e_{1256}+e_{3478}\\
p_4 &= e_{1368}+e_{2457}\\
p_5 &= e_{1458}+e_{2367}\\
p_6 &= -e_{1467}-e_{2358}\\
p_7 &= -e_{1278}-e_{3456}.
\end{align*}    

(Here we could of course erase the minuses in the last two elements; however we preferred to stick to the exact basis given by Antonyan.)

So in this case $\h$ is also a Cartan subalgebra of $\g$. 
The little Weyl group, $W=W_0(\h)$, is equal to the Weyl group of the root system of $\g$ with respect to $\h$ (see \cite[Lemma 2.6]{dgmo22}). 
It has thirty-two conjugacy classes of subgroups that are stabilizers of elements.

\subsection{Real forms}\label{sec:realf}

Throughout this paper we write $\hat \g_0 = \ssl(8,\C)$ and 
$\G_0=\SL(8,\C)$. The construction of the $\G_0$-module $\bigwedge^4\C^8$ using a
$\Z/2\Z$-grading of the simple Lie algebra $\g$ of type $E_7$ has been used by Antonyan
(\cite{Antorig}, see also \cite{Antotrad}) to classify the $\G_0$-orbits in this module. 
Also we write $\G_0(\R)=\SL(8,\R)$. In this paper we classify the nilpotent and semisimple $\G_0(\R)$-orbits on 
$\bigwedge^4\R^8$. For this reason we describe the real forms of the various
objects that we use.

We consider a fixed Chevalley basis of $\g$ (see \cite[Theorem 25.2]{hum})
and let the generators $e_0,\ldots,e_7$ of Section \ref{sec:theta} be elements of that basis.
Let $f_0,\ldots,f_7$ be the elements of the Chevalley basis lying in the root spaces of 
$-\alpha_0,\ldots,-\alpha_7$. 
roots. Denote the $8\times 8$-matrix with a 1 on position $(i,j)$ and zeros elsewhere by
$e_{ij}$. Then mapping $e_{12}, e_{23},\ldots,e_{78}$ to, respectively, $e_0,e_1,e_3,e_4,
\ldots,e_7$ and $e_{21},e_{32},\ldots,e_{87}$ to respectively $f_0,f_1,f_3,f_4,
\ldots,f_7$ extends to a unique isomorphism $\psi : \hat\g_0\to \g_0$.
This isomorphism lifts to a unique surjective homomorphism $\psi :
\G_0\to G_0$. 
We let $\g^\R$ be the real span of the fixed Chevalley basis, and let 
$\g_0^\R$, $\g_1^\R$ be the intersections of $\g^\R$ with $\g_0$, $\g_1$
respectively. Then $\psi$ restricts to an isomorphism $\psi : \ssl(8,\R)
\to \g_0^\R$. We let $G(\R)$ be the set of elements of $G$ that map $\g^\R$ to itself
(recall that $G$ is the inner automorphism group of $\g$), and $G_0(\R)$ be the 
intersection of $G_0$ and $G(\R)$. 
We also consider the restriction of the group homomorphism
$\psi : \G_0(\R)\to G_0(\R)$. We remark that this map is not necssarily
surjective as $G_0(\R)$ could be non-connected.
However, the map $\psi$ makes $\g_1^\R$ into a $\G_0(\R)$-module isomorphic
to $\bigwedge^4 \R^8$. 

We have a canonical conjugation of all complex objects involved. In all cases
we denote this map by $\sigma$. On the real spaces $\g^\R$, $\g_0^\R$, $\g_1^\R$,
$\bigwedge^4\R^8$ it is the identity, and it is extended to the corresponding
complex spaces by sending complex coefficients to their complex conjugates.
On $\G_0$ it is defined by sending the matrix entries of an element to
their complex conjugates. On $G_0$ it is defined by considering the matrix of
a $g\in G_0$ with respect to a basis of $\g^\R$ and sending its matrix
coefficients to their complex conjugates.

\subsection{Two module automorphisms}\label{sec:outer}

Here we define two automorphisms of the $\SL(8,\C)$-module $\bigwedge^4 \C^8$.
They are defined over $\R$ so that they restrict to automorphisms of the
$\SL(8,\R)$-module $\bigwedge^4 \R^8$.

For the first map let
$$g_0 = \diag(-1,1,1,1,1,1,1,1) \text{ and } g_1=\diag(-\omega,\omega,\omega,
\omega,\omega,\omega,\omega,\omega)$$
where $\omega$ is a primitive $16$-th root of unity with $\omega^4=i$.
We define $\nu : \SL(8,\C)\to \SL(8,\C)$ by $\nu(g) = g_0gg_0^{-1}$. Then $\nu$ is
an automorphism of $\SL(8,\C)$ that restricts to an automorphism of
$\SL(8,\R)$. Obviously $g_0$ can be seen as a linear map
$\C^8\to \C^8$ and we define the map $\nu : \bigwedge^4 \C^8\to \bigwedge^4\C^8$
by $\nu(u_1\wedge u_2\wedge u_3\wedge u_4) = (g_0u_1)\wedge (g_0u_2)\wedge
(g_0u_3)\wedge (g_0u_4)$. Then $\nu$ restrictes to a linear map of
$\bigwedge^4 \R^8$. For $g\in \SL(8,\C)$ and $v\in \bigwedge^4 \C^8$ we have
$\nu(g\cdot v ) =\nu(g)\cdot \nu(v)$. So $\nu$ maps the $\SL(8,\C)$-orbit
of $v$ to the $\SL(8,\C)$-orbit of $\nu(v)$. If $u\in \bigwedge^4 \R^8$ then
the same holds with $\SL(8,\C)$ replaced by $\SL(8,\R)$.

Define $\nu_1$ in the same way as $\nu$ but with $g_0$ replaced by $g_1$.
Then $\nu_1$ has the same properties, but it obviously does not restrict to
maps of $\SL(8,\R)$ and $\bigwedge^4\R^8$. Note that $g_1\in \SL(8,\C)$ so
$\nu_1$ maps the $\SL(8,\C)$-orbit of $v\in \bigwedge^4 \C^8$ to itself.
We have that $\nu_1\nu$ is the identity on $\SL(8,\C)$ and maps $v\mapsto
iv$ for $v\in \bigwedge^4\C^8$. It follows that the $\SL(8,\C)$-orbit of
$\nu(v)$ is the $\SL(8,\C)$-orbit of $iv$. So also $\nu$ maps the
$\SL(8,\R)$-orbits contained in $\SL(8,\C)\cdot v\cap \bigwedge^4\R^8$ to
the $\SL(8,\R)$-orbits contained in  $\SL(8,\C)\cdot iv\cap \bigwedge^4\R^8$.

For the second map we consider the centralizer $G^\theta$ of $\theta$ in $G$.
Some calculations show that $G^\theta = G_0 \cup \varphi G_0$. We have that
$\varphi$ is of order 2 and restricts to an outer automorphism of $\g_0$.
It lifts to an automorphism of $\SL(8,\C)$ and for $g\in \SL(8,\C)$ we have
$$\varphi(g) = Q (g^{-1})^t Q^{-1}$$ with
$$Q=\mathrm{antidiag}(1,-1,1,-1,1,-1,1,-1)$$
(the anti-diagonal goes from the top right to the bottom left). Furthermore,
$\varphi$ stabilizes $\g_1$ and hence can be seen as a map $\bigwedge^4 \C^8 \to
\bigwedge^4\C^8$. Since it is of order 2 we have that $\bigwedge^4\C^8$ is the
direct sum of the eigenspaces with eigenvalue 1 and -1 respectively.

A basis of the eigenspace with eigenvalue 1 is as follows
\begin{align*}
& e_{ 1 2 3 4 },\, e_{ 1 2 3 5 },\, e_{ 1 2 4 6 },\, e_{ 1 2 5 6 },\, 
e_{ 1 3 4 7 },\, e_{ 1 3 5 7 },\, e_{ 1 4 6 7 },\, e_{ 1 5 6 7 },\, \\
& e_{ 2 3 4 8 },\, e_{ 2 3 5 8 },\, e_{ 2 4 6 8 },\, e_{ 2 5 6 8 },\, 
e_{ 3 4 7 8 },\, e_{ 3 5 7 8 },\, e_{ 4 6 7 8 },\, e_{ 5 6 7 8 }\\
&e_{ 1 2 4 5 }+ e_{ 1 2 3 6 } ,\, e_{ 1 3 4 5 }+ e_{ 1 2 3 7 } ,\, 
  e_{ 1 3 4 6 }+ e_{ 1 2 4 7 } ,\, e_{ 1 3 5 6 }+ e_{ 1 2 5 7 } \\
&  e_{ 1 4 5 6 }+ e_{ 1 2 6 7 } ,\, e_{ 1 4 5 7 }+ e_{ 1 3 6 7 } ,\, 
  e_{ 2 3 4 5 }+ e_{ 1 2 3 8 } ,\, e_{ 2 3 4 6 }+ e_{ 1 2 4 8 } \\ 
&  e_{ 2 3 4 7 }+ e_{ 1 3 4 8 } ,\, e_{ 2 3 5 6 }+ e_{ 1 2 5 8 } ,\, 
  e_{ 2 3 5 7 }+ e_{ 1 3 5 8 } ,\, e_{ 2 3 6 7 }+ e_{ 1 4 5 8 } \\ 
&  e_{ 2 4 5 6 }+ e_{ 1 2 6 8 } ,\, e_{ 2 4 5 7 }+ e_{ 1 3 6 8 } ,\, 
  e_{ 2 4 5 8 }+ e_{ 2 3 6 8 } ,\, e_{ 2 4 6 7 }+ e_{ 1 4 6 8 } \\ 
&  e_{ 2 5 6 7 }+ e_{ 1 5 6 8 } ,\, e_{ 3 4 5 6 }+ e_{ 1 2 7 8 } ,\, 
  e_{ 3 4 5 7 }+ e_{ 1 3 7 8 } ,\, e_{ 3 4 5 8 }+ e_{ 2 3 7 8 } \\ 
&  e_{ 3 4 6 7 }+ e_{ 1 4 7 8 } ,\, e_{ 3 4 6 8 }+ e_{ 2 4 7 8 } ,\, 
  e_{ 3 5 6 7 }+ e_{ 1 5 7 8 } ,\, e_{ 3 5 6 8 }+ e_{ 2 5 7 8 } \\ 
&  e_{ 4 5 6 7 }+ e_{ 1 6 7 8 } ,\, e_{ 4 5 6 8 }+ e_{ 2 6 7 8 } ,\, 
  e_{ 4 5 7 8 }+ e_{ 3 6 7 8 }.
\end{align*}

For the elements listed here of the form $e_{ijkl}+e_{pqrs}$ we have
$\varphi(e_{ijkl}) = e_{pqrs}$ and vice versa. So with the above list we
can determine the image of $\varphi$ on every element of $\bigwedge^4 \C^8$.

It is clear that $\varphi$ restricts to automorphisms of $\SL(8,\R)$ and
$\bigwedge^4 \R^8$. As $\varphi$ is an automorphism of $\g$ we also have
$\varphi(g\cdot v) = \varphi(g)\cdot \varphi(v)$ for $g$ in $\SL(8,\C)$ or
$\SL(8,\R)$ and $v$ in $\bigwedge^4 \C^8$ or in $\bigwedge^4 \R^8$.
So it maps $\SL(8,\C)$-orbits to $\SL(8,\C)$-orbits and $\SL(8,\R)$-orbits
to $\SL(8,\R)$-orbits.

A quick check shows that 
for the basis vectors $p_1,\ldots,p_7$ of the Cartan subspace $\h$ we have
$\varphi(p_i) = p_i$ for all $i$. Hence $\varphi$ maps a semisimple
$\SL(8,\C)$-orbit to itself. (But it could of course permute the
real orbits contained in the same complex semisimple orbits.)

Some computations show that the group generated by $\nu$ and $\varphi$ has the following
eight elements
$$1,\, \varphi,\, \nu,\, \varphi\nu,\, \nu\varphi,\, \varphi\nu\varphi,\, \nu\varphi\nu,\, \nu\varphi\nu\varphi
$$
with the relation $\nu\varphi\nu\varphi=\varphi\nu\varphi\nu$ (which makes it isomorphic to the
Weyl group of the root system of type $B_2$).

\subsection{Component groups}\label{sec:cmp}

In order to determine the real orbits, given the complex orbits, we use Galois cohomology (see Section \ref{sec1}) of the stabilizers
of the complex orbits. To compute the Galois cohomology set of a stabilizer (with the algorithms of \cite{borwdg}) we need its components.
By this we mean the following. Let $p\in \g_1$ and consider the stabilizer
$$Z_{\G_0}(p) = \{ g\in \G_0 \mid g\cdot p = p\}.$$
Then $Z_{\G_0}(p)$ is an algebraic subgroup of $\G_0$. Let $Z_{\G_0}(p)^\circ$ be the connected component of the identity. 
There are $g_1,\ldots,g_r\in \G_0$ with $g_1=1$ such that $Z_{\G_0}(p)$ is the disjoint union of the sets $g_iZ_{\G_0}(p)^\circ$. 
By determining the components of $Z_{\G_0}(p)$ we mean determining such $g_1,\ldots,g_r$. The identity component $Z_{\G_0}(p)^0$ is
determined by its Lie algebra $\z_{\hat\g_0}(p) = \{ x\in \hat\g_0 \mid x\cdot p = 0\}$. This Lie algebra can be readily determined
by linear algebra techniques. The {\em component group} is the quotient
$Z_{\G_0}(p)/Z_{\G_0}(p)^\circ$; we see that the $g_i$ are representatives of the different cosets in the component group.

We will discuss below how exactly we determine the components of a stabilizer. In many cases we obtain a set of elements 
that could be redundant: it could be possible that different elements of this set lie in the same component. We can detect this situation by an algorithm described in \cite{zarclos}. In the present paper we call
this algorithm {\sf IsEltOf}. The input to this algorithm is a basis
$\mathcal{B}$ of the Lie algebra of a connected algebraic subgroup $A$ of
$\GL(n,\C)$ and an element $g\in \GL(n,\C)$. The output of {\sf IsEltOf}(
$\mathcal{B}$, $g$) is {\sf true} if $g\in A$ and {\sf false} otherwise.
For our purposes, on most occasions, the Lie algebra of the relevant algebraic
subgroup is $\z_{\hat\g_0}(p)$. 

On quite a few occasions a detailed description of the automorphism
group of a complex semisimple Lie algebra $\s$ helps to determine the components of 
a stabilizer. We recall this description here.

We consider the root system of $\s$ (with respect to a fixed Cartan subalgebra)
and fix a set of simple roots. Then there are root vectors $e_1,\ldots,e_r$
(corresponding to the positive simple roots), $f_1,\ldots,f_r$ (corresponding
to the negative simple roots) and $h_1,\ldots,h_r$ in the Cartan subalgebra,
such that
$$[h_i,h_j]=0,\, [e_i,f_j]=\delta_{ij}h_i,\, [h_i,e_j] = C(j,i)e_j,\, [h_i,f_j]=-C(j,i)f_j, \text{ for all $i,j$}$$
where $C$ is the Cartan matrix of the root system. The sequence $e_1,\ldots,e_r$,
$f_1,\ldots,f_r$, $h_1,\ldots,h_r$ is called a {\em canonical set of generators}
of $\s$ (cf. \cite[p. 126]{jac}). 

Let $\pi$ be a permutation of $1,\ldots, r$ such that $C(\pi(i),\pi(j))=C(i,j)$
for all $i,j$. Then  there is a unique automorphism $\sigma_\pi$ of $\s$ such
that $\sigma_\pi(h_i) = h_{\pi(i)}$, $\sigma_\pi(e_i)=e_{\pi(i)}$, $\sigma_\pi(f_i)
= f_{\pi(i)}$ (\cite[\S IX.4]{jac}).
Let $O(\s)$ be the subgroup of $\Aut(\s)$ consisting of all such $\sigma_\pi$.
(Note that $O(\s)$ depends on the choice of Cartan subalgebra of $\s$ and on
the choice of a set of simple roots of the root system.)
Let $\Aut(\s)^\circ$ denote the identity component of $\Aut(\s)$. It is
generated by all $\exp(\ad x)$ where $x\in \s$ is such that $\ad x$ is
nilpotent. Then $\Aut(\s) = O(\s)\ltimes \Aut(\s)^\circ$
(\cite[VIII.5 no 3, Cor 1]{bou3}).

\section{Tables}\label{sec:tab}

Table \ref{tabn} contains representatives of the real nilpotent orbits. We briefly explain its contents. The first column has the number of the complex orbit as in Table 10 of \cite{Antotrad}.
We have followed the numbering of that table, except that our 83, 84, 85 is 85, 83, 84 respectively in \cite{Antotrad}. The reason for this change is the following. Consider the map 
$\varphi$ from Section \ref{sec:outer}. Let $\gamma_1,\ldots,\gamma_7$ be the simple roots of the root system of $\ssl(8,\C)$ with respect to the standard Cartan subalgebra consisting of
diagonal matrices. Let $e\in \g_1$ be nilpotent lying in a homogeneous $\ssl_2$-triple $(h,e,f)$. We view $h$ as element of $\ssl(8,\C)\cong \g_0$. 
After replacing $e$ by a $G_0$-conjugate we may assume that $h$ is diagonal (that is, it lies in the Cartan subalgebra that we use) such that $\gamma_i(h)\geq 0$. The 7-tuple 
$(\gamma_1(h),\ldots,\gamma_7(h))$ is called the {\em characteristic} of the orbit of $e$. The characteristic uniquely determines the nilpotent orbit. 
Now $\varphi(e)$ lies in the homogeneous $\ssl_2$-triple $(\varphi(h),\varphi(e),\varphi(f))$.
From the explicit description of $\varphi$ it easily follows that $(\gamma_1(\varphi(h)),\ldots,\gamma_7(\varphi(h))=(\gamma_7(h),\gamma_6(h),\ldots,\gamma_1(h))$. In other words, the
characteristic of the orbit of $\varphi(e)$ is the reverse of the characteristic of the orbit of $e$. If $e$ and $\varphi(e)$ lie in different orbits (in other words, the characteristic is not
invariant under reversion) then $\varphi$ also maps the real $\G_0(\R)$-orbits contained in the $\G_0$-orbit of $e$ to the real $\G_0(\R)$-orbits contained in the $\G_0$-orbit of $\varphi(e)$.
In this case we just list one of the two orbits in Table \ref{tabn}. (And we note that the number of nonzero complex nilpotent orbits is 94, so the last orbit in the table is also followed 
by its image under $\varphi$.) In Table 10 of \cite{Antotrad} the reverse of a characteristic is always listed immediately after it. So, for example,
orbit number 11 is the image under $\varphi$ of orbit number 10 (and in our table we do not have orbit number 11). The only exception is orbit number 82 whose image (with the reverse
characteristic) is orbit number 85. For this reason, we have changed the numbering, so as to have the reverse of orbit 82 immediately below it as orbit 83.

We also consider the action of $\nu$ on the nilpotent orbits. In Section
\ref{sec:nilp} we show that $\nu$ maps a complex nilpotent orbit to itself.
However, it can nontrivially permute the real nilpotent orbits contained in
a given complex nilpotent orbit. If this happens then of each pair
of $\nu$-conjugates we just list one element. We indicate this situation by
an asterisk, that is, if a representative $e$ of a nilpotent orbit in Table
\ref{tabn} is followed by a $*$ then $\nu(e)$ is a representative of a
second real nilpotent $\SL(8,\R)$-orbit. In Section \ref{sec:nilp} we also
describe how to determine the permutation that $\nu$ induces on the nilpotent orbits.

In the second column of the table we give on row $k$ representatives of the real orbits contained in the complex orbit number $k$. This always starts with the same representative as
in \cite{Antotrad}. The  third column then lists the isomorphism type of the real stabilizer $\z_{g_0^\R}(h,e,f) = \{ x\in \g_0^\R \mid [x,h]=[x,e]=[x,f]=0\}$. This stabilizer is reductive,
so it is the sum of a semisimple subalgebra and a toral centre. For the centre we use the following notation: $\mathfrak{t}(k)$ denotes a $k$-dimensional Lie algebra whose elements
are commuting semisimple elements with real eigenvalues, and $\mathfrak{u}(k)$ denotes the same, with the difference that the eigenvalues are all purely imaginary. 
The last column has the isomorphism type of the component group $Z_{\wG_0}(h,e,f)/Z_{\wG_0}(h,e,f)^\circ$. Here $C_k$ indicates the cyclic group of order $k$.

\begin{longtable}{|c|l|l|c|}
\caption{Nilpotent orbit representatives}\label{tabn}\\
\hline
N & Real representative & $\mathfrak{z}_{\g_0^\R}(h,e,f)$ & K \\ \hhline{|=|=|=|=|}
1   &   \makecell[tl]{$e_{1234}$} &  \makecell[tl]{2$\mathfrak{sl}(4,\mathbb{R})$} & \text{  }\{1\}\text{  } \\ \hline

2    &   \makecell[tl]{$e_{1234}+e_{1256}$} &  \makecell[tl]{2$\mathfrak{sl}(2,\mathbb{R})+\mathfrak{s0}(2,3)+\mathfrak{t}(1)$} & \{1\} \\ \hline

3    &   \makecell[tl]{$e_{1234}+e_{1256}+e_{1278}\text{ }^*$}  &  \makecell[tl]{$\mathfrak{sl}(2,\mathbb{R})+\mathfrak{sp}(3,\mathbb{R})$} & $C_2$ \\ \hline

5    &   \makecell[tl]{$e_{1235}+e_{1246}+e_{1347}$} &  \makecell[tl]{$\mathfrak{sl}(3,\mathbb{R})+\mathfrak{t}(2)$} & \{1\} \\ \hline

6    &   \makecell[tl]{$e_{1235}+e_{1246}+e_{1347}+e_{2348}\text{ }^*$} &  \makecell[tl]{$\mathfrak{sl}(4,\mathbb{R})$} & $C_2$ \\ \hline

7    &   \makecell[tl]{$e_{1345}+e_{1236}+e_{1247}+e_{1258}\text{ }^*$} &  \makecell[tl]{$\mathfrak{sl}(3,\mathbb{R})+\mathfrak{t}(1)$} & $C_2$\\ \hline
  
9    &   \makecell[tl]{$e_{1234}+e_{1567}$\\ \text{ } \\$-\frac{1}{2}e_{1234}-\frac{1}{2}e_{1236}+\frac{1}{2}e_{1247}-\frac{1}{2}e_{1267}$\\$-\frac{1}{2}e_{1345}-\frac{1}{2}e_{1356}+\frac{1}{2}e_{1457}-\frac{1}{2}e_{1567}$} &  \makecell[tl]{2$\mathfrak{sl}(3,\mathbb{R})+\mathfrak{t}(1)$\\ \text{ } \\$\mathfrak{sl}(3,\mathbb{C})+\mathfrak{t}(1)$} & $C_2$ \\ \hline

 10    &  \makecell[tl]{ $e_{1245}+e_{1367}+e_{1238}$\\ \text{ }\\ $-e_{1238}-\frac{1}{2}e_{1245}-\frac{1}{2}e_{1247}+\frac{1}{2}e_{1256}$\\$+\frac{1}{2}e_{1267}+\frac{1}{2}e_{1345}-\frac{1}{2}e_{1347}+\frac{1}{2}e_{1356}$\\$-\frac{1}{2}e_{1367}\text{ }^*$} 
& \makecell[tl]{2$\mathfrak{sl}(2,\mathbb{R})+\mathfrak{t}(2)$\\ \text{ } \\$\mathfrak{sl}(2,\mathbb{C})+\mathfrak{u}(1)+\mathfrak{t}(1)$} & $C_2$\\ \hline
 
 12    &   \makecell[tl]{$e_{1345}+e_{2346}+e_{1256}+e_{1237}+e_{1248}\text{ }^*$\\ \text{ } \\$-e_{1237}-e_{1248}-e_{1256}-e_{1345}-e_{2346}\text{ }^*$} &  \makecell[tl]{2$\mathfrak{sl}(2,\mathbb{R})$\\ \text{ }\\2$\mathfrak{sl}(2,\mathbb{R})$} & $C^2_2$\\
 \hline
 
 13    &   \makecell[tl]{$e_{1246}+e_{1357}+e_{1238}+e_{1458}$\\ \text{ } \\$-e_{1238}-\frac{1}{2}e_{1246}+\frac{1}{2}e_{1247}+\frac{1}{2}e_{1256}$\\$+\frac{1}{2}e_{1257}+\frac{1}{2}e_{1346}+\frac{1}{2}e_{1347}+\frac{1}{2}e_{1356}$\\$-\frac{1}{2}e_{1357}-e_{1458}\text{ }^*$} &
 \makecell[tl]{2$\mathfrak{sl}(2,\mathbb{R})+\mathfrak{t}(1)$\\ \text{ } \\2$\mathfrak{su}(2,\mathbb{R})+\mathfrak{t}(1)$} & $\{1\}$\\ \hline
 
 15    &   \makecell[tl]{$e_{2345}+e_{1246}+e_{1357}+e_{1238}$\\ \text{ } \\$-e_{1238}-\frac{1}{2}e_{1246}-\frac{1}{2}e_{1247}-\frac{1}{2}e_{1256}$\\$+\frac{1}{2}e_{1257}+\frac{1}{2}e_{1346}-\frac{1}{2}e_{1347}-\frac{1}{2}e_{1356}$\\$-\frac{1}{2}e_{1357}-e_{2345}\text{ }^*$\\ \text{ } \\$-e_{1238}-\frac{1}{2}e_{1246}+\frac{1}{2}e_{1247}+\frac{1}{2}e_{1256}$\\$+\frac{1}{2}e_{1257}+\frac{1}{2}e_{1346}+\frac{1}{2}e_{1347}+\frac{1}{2}e_{1356}$\\$-\frac{1}{2}e_{1357}-e_{2345}\text{ }^*$} &  \makecell[tl]{$\mathfrak{t}(3)$\\ \text{ } \\$\mathfrak{u}(2)+\mathfrak{t}(1)$\\ \text{ }\\ \text{ } \\ \text{ } \\$\mathfrak{u}(2)+\mathfrak{t}(1)$} & $C_2$\\ \hline

 16 &  \makecell[tl]{$e_{1246}+e_{1357}+e_{1238}+e_{1458}+e_{1678}\text{ }^*$\\ \text{ } \\$-e_{1238}-\frac{1}{2}e_{1246}+\frac{1}{2}e_{1247}+\frac{1}{2}e_{1256}$\\$+\frac{1}{2}e_{1257}+\frac{1}{2}e_{1346}+\frac{1}{2}e_{1347}+\frac{1}{2}e_{1356}$\\$-\frac{1}{2}e_{1357}-e_{1458}-e_{1678}\text{ }^*$} & \makecell[tl]{$G_{2(2)}$\\ \text{ }\\ \text{ } \\$G$} & $C_4$ \\ \hline
 
 18   &   \makecell[tl]{$e_{2345}+e_{1246}+e_{1357}+e_{1238}+e_{1458}$\\ \text{ } \\$-e_{1238}-\frac{1}{2}e_{1246}+\frac{1}{2}e_{1247}+\frac{1}{2}e_{1256}$\\$+\frac{1}{2}e_{1257}+\frac{1}{2}e_{1346}+\frac{1}{2}e_{1347}+\frac{1}{2}e_{1356}$\\$-\frac{1}{2}e_{1357}-e_{1458}-e_{2345}\text{ }^*$\\ \text{ } \\
$-e_{1238}-2e_{1246}+2e_{1357}-2e_{1458}-\frac{1}{2}e_{2345}$\\ \text{ } \\
$\frac{1}{4}e_{1238}+\frac{1}{8}e_{1246}-4e_{1247}-\frac{1}{8}e_{1256}$\\$-4e_{1257}-e_{1346}-\frac{1}{8}e_{1347}-e_{1356}$\\$+\frac{1}{8}e_{1357}+4e_{1458}+4e_{2345}\text{ }^*$}
&  \makecell[tl]{2$\mathfrak{sl}(2,\mathbb{R})$\\ \text{ }\\ \text{ } \\2$\mathfrak{su}(2)$\\ \text{ }\\  \text{ }\\2$\mathfrak{sl}(2,\mathbb{R})$\\ \text{ }\\ \text{ } \\2$\mathfrak{su}(2)$} & $C_4$\\ \hline

20    &   \makecell[tl]{$e_{1256}+e_{1347}+e_{2348}$} &  \makecell[tl]{$3\mathfrak{sl}(2,\mathbb{R})+\mathfrak{t}(1)$} & \{1\} \\ \hline
 21    &   \makecell[tl]{$e_{2345}+e_{1347}+e_{1567}+e_{1268}$} &  \makecell[tl]{$\mathfrak{sl}(2,\mathbb{R})+\mathfrak{t}(2)$} & $\{1\}$\\ \hline
 22    &   \makecell[tl]{$e_{2456}+e_{2347}+e_{1357}+e_{1348}+e_{1268}$} &  \makecell[tl]{$\mathfrak{t}(2)$} & \{1\} \\ \hline
 
 23    &   \makecell[tl]{$e_{3456}+e_{1347}+e_{1257}+e_{2348}+e_{1268}\text{ }^*$} &  \makecell[tl]{2$\mathfrak{sl}(2,\mathbb{R})$} & $C_2$\\ \hline

 25    &   \makecell[tl]{$e_{2456}+e_{2347}+e_{1457}+e_{1367}+e_{1358}+e_{1268}$} &  \makecell[tl]{$\mathfrak{t}(1)$} & $\{1\}$\\ \hline

 26    &   \makecell[tl]{$e_{2456}+e_{2357}+e_{1457}+e_{1367}$\\$+e_{2348}+e_{1358}+e_{1268}\text{ }^*$} & & $C_8$ \\ 
 \hline
 
 27    &   \makecell[tl]{$e_{1345}+e_{2346}+e_{1257}+e_{1268}$} &  \makecell[tl]{$2\mathfrak{sl}(2,\mathbb{R})+\mathfrak{t}(1)$} & \{1\} \\ \hline
 
 28    &   \makecell[tl]{$e_{2345}+e_{1347}+e_{1267}+e_{1258}+e_{1368}$\\ \text{ } \\$\frac{1}{2}e_{1247}-\frac{1}{2}e_{1248}-\frac{1}{2}e_{1257}-\frac{1}{2}e_{1258}$\\$-e_{1267}-\frac{1}{2}e_{1347}-\frac{1}{2}e_{1348}-\frac{1}{2}e_{1357}$\\$+\frac{1}{2}e_{1358}-e_{1368}-e_{2345}$\\ \text{ } \\$-\frac{1}{2}e_{1247}-\frac{1}{2}e_{1248}+\frac{1}{2}e_{1257}-\frac{1}{2}e_{1258}$\\$-e_{1267}-\frac{1}{2}e_{1347}+\frac{1}{2}e_{1348}-\frac{1}{2}e_{1357}$\\$-\frac{1}{2}e_{1358}-e_{1368}-e_{2345}$}  &  \makecell[tl]{$\mathfrak{t}(2)$\\ \text{ }\\ $\mathfrak{u}(1)+\mathfrak{t}(1)$\\ \text{ } \\
 \text{ } \\  \text{ } \\$\mathfrak{u}(1)+\mathfrak{t}(1)$} &
 $C_2$\\ \hline
 
 30    &   \makecell[tl]{$e_{3456}+e_{1247}+e_{1357}+e_{2358}$\\$+e_{1268}+e_{1368}+e_{2368}\text{ }^*$\\ \text{ } \\$-e_{1247}-e_{1268}-e_{1357}-e_{1368}$\\$-e_{2358}+\frac{1}{2}e_{2367}-e_{2368}-e_{3456}\text{ }^*$}& & $C_4\times S_3$ \\ \hline

 32    &   \makecell[tl]{$e_{2356}+e_{2347}+e_{1457}+e_{1367}+e_{1248}+e_{1358}$} &  \makecell[tl]{$\mathfrak{t}(1)$} & $\{1\}$\\ \hline
 
 33    &  \makecell[tl]{$e_{1356}+e_{2456}+e_{1347}+e_{1257}+e_{2348}+e_{1268}\text{ }^*$}
 &  \makecell[tl]{$\mathfrak{sl}(2,\mathbb{R})$} & $C_4$\\ \hline
 
 34    &   \makecell[tl]{$e_{1256}+e_{3456}+e_{1347}+e_{2348}\text{ }^*$} &
 \makecell[tl]{$3\mathfrak{sl}(2,\mathbb{R})$} & $C_2$\\ \hline
 
 36    &   \makecell[tl]{$e_{1256}+e_{2347}+e_{1357}+e_{1348}$} &  \makecell[tl]{$\mathfrak{t}(3)$} & $\{1\}$\\ \hline
 
 37    &   \makecell[tl]{$e_{2345}+e_{1357}+e_{1467}+e_{1348}+e_{1268}$} &  \makecell[tl]{$\mathfrak{t}(2)$} & $\{1\}$\\ \hline
 
 39    &   \makecell[tl]{$e_{2456}+e_{2347}+e_{1357}+e_{1467}+e_{1348}+e_{1268}\text{ }^*$} &  \makecell[tl]{$\mathfrak{t}(1)$} & $C_2$\\ \hline

 40    &   \makecell[tl]{$e_{2356}+e_{1456}+e_{2347}+e_{1357}+e_{1348}+e_{1278}$\\ \text{ } \\$-e_{1278}-e_{1348}+e_{1357}-e_{1456}-e_{2347}-e_{2356}$} &  \makecell[tl]{$\mathfrak{t}(1)$\\ \text{ }\\ $\mathfrak{t}(1)$} & $C_2$\\ \hline
 
 42    &   \makecell[tl]{$e_{1456}+e_{2347}+e_{1267}+e_{1358}+e_{2358}+e_{1268}$\\ \text{ } \\$-e_{1267}-e_{1268}-\frac{1}{2}e_{1347}-\frac{1}{2}e_{1348}+\frac{1}{2}e_{1357}$\\ $-\frac{1}{2}e_{1358}-e_{1456}-e_{2347}-e_{2358}$} &  \makecell[tl]{$\mathfrak{t}(1)$\\ \text{ } \\ $\mathfrak{t}(1)$} & $C_2$ \\ \hline
 
 43    &   \makecell[tl]{$e_{2345}+e_{1356}+e_{1347}+e_{1267}+e_{1248}$} &  \makecell[tl]{$\mathfrak{t}(2)$} & $\{1\}$\\ \hline
 
 44    &   \makecell[tl]{$e_{1246}+e_{1357}+e_{2348}+e_{2358}$\\ \text{ } \\
   $-e_{1246}-\frac{1}{2}e_{1256}+\frac{1}{2}e_{1258}-e_{1357}-e_{2348}$\\$-\frac{1}{2}e_{2356}-\frac{1}{2}e_{2358}$} &  \makecell[tl]{$\mathfrak{t}(3)$\\ \text{ } \\$\mathfrak{t}(3)$} & $S_3$\\ \hline
 
 45    &  \makecell[tl]{$e_{1258}+e_{1267}+e_{1347}+e_{1368}+e_{1456}+e_{2345}\text{ }^*$\\ \text{ } \\$\frac{1}{2}e_{1247}-\frac{1}{2}e_{1248}-\frac{1}{2}e_{1257}-\frac{1}{2}e_{1258}$\\$-e_{1267}-\frac{1}{2}e_{1347}-\frac{1}{2}e_{1348}-\frac{1}{2}e_{1357}$\\$+\frac{1}{2}e_{1358}-e_{1368}-e_{1456}-e_{2345}\text{ }^*$\\ \text{ } \\$-\frac{1}{2}e_{1247}-\frac{1}{2}e_{1248}+\frac{1}{2}e_{1257}-\frac{1}{2}e_{1258}$\\$-e_{1267}-\frac{1}{2}e_{1347}+\frac{1}{2}e_{1348}-\frac{1}{2}e_{1357}$\\$-\frac{1}{2}e_{1358}-e_{1368}-e_{1456}-e_{2345}\text{ }^*$}
 & \makecell[tl]{$\mathfrak{t}(1)$\\ \text{ }\\ $\mathfrak{u}(1)$\\ \text{ } \\ \text{ }\\ \text{ } \\$\mathfrak{u}(1)$}
 & $C_4\times C_2$ \\ \hline

 47    &   \makecell[tl]{$e_{2345}+e_{1347}+e_{1267}+e_{1258}+e_{1368}+e_{2378}$\\ \text{ } \\$-\frac{1}{4}e_{1247}-\frac{1}{8}e_{1248}+e_{1257}-\frac{1}{2}e_{1258}$\\$-e_{1267}-\frac{1}{2}e_{1347}+\frac{1}{4}e_{1348}
-2e_{1357}$\\$-e_{1358}-e_{1368}-e_{2345}-e_{2378}$\\ \text{ } \\$2e_{1258}-2e_{1267}-2e_{1347}-2e_{1368}-\frac{1}{2}e_{2345}$\\ $-e_{2378}$\\ \text{ } \\$\frac{1}{8}e_{1247}+2e_{1248}-\frac{1}{8}e_{1257}+2e_{1258}$\\$+\frac{1}{4}e_{1267}+2e_{1347}-\frac{1}{8}e_{1348}+
2e_{1357}$\\$+\frac{1}{8}e_{1358}+\frac{1}{4}e_{1368}+4e_{2345}+\frac{1}{4}e_{2378}$} &  \makecell[tl]{$\mathfrak{sl}(2,\mathbb{R})$\\ \text{ }\\$\mathfrak{su}(2)$\\ \text{ }\\ \text{ } \\ \text{ } \\$\mathfrak{sl}(2,\mathbb{R})$\\ \text{ }\\ \text{ } \\$\mathfrak{su}(2)$} & $C_4$\\ \hline
 
 49    &  \makecell[tl]{$e_{2356}+e_{1456}+e_{2347}+e_{1357}$\\$+e_{1267}+e_{1348}+e_{1258}\text{ }^*$\\ \text{ } \\$-2e_{1258}-2e_{1267}+2e_{1348}+4e_{1357}$\\$-2e_{1456}-\frac{1}{2}e_{2347}+\frac{1}{2}e_{2356}\text{ }^*$}& & $C_4\times C_4$ \\ \hline
 
 50    &   \makecell[tl]{$e_{1256}+e_{1357}+e_{1467}+e_{2348}$} &  \makecell[tl]{$\mathfrak{sl}(3,\mathbb{R})+\mathfrak{t}(1)$} & \{1\} \\ \hline
 
 51    &   \makecell[tl]{$e_{2456}+e_{1357}+e_{1467}+e_{2348}+e_{1268}$} &  \makecell[tl]{$\mathfrak{t}(2)$} & \{1\} \\ \hline
 
 52    &   \makecell[tl]{$e_{3456}+e_{2357}+e_{1457}+e_{1367}+e_{2348}+e_{1268}$\\ \text{ } \\$-e_{1268}-e_{1367}-e_{1457}-e_{2348}-e_{2357}-e_{3456}$} &  \makecell[tl]{$\mathfrak{t}(1)$\\ \text{ }\\ $\mathfrak{t}(1)$} & $C_2$\\ \hline

 54    &   \makecell[tl]{$e_{1258}+e_{1267}+e_{1348}+e_{1356}+e_{2348}+e_{2457}\text{ }^*$\\ \text{ } \\$-e_{1258}-e_{1267}-e_{1348}-\frac{1}{2}e_{1356}$\\$+\frac{1}{2}e_{1357}+\frac{1}{2}e_{1456}+\frac{1}{2}e_{1457}-e_{2348}$\\$+\frac{1}{2}e_{2356}+\frac{1}{2}e_{2357}+\frac{1}{2}e_{2456}-\frac{1}{2}e_{2457}\text{ }^*$\\ \text{ } \\$-e_{1258}-e_{1267}-e_{1348}-\frac{1}{2}e_{1356}$\\$-\frac{1}{2}e_{1357}-\frac{1}{2}e_{1456}+\frac{1}{2}e_{1457}-e_{2348}$\\$+\frac{1}{2}e_{2356}-\frac{1}{2}e_{2357}-\frac{1}{2}e_{2456}-\frac{1}{2}e_{2457}\text{ }^*$}
 &  \makecell[tl]{$\mathfrak{t}(1)$\\ \text{ }\\ $\mathfrak{u}(1)$\\ \text{ } \\ \text{ }\\ \text{ }\\ $\mathfrak{u}(1)$}
   & $C_4\times C_2$\\ \hline

 56    &   \makecell[tl]{$e_{1356}+e_{2457}+e_{1267}+e_{3467}+e_{1348}+e_{2348}$\\ \text{ } \\$-e_{1267}-e_{1348}-\frac{1}{2}e_{1356}+\frac{1}{2}e_{1357}$\\$+\frac{1}{2}e_{1456}+\frac{1}{2}e_{1457}-e_{2348}+\frac{1}{2}e_{2356}$\\$+\frac{1}{2}e_{2357}+\frac{1}{2}e_{2456}-\frac{1}{2}e_{2457}-e_{3467}$\\ \text{ } \\$2e_{1267}+2e_{1348}+2e_{1356}+\frac{1}{2}e_{2348}$\\$+\frac{1}{2}e_{2457}-e_{3467}$\\ \text{ } \\$4e_{1267}+4e_{1348}+\frac{1}{8}e_{1356}-4e_{1357}$\\$-e_{1456}-\frac{1}{8}e_{1457}+4e_{2348}-\frac{1}{8}e_{2356}$\\$-4e_{2357}-e_{2456}+\frac{1}{8}e_{2457}+\frac{1}{4}e_{3467}$} &  \makecell[tl]{$\mathfrak{sl}(2,\mathbb{R})$\\ \text{ }\\ $\mathfrak{su}(2)$\\ \text{ }\\ \text{ } \\ \text{ } \\$\mathfrak{sl}(2,\mathbb{R})$\\ \text{ }\\ \text{ } \\$\mathfrak{su}(2)$} & $C_4$\\ \hline

 58    &   \makecell[tl]{$e_{2356}+e_{1456}+e_{1257}+e_{1367}+e_{2348}$\\ \text{ } \\$-e_{1257}-e_{1367}-e_{1456}-e_{2348}-e_{2356}$} &  \makecell[tl]{$\mathfrak{sl}(2,\mathbb{R})+\mathfrak{t}(1)$\\ \text{ } \\$\mathfrak{sl}(2,\mathbb{R})+\mathfrak{t}(1)$} & $C_2$\\ \hline

 60    &   \makecell[tl]{$e_{2356}+e_{1456}+e_{1357}+e_{1267}+e_{2348}+e_{1258}\text{ }^*$\\ \text{ } \\$-e_{1258}-e_{1267}-e_{1357}-e_{1456}-e_{2348}-e_{2356}\text{ }^*$} &  \makecell[tl]{$\mathfrak{t}(1)$\\ \text{ }\\ $\mathfrak{t}(1)$} & $C^2_2$\\ \hline
 
 61    &   \makecell[tl]{$e_{1356}+e_{2457}+e_{1267}+e_{1348}+e_{2348}$\\ \text{ } \\$-e_{1267}-e_{1348}-\frac{1}{2}e_{1356}-\frac{1}{2}e_{1357}$\\$-\frac{1}{2}e_{1456}+\frac{1}{2}e_{1457}-e_{2348}+\frac{1}{2}e_{2356}$\\$-\frac{1}{2}e_{2357}-\frac{1}{2}e_{2456}-\frac{1}{2}e_{2457}$\\ \text{ } \\$-e_{1267}-e_{1348}-\frac{1}{2}e_{1356}+\frac{1}{2}e_{1357}$\\$+\frac{1}{2}e_{1456}+\frac{1}{2}e_{1457}-e_{2348}+\frac{1}{2}e_{2356}$\\$+\frac{1}{2}e_{2357}+\frac{1}{2}e_{2456}-\frac{1}{2}e_{2457}$} &  \makecell[tl]{$\mathfrak{t}(2)$\\ \text{ }\\ $\mathfrak{u}(1)+\mathfrak{t}(1)$\\ \text{ }\\ \text{ } \\ \text{ } \\$\mathfrak{u}(1)+\mathfrak{t}(1)$} & $C_2$\\ \hline

 63    &   \makecell[tl]{$e_{2356}+e_{1457}+e_{1367}+e_{2348}+e_{1358}+e_{1268}$\\ \text{ } \\$2e_{1268}-e_{1358}+2e_{1367}+2e_{1457}$\\$+\frac{1}{2}e_{2348}+\frac{1}{2}e_{2356}$} &  \makecell[tl]{$\mathfrak{t}(1)$\\ \text{ }\\ $\mathfrak{t}(1)$}
 & $C_4$\\ \hline
 
 65    &   \makecell[tl]{$e_{3456}+e_{1357}+e_{2467}+e_{1348}$\\$+e_{2348}+e_{1258}+e_{126
8}$\\ \text{ } \\$-e_{1258}-e_{1268}-e_{1348}-\frac{1}{2}e_{1357}$\\$+\frac{1}{2}e_{1367}+\frac{1}{2}e_{1457}+\frac{1}{2}e_{146
7}-e_{2348}$\\$+\frac{1}{2}e_{2357}+\frac{1}{2}e_{2367}+\frac{1}{2}e_{2457}-\frac{1}{2}e_{2467}$\\$-e_{3456}$\\ \text{ } \\$4e_{1258}+4e_{1268}+4e_{1348}+\frac{1}{8}e_{1357}$\\$-2e_{1367}-2e_{1457}-\frac{1}{8}e_{1467}+4e_{
     2348}$\\$-\frac{1}{8}e_{2357}-2e_{2367}-2e_{2457}+\frac{1}{8}e_{2467}$\\$+\frac{1}{4}e_{3456}$\\ \text{ } \\$-4e_{1258}-4e_{1268}-4e_{1348}-4e_{1357}$\\$-\frac{1}{4}e_{2348}-\frac{1}{4}e_{2467}-\frac{1}{4}e_{3456}$} & & $C_4\times C_2$ \\ \hline
 
 67    &   \makecell[tl]{$e_{2456}+e_{2357}+e_{1457}+e_{1367}$\\$+e_{2348}+e_{1358}+e_{1278}\text{ }^*$\\ \text{ } \\$-4e_{1278}-4e_{1358}-4e_{1367}-4e_{1457}$\\$-\frac{1}{4}e_{2348}-\frac{1}{4}e_{2357}-\frac{1}{4}e_{2456}\text{ }^*$} & & $C_4\times C_2$  \\ \hline
 
69    &   \makecell[tl]{$e_{2456}+e_{2357}+e_{1467}+e_{2348}+e_{1358}+e_{1268}$} &  \makecell[tl]{$\mathfrak{t}(1)$} & $\{1\}$\\ \hline

70    &   \makecell[tl]{$e_{2456}+e_{1457}+e_{1367}+e_{2348}+e_{1258}+e_{1358}$\\ \text{ } \\$-e_{1258}-e_{1358}-e_{1367}-e_{1457}-e_{2348}-e_{2456}$} &  \makecell[tl]{$\mathfrak{t}(1)$\\ \text{ }\\ $\mathfrak{t}(1)$} & $C_2$\\ \hline

72    &   \makecell[tl]{$e_{2346}+e_{1456}+e_{1357}+e_{1267}+e_{1248}+e_{1258}\text{ }^*$\\ \text{ } \\$-e_{1248}-e_{1258}-e_{1267}-e_{1357}-e_{1456}-e_{2346}\text{ }^*$} &  \makecell[tl]{$\mathfrak{t}(1)$\\ \text{ }\\ $\mathfrak{t}(1)$} & $C^2_2$\\ \hline

73    &   \makecell[tl]{$e_{2346}+e_{1456}+e_{1257}+e_{1348}+e_{1358}\text{ }^*$\\ \text{ } \\ $-\frac{1}{2}e_{1247}+\frac{1}{2}e_{1248}-e_{1257}-\frac{1}{2}e_{1347}$\\$-\frac{1}{2}e_{1348}-e_{1358}-e_{1456}-e_{2346}\text{ }^*$}
 &  \makecell[tl]{$\mathfrak{t}(2)$\\ \text{ }\\ $\mathfrak{u}(1)+\mathfrak{t}(1)$} & $C_2^2$\\ \hline

75    &   \makecell[tl]{$e_{2456}+e_{2357}+e_{1457}+e_{1367}$\\$+e_{2348}+e_{1258}+e_{1358}\text{ }^*$\\ \text{ } \\$-e_{1258}-e_{1358}-e_{1367}-e_{1457}$\\$-e_{2348}+e_{2357}-e_{2456}\text{ }^*$}
& & $C_4\times C_2$  \\ \hline

 77    &   \makecell[tl]{$e_{2456}+e_{1357}+e_{1267}+e_{2348}$\\$+e_{1458}+e_{1368}+e_{146
8}$\\ \text{ } \\$-2e_{1258}-e_{1267}-e_{1357}-e_{1368}$\\$-e_{1458}-e_{1468}-e_{2348}-e_{2456}$\\ \text{ } \\$-4e_{1267}-4e_{1357}-4e_{1368}-4e_{1458}$\\$-4e_{1468}-\frac{1}{4}e_{2348}-\frac{1}{4}e_{2456}$\\ \text{ } \\$8e_{1258}+4e_{1267}+4e_{1357}+4e_{1368}$\\$+4e_{1458}+4e_{1468}+\frac{1}{4}e_{2348}+\frac{1}{4}e_{
2456}$} & & $C_4\times C_2$  \\ \hline

 79    &   \makecell[tl]{$e_{2456}+e_{2347}+e_{1567}+e_{1348}+e_{1358}+e_{1268}$} &  \makecell[tl]{$\mathfrak{t}(1)$} & $\{1\}$\\ \hline
 
 80    &   \makecell[tl]{$e_{2456}+e_{2347}+e_{1367}+e_{1348}+e_{1258}+e_{1358}\text{ }^*$} &  \makecell[tl]{$\mathfrak{t}(1)$} & $C_2$\\ \hline

 82    &   \makecell[tl]{$e_{2456}+e_{2347}+e_{1457}+e_{1367}$\\$+e_{1348}+e_{1258}+e_{1358}\text{ }^*$\\ \text{ } \\$-4e_{1258}-4e_{1348}-4e_{1358}-4e_{1367}$\\$-4e_{1457}-\frac{1}{4}e_{2347}-\frac{1}{4}e_{2456}\text{ }^*$} & & $C_4\times C_2$  \\ \hline

 84    &   \makecell[tl]{$e_{2456}+e_{2357}+e_{1467}+e_{2348}$\\$+e_{1358}+e_{1368}+e_{1278}$\\ \text{ } \\$-4e_{1278}-4e_{1358}-4e_{1368}-4e_{1467}$\\$-\frac{1}{4}e_{2348}-\frac{1}{4}e_{2357}-\frac{1}{4}e_{2456}$} & & $C_4$  \\ \hline
 86    &   \makecell[tl]{$e_{2356}+e_{2457}+e_{1367}+e_{2348}$\\$+e_{1458}+e_{1268}+e_{127
8}$\\ \text{ } \\$-4e_{1268}-4e_{1278}-4e_{1367}-4e_{1458}$\\$-\frac{1}{4}e_{2348}-\frac{1}{4}e_{2356}-\frac{1}{4}e_{2457}$} & & $C_4$   \\ \hline

 88    &   \makecell[tl]{$e_{3456}+e_{2457}+e_{2367}+e_{1467}$\\$+e_{2348}+e_{1358}+e_{126
     8}$\\ \text{ } \\$-4e_{1268}-4e_{1358}-4e_{1467}-\frac{1}{4}e_{2348}$\\$-\frac{1}{4}e_{2367}-\frac{1}{4}e_{2457}-\frac{1}{4}e_{3456}$} & & $C_4$   \\ \hline
 
 90    &   \makecell[tl]{$e_{1356}+e_{2456}+e_{2347}+e_{1257}+e_{1248}\text{ }^*$} &  \makecell[tl]{$\mathfrak{t}(2)$} & $C_2$\\ \hline

 92    &   \makecell[tl]{$e_{1256}+e_{2347}+e_{1357}+e_{1467}+e_{1348}\text{ }^*$} &  \makecell[tl]{$\mathfrak{sl}(2,\mathbb{R})+\mathfrak{t}(1)$} & $C_2$\\ \hline
 
 93    &   \makecell[tl]{$e_{2345}+e_{1356}+e_{1457}+e_{1267}+e_{1348}+e_{1258}\text{ }^*$\\ \text{ } \\$-e_{1258}-e_{1267}-e_{1348}+e_{1356}-e_{1457}-e_{2345}\text{ }^*$}
 &  \makecell[tl]{$\mathfrak{sl}(2,\mathbb{R})$\\ \text{ }\\$\mathfrak{sl}(2,\mathbb{R})$} & $C_4\times C_2$\\ \hline
\end{longtable}
\vspace{1cm}

Table \ref{tab1} shows some data concerning the complex and real semisimple orbits. As noted above, the Weyl group $W$ has thirty-two conjugacy classes of 
subgroups that are stabilizers of elements. We denote these by $H_1,\ldots,H_{32}$. Here $H_1=W$ and $H_{32}=1$. 
The first column of Table \ref{tab1} has the index $i$. The second column has a basis of $\h_{H_i}$. In some cases these bases are different from 
the ones found by Antonyan (in other words, we work with different subgroups of $W$). The reason is the following. Write $\hat \g_0 = \ssl(8,\C)$ and for $p\in \h$
set $\z_{\hat\g_0} (p) = \{ x\in \hat\g_0 \mid x\cdot p=0\}$. Then with the bases in Table \ref{tab1} we have that $\z_{\hat\g_0}(p)$ has a Cartan subalgebra 
consisting of diagonal matrices, so that we do not have to diagonalize when working with it. This simplifies a lot of computations later on.

The fifth column of Table \ref{tab1} has a description of the isomorphism type of the component group of the stabilizer $Z_{\G_0}(p)$ for $p\in \h_{H_i}^\circ$. Here the notation $C_k$ indicates the
cyclic group of order $k$. The numbers in the last column have the following meaning. The space $\h_{H_i}$ has a finite number of subsets
$U_1,\ldots, U_r$ (where each $U_j$ is the real span of some basis of $\h_{H_i}$) such that the set of $p\in \h_{H_i}^\circ$ with the property that $p$ is 
$\G_0$-conjugate to a $q\in \g_1$ which is real (i.e., lies in $\g_1^\R$), is exactly the union of $U_1^\circ,\ldots,U_r^\circ$, where $U_i^\circ$ is an open set in $U_i$ (see Theorem \ref{th1}, and the
explanation below it). 
Let $p\in U_j^\circ$ be conjugate to $q\in \g_1^\R$. Then the $\G_0$-orbit of $q$ contains $k_j$ real $\G_0(\R)$-orbits (which we have determined using Galois cohomology).
The last column of Table \ref{tab1} has the $r$-tuple $(k_1,\ldots, k_r)$. This means that the single class of semisimple $\G_0$-orbits $\h_{H_i}^\circ$ contains 
$k_1+\cdots +k_r$ classes of real $\G_0(\R)$-orbits.
    
\begin{longtable}{|c|l|l|l|l|l|}
\caption{Classes of semisimple elements}\label{tab1}\\
\hline
i & basis of $\h_{H_i}$ & $\mathfrak{z}_{\hat{\mathfrak{g}}_0}(p)$ & $|H_i|$ & K &\\ \hhline{|=|=|=|=|=|=|}
 1  & -  &  $A_7$ & 2 903 040 & $\{1\}$ & - \\ \hline
 2  & $p_1+p_2+p_4$  &  $C_4$ & 51 840 & $C_2$ & $(2,2)$ \\ \hline
 3  & $p_1$  &  $2A_3$ & 23 040 & $C_2$ & $(2,2)$\\ \hline
 4  & $p_1+p_2$  &  $2B_2+\mathfrak{t}(1)$ & 3 840 & $C_2$ & $(3,3)$ \\ \hline
 5  & $-2p_1-p_2+p_3+p_5$  &  $B_3$ & 5 040 & $C_2$ & $(4,4)$\\ \hline
 6  & $\frac{1}{2}p_1-\frac{1}{2}p_2+\frac{1}{2}p_3+p_5+\frac{1}{2}p_6+p_7$  &  $A_3+\mathfrak{t}(1)$ & 1 440 & $C_2$ & $(4,4)$ \\ \hline
 7  & $3p_1+2p_2-p_3+p_7$  &  $B_2+A_1$ & 720 & $C_2$ & $(6,6)$\\ \hline
 8  & $-2p_1-p_2+p_3$  &  $3A_1+\mathfrak{t}(1)$ & 288 & $C_2$ & $(7,7)$\\ \hline
 9  & $p_1+p_2,p_4$  &  $2B_2$ & 1 920 & $C^2_2$ & $(4,1,1,4)$\\ \hline
 10 & $p_1,p_2$  &  $4A_1+\mathfrak{t}(1)$ & 384 & $C^2_2$ & $(6,3,1,6)$ \\ \hline
 11 & $p_1+p_2+p_4,p_3+p_5-p_6+p_7$  & $A_3$ & 720 & $C^2_2$ & $(6,4,4,6)$\\ \hline
 12 & $p_4,-p_1+p_3+p_5$  &  $A_3$ & 720 & $C^2_2$ & $(3,1,1,3)$ \\ \hline
 13 & $\makecell[tl]{\frac{1}{2}p_1-\frac{1}{2}p_2+\frac{1}{2}p_4+p_5,\\-p_1-p_3+p_7}$ &  $B_2+\mathfrak{t}(1)$ & 240 & $C^2_2$ & $(8,8)$ \\ \hline
 14 & $\makecell[tl]{-2p_1+p_2+2p_3+2p_4+p_6+p_7,\\p_1-p_3-p_4+p_5}$  &  $3A_1$ & 144 & $C^2_2$  & $(10,3,3,10)$ \\ \hline
 15 & $p_6,-2p_1+p_2-p_3+p_4+p_7$  &  $2A_1+\mathfrak{t}(2)$ & 96 & $C^2_2$ & $(5,5,5,5)$\\ \hline
 16 & $-2p_1-p_2+p_3,p_1+p_2+p_4$  &  $2A_1+\mathfrak{t}(1)$ & 72 & $C^2_2$ & $(4,12,12,4)$ \\ \hline
 17 & $p_1+p_2,-p_1+p_3$  &  $A_1+\mathfrak{t}(3)$ & 48 & $C^2_2$ & $(1,13,1,13)$ \\ \hline
 18 & $p_1,p_2,p_4$  &  $4A_1$ & 192 & $C^3_2$ & $(8,2,2,2,2,8)$\\ \hline
 19 & $p_1+p_2,p_4,p_1-p_3+p_7$  &  $B_2$ & 120 & $C^3_2$ & $(10,2,2,10)$ \\ \hline
 20 & $p_1-p_2+p_3,p_1-p_2+p_4,p_2+p_7$  &  $2A_1+\mathfrak{t}(1)$ & 48 & $C^3_2$ & $(4,14,3,6,6,3,14,4)$\\ \hline
 21 & $p_4,-p_2+p_5+p_6,p_2-p_3+p_7$  &  $2A_1+\mathfrak{t}(1)$ & 48 & $C^3_2$ & $(13,2,3,3,2,13)$\\ \hline
 22 & $-2p_1-p_2+p_3,p_1+p_2+p_4,p_5$  &  $2A_1$ & 36 & $C^3_2$ & $(6,2,16,4,4,2,16,6)$\\ \hline
 23 & $p_1+p_2,-p_1+p_3,p_4$  &  $A_1+\mathfrak{t}(2)$ & 24 & $C^3_2$ & $(18,5,2,2,2,2,5,18)$\\ \hline
 24 & $p_2,p_3,p_6$  &  $\mathfrak{t}(4)$ & 16 & $C^3_2$ & $(20,7,2,7,2,20)$\\ \hline
 25 & $\makecell[tl]{p_1+p_2,p_4,-p_1-p_3+p_5,\\p_1+p_3+p_7}$  &  $2A_1$ & 24 & $C^4_2$  & $\makecell[tl]{(4,18,4,1,4,5,1,5,4,\\4,4,18)}$\\ \hline
 26 & $p_1+p_2,-p_1+p_3,p_4,p_7$  &  $A_1+\mathfrak{t}(1)$ & 12 & $C^3_2$ & $(8,2,2,2,2,8)$ \\ \hline
 27 & $p_1,p_4,p_6,p_7$  &  $\mathfrak{t}(3)$ & 8  &$C^4_2$ & $(27,7,3,3,1,7,3,27)$ \\ \hline
 28 & $-p_1+p_4,p_5,p_1+p_6,p_1+p_7$  &  $\mathfrak{t}(3)$ & 8 & $C^4_2$ & $\makecell[tl]{(4,28,4,8,4,4,10,4,4,\\10,8,28)}$ \\ \hline
 29 & $p_1+p_2,-p_1+p_3,p_4,p_5,p_7$  &  $A_1$ & 6 &  $C^5_2$ & $(2,2,8,32,2,2,8,32)$ \\ \hline
 30 & $p_1,p_2,p_3,p_4,p_7$  &  $\mathfrak{t}(2)$ & 4 & $C^5_2$  & $\makecell[tl]{(38,10,2,6,2,6,3,6,11,\\11,3,2,6,2,6,10,6,38)}$\\ \hline
 31 & $p_1,p_3,p_4,p_5,p_6,p_7$  &  $\mathfrak{t}(1)$ & 2 & $C^6_2$ & $\makecell[tl]{(52,14,3,4,1,4,4,4,3,\\14,52)}$ \\ \hline
 32 & $p_1,p_2,p_3,p_4,p_5,p_6,p_7$  &  $0$ & 1 & $U_8$ & $\makecell[tl]{(72, 72, 6, 6, 6, 6, 20, 20,\\ 2, 2)}$  \\ \hline

\end{longtable}

\begin{rmk}
The group $U_8$ on line 32 of this table is of order $2^8$. It has a centre of order 4, generated by $\diag(i,\ldots,i)$. The quotient of 
$U_8$ by its centre is isomorphic to $C_2^6$. This is in accordance with \cite[Table 1]{popov}, where the author determines the stabilizer of a
generic element. However, the group that he considers is not $\SL(8,\C)$ but rather its image in $\GL(\bigwedge^4 \C^8)$.
\end{rmk}

From Table \ref{tab1} we see that there are very many classes of semisimple orbits. For this reason we cannot list them all. Table \ref{tab:semsimreps} contains 
representatives of the semisimple orbits that have a stabilizer of large dimension ($\geq 10$) (with the exception of the semisimple orbits with number 18 in 
Table \ref{tab1}, as here the representatives turned out to be rather bulky). Also we have given representatives of the orbits up to the action of the
group generated by $\varphi$ and $\nu$ (Section \ref{sec:outer}). In Table \ref{tab:semsimreps} the first column has the index of the corresponding complex
semisimple orbit (as in Table \ref{tab1}). The second column has the representatives of the real semisimple orbits. As outlined before, these are representatives 
of classes of orbits. They can depend on parameters, and the restrictions on the parameters are also listed. Furthermore, if $u$ is a representative in the table
then the corresponding class is given by $a u$ where $a\in \R\setminus \{0\}$. So we see that a class in the table can depend on up to three parameters. 
All elements of a given class share the same stabilizer in $\G_0(\R)$ and $\ssl(8,\R)$. In the third column we give the isomorphism type of the stabilizer in 
$\ssl(8,\R)$. The last column has the orbit of the element under the group generated by $\varphi$ and $\nu$. If $u$ is the given representative, then 
I means that the orbit is $\eta(u)$ for $\eta \in \{1,\nu,\varphi\nu, \nu\varphi\nu\}$. For II and III this last set is $\{1,\nu\}, \{1\}$ respectively. 
By putting all these orbits together we get representatives of the semisimple $\SL(8,\R)$-orbits. We refer to Remark \ref{rem:phinusemsim} for the methods
we used to determine the permutation of the orbits induced by the maps $\nu$ and $\varphi$.

\begin{longtable}{|l|p{.7\textwidth}|p{.2\textwidth}|l|}
    \caption{representatives of real semisimple orbits with large stabilizer}\label{tab:semsimreps}

\endfirsthead

\endhead

\endfoot

\endlastfoot
    
\hline    
$i$ & representative & stabilizer & orbit \\ \hline
2 & $e_{ 1 2 3 4 }+e_{ 1 3 5 7 }+e_{ 1 3 6 8 }+e_{ 2 4 5 7 }+e_{ 2 4 6 8 }+
 e_{ 5 6 7 8 }$ & $\mathfrak{sp}(4,\R)$ & I  \\ \hline
3 & $(5+3\sqrt{2})e_{ 1234 }-3e_{ 1237 }+3e_{ 1248 }+ 5e_{ 1278 }+ 3e_{ 1346 }-
 5e_{ 1367 }+ 5e_{ 1468 }+ (3+5\sqrt{2})e_{ 1678 }-(3+5\sqrt{2})e_{ 2345 }
 -5e_{ 2357 }+ 5e_{ 2458 }-3e_{ 2578 }+ 5e_{ 3456 }+ 3e_{ 3567 }-3e_{ 4568 }+
 (5+3\sqrt{2})e_{ 5678 }$ & $\ssl(4,\C)$ & II \\ \cline{2-4}
 & $e_{1234 }+e_{5678 }$ & $2\ssl(4,\R)$ & II \\ \hline
4 & $-2e_{1234}-e_{1357}+e_{1368}+e_{1458}+e_{1467}+e_{2358}+e_{2367}+e_{2457}-e_{2468}
 -2e_{5678}$ & $\mathfrak{so}(5,\C)+\mathfrak{u}(1)$ & I \\ \cline{2-4}
 & $e_{1234}+e_{1357}+e_{2468}+e_{5678}$ & $2\mathfrak{so}(2,3)+\mathfrak{t}(1)$ & II\\
 \hline 
5 &  $-2e_{1234}+e_{1256} -e_{1357} +e_{1458} +e_{2367} -e_{2468} 
  +e_{3478} -2e_{5678}$ & $\mathfrak{so}(4,3)$ & I \\  \cline{2-4} 
 & $e_{ 1 2 3 4 }- e_{ 1 2 5 6 }+ e_{ 1 2 7 8 }+ e_{ 1 3 5 7 }+e_{ 1 3 6 8 }
-e_{ 1 4 5 8 }+ e_{ 1 4 6 7 }+e_{ 2 3 5 8 }- e_{ 2 3 6 7 }+ e_{ 2 4 5 7 }+e_{ 2 4 6 8 }+
e_{ 3 4 5 6 }-e_{ 3 4 7 8 }+ e_{ 5 6 7 8 }$ 
& $\so(7)$ & I \\ \hline
6 & $e_{1234}+e_{1256}-2e_{1278} -e_{1357} +2e_{1458}
-e_{1467} -e_{2358} +2e_{2367} -e_{2468} -2e_{3456}
+e_{3478}+e_{5678}$ & $\ssl(4,\R)+\mathfrak{t}(1)$ & II \\ \cline{2-4}
& $-2e_{1234} -e_{1256} +e_{1278} +2e_{1357} -e_{1458}
+e_{1467} +e_{2358} -e_{2367} +2e_{2468} +e_{3456}
-e_{3478} -2e_{5678}$ &  $\ssl(4,\R)+\mathfrak{u}(1)$ & II \\ \cline{2-4}
& $80e_{1234} -48e_{1237} +64e_{1256} -128e_{1278}
+48e_{1345} +80e_{1357} +128e_{1458} -64e_{1467}
-4e_{2358} +8e_{2367} -5e_{2468} +3e_{2678} -8e_{3456}
+4e_{3478} -3e_{4568} +5e_{5678}$ & $\ssl(2,\mathbb{H})+\mathfrak{u}(1)$ & I \\ \hline
7 & $3e_{1234} -e_{1256} -e_{1278} +2e_{1357} +2e_{2468}
-e_{3456} -e_{3478} +3e_{5678}$ & $\ssl(2,\R)+\mathfrak{so}(2,3)$ & I \\
\cline{2-4}
& $-3e_{1234} +e_{1256} +e_{1278} -e_{1357} +e_{1368}
-e_{1458} -e_{1467} -e_{2358} -e_{2367} +e_{2457}
-e_{2468} +e_{3456} +e_{3478} -3e_{5678}$ & $\mathfrak{su}(2)+
\mathfrak{so}(4,1)$ & I \\ \cline{2-4}
 & $-3e_{ 1 2 3 4 }+e_{ 1 2 5 6 }+e_{ 1 2 7 8 }-e_{ 1 3 5 7 }+e_{ 1 3 6 8 }+
e_{ 1 4 5 8 }+e_{ 1 4 6 7 }+e_{ 2 3 5 8 }+e_{ 2 3 6 7 }+e_{ 2 4 5 7 }-e_{ 2 4 6 8 }
+e_{ 3 4 5 6 }+e_{ 3 4 7 8 }-3e_{ 5 6 7 8 }$
& $\mathfrak{su}(2)+\so(5)$ & I \\
\hline
8 & $-2e_{1234}+e_{1256}-e_{1357}-e_{2468}+e_{3478}-2e_{5678}$ &
$3\ssl(2,\R)+\mathfrak{t}(1)$ & II \\
\cline{2-4}
& $e_{1234}+e_{1256} +2e_{1357}+ 3e_{1368} +3e_{1458} +3e_{2367} 
+3e_{2457} +2e_{2468} +e_{3478} +e_{5678}$ &
$\ssl(2,\R)+\ssl(2,\C)+\mathfrak{u}(1)$ & I \\ \cline{2-4}
& $-128e_{1234}+64e_{1256} -17e_{1357} +15e_{1358} +15e_{1367} +17e_{1368} 
  +15e_{1457} +17e_{1458} +17e_{1467} -15e_{1468} -15e_{2357} +17e_{2358} +17
  e_{2367} +15e_{2368} +17e_{2457} +15e_{2458} +15e_{2467} -17e_{2468} +4
  e_{3478} -8e_{5678}$ & $\mathfrak{su}(2)+\ssl(2,\C)+\mathfrak{u}(1)$ & I \\
  \cline{2-4}
  & $e_{1234} -e_{1256} +e_{1278} +e_{1357} +e_{1368} +e_{1467} 
  +e_{2358} +e_{2457} +e_{2468} +e_{3456} -e_{3478} +e_{5678}$ & $3\mathfrak{su}(2)+\mathfrak{t}(1)$ & I \\
  \hline
9 & $e_{1234}+e_{1357}+e_{2468}+e_{5678}+b(
  e_{2457}+e_{1368})$; $b\neq 0,1,-1$ & $2\mathfrak{so}(2,3)$& I \\ \cline{2-4}
  &  $-e_{1357}+e_{1467} +4e_{2358} -4e_{2468}+
b(e_{1278} -4e_{3456})$; $b\neq 0,1,-1$ &  $2\mathfrak{so}(2,3)$ & II \\ \cline{2-4}
  & $-2e_{1234}-(1-b)e_{1357} +(1-b)e_{1368} +(1+b)e_{1458}
  +(1+b) e_{1467} +(1+b) e_{2358}+(1+b) e_{2367}+(1-b) e_{2457}
  -(1-b)e_{2468} -2e_{5678}$; $b\neq 0,1,-1$ & $\mathfrak{so}(5,\C)$ & I \\
  \hline
10 &  $e_{1234}+e_{5678} +b(e_{1357} +e_{2468})$ ; $b\neq 0,-1,1$
  & $4\ssl(4,\R)+\mathfrak{t}(1)$ & II \\ \cline{2-4}
& $-5e_{1234} -3e_{1236} -3e_{1247} +5e_{1267} +3e_{1348} -5e_{1368} 
  -5e_{1478} -3e_{1678} +3e_{2345} -5e_{2356} -5e_{2457} -3e_{2567} +5
  e_{3458} +3e_{3568} +3e_{4578} -5e_{5678}
  +b(8e_{1357} +8e_{2468 })$, $b\neq 0,1,-1$ &
   $2\ssl(2,\C)+\mathfrak{u}(1)$ & I \\ \cline{2-4}
& $-3e_{1356} +5e_{1357} +5e_{1368} +3e_{1378} +5e_{1456} +3e_{1457} 
  +3e_{1468} -5e_{1478} -5e_{2356} +3e_{2357} +3e_{2368} +5e_{2378} +3e_{2456}
  +5e_{2457} +5e_{2468} -3e_{2478}+b(32e_{1234} +2e_{5678})$; $b\neq 0,1,-1$ &
   $2\ssl(2,\C)+\mathfrak{u}(1)$ & I \\ \cline{2-4}
  & $-(1+b)e_{1234}+( 1-b)e_{ 1278} -(1+b)e_{ 1357}-(1-b)e_{ 1458} -(1-b)e_{ 2367}
  -(1+b)e_{ 2468} +(1-b)e_{ 3456} -(1+b)e_{ 5678 }$; $b\neq 0,1,-1$ & $2\ssl(2,\C)+\mathfrak{t}(1)$ & II
  \\ \cline{2-4}
& $5e_{1234} +3e_{1236} +3e_{1247} -5e_{1267} -3e_{1348} +5e_{1368} 
  +5e_{1478} +3e_{1678} -3e_{2345} +5e_{2356} +5e_{2457} +3e_{2567} -5
  e_{3458} -3e_{3568} -3e_{4578} +5e_{5678}
  +b(-8e_{1357} +8e_{2468})$; $b\neq 0,1,-1$ &  $2\ssl(2,\C)+\mathfrak{u}(1)$ & III \\ \cline{2-4}
& $-5e_{1357} -3e_{1358} -3e_{1367} +5e_{1368} -3e_{1457} +5e_{1458} 
  +5e_{1467} +3e_{1468} -3e_{2357} -5e_{2358} -5e_{2367}+3e_{2368} -5
  e_{2457} +3e_{2458} +3e_{2467} +5e_{2468}
  +b(8e_{1234} +8e_{5678})$; $b\neq 0,1,-1$ & $2\ssl(2,\C)+\mathfrak{u}(1)$ & III \\ \cline{2-4}
  &   $e_{1234}+ e_{5678}+b(e_{1357}-e_{2468})$; $b\neq 0,1,-1$ & $4\ssl(4,\R)+\mathfrak{t}(1)$ & III\\
  \cline{2-4}
& $2e_{1234} +4e_{1278} +e_{3456} +2e_{5678}+b(-2e_{1346}+4e_{1678}-e_{2345}+2e_{2578})$;
$b\neq 0,1,-1$   & $2\ssl(2,\R)+\ssl(2,\C)+\mathfrak{t}(1)$ & III \\ \hline   
11 &  $e_{1256}-e_{1278}+e_{1458}+e_{1467}+e_{2358}+e_{2367}- e_{3456}+e_{3478}
  +b(e_{1234}+e_{1357}+e_{1368}+e_{2457}+e_{2468}+e_{5678})$; $b\neq 0, 1, -1$ &
  $\ssl(4,\R)$ & I \\ \cline{2-4}
& $-e_{ 1 2 5 6 }+e_{ 1 3 5 7 }+e_{ 1 4 5 8 }+e_{ 2 3 6 7 }+e_{ 2 4 6 8 }
-e_{ 3 4 7 8 }+b(e_{ 1 2 3 4 }-e_{ 1 2 7 8 }-e_{ 1 3 6 8 }+e_{ 1 4 6 7 }+e_{ 2 3 5 8 }-
e_{ 2 4 5 7 }-e_{ 3 4 5 6 }+e_{ 5 6 7 8 })$, $b\neq 0, 1, -1$ & $\mathfrak{su}(4)$ & I \\
\cline{2-4}

& $(1+b)e_{1357} +(1+b)e_{1368} +e_{1458} +e_{1467} +e_{2358} +e_{2367} 
-(1-b)e_{2457} +(1+b)e_{2468} +b(e_{1234}+e_{5678})$; $b\neq 0,1,-1$ & 
$\mathfrak{su}(2,2)$ & I \\ \cline{2-4}

& $e_{1257}-e_{1268}+e_{1458}-e_{1467}+e_{2358}-e_{2367} -e_{3457} +e_{3468}
+b(e_{1234}+e_{1356}-e_{1378}+e_{2456}-e_{2478}+e_{5678})$; $b\neq 0,1,-1$ &
$\ssl(4,\R)$ & I \\
\cline{2-4}
& $-e_{1236} -e_{1238} +4e_{1245} -4e_{1247} +e_{1257} -4e_{1268}+ 
  4e_{1345} -4e_{1347} -e_{1357} -4e_{1368} +4e_{1456} +4e_{1458} -4e_{1467} +4e_{1478} +4
  e_{1567} +4e_{1578} +e_{2346} +e_{2348} -e_{2356} +e_{2358} 
  -e_{2367} -e_{2378} +e_{2457} +4e_{2468} -e_{2568} +e_{2678} -e_{3457} +4e_{3468} +
  e_{3568} -e_{3678} +4e_{4567} +4e_{4578}+b(-4e_{1235} -4e_{1237} +4e_{1246} -4e_{1248}
  +2e_{1258} +2e_{1267} -4e_{1346} +4e_{1348} -2e_{1356} +2e_{1378} -e_{1568} -e_{1678}
  -4e_{2345} -4e_{2347} -2e_{2456} +2e_{2478} +e_{2567} -e_{2578} +2e_{3458} +2e_{3467} +
  e_{3567} -e_{3578} +e_{4568} +e_{4678})$; $b\neq 0,1,-1$ &
  $\mathfrak{su}(1,3)$ & I \\ \hline
12 & $(1+b) e_{1234} -(1+b)e_{1256} (1-3b)e_{1368} -(1+b)e_{1458}
  -(1+b)e_{2367} (1-3b)e_{2457} -(1+b)e_{3478} (1+b)e_{5678}$; $b\neq 0, 1,-1,
  \tfrac{1}{3}$ & $\ssl(4,\R)$ & II \\ \cline{2-4}
  & $-e_{1234} +e_{1256} +e_{1458} +e_{2367} +e_{3478} -e_{5678}+b(e_{1368}+e_{2457})$;
  $b\neq 0,1,-1,3$ & $\ssl(4,\R)$ & II \\ \cline{2-4}
  & $-16e_{1234}+16e_{1256}+16e_{1458}+e_{2367}+e_{3478}-e_{5678}-b(16e_{1368}+e_{2457})$;
  $b\neq 0,1,-1,3$ & $\ssl(4,\R)$ & II \\ \cline{2-4}
  & $-256e_{1258} -(768-512\sqrt{2}) e_{1267}
  +(768-512\sqrt{2}) e_{1456} -(768-512\sqrt{2})e_{1478} -
  e_{2356} +e_{2378} -e_{3458} -(3+-2\sqrt{2})e_{3467}
  +b(16e_{1256} +48e_{1278} -16e_{1458}
  -(48-32\sqrt{2})e_{1467} +(\tfrac{3}{16}+\tfrac{1}{8}\sqrt{2})e_{2358} 
  +\tfrac{1}{16}e_{2367} -\tfrac{3}{16}e_{3456} -\tfrac{1}{16}e_{3478})$;
  $b\neq 0, \pm(16-16\sqrt{2}), 48-48\sqrt{2}$ & $\ssl(4,\R)$ & III\\ \cline{2-4}

  & $(-\tfrac{17}{64}+\tfrac{3}{16}\sqrt{2})e_{1256}
  -(\tfrac{17}{64}-\tfrac{3}{16}\sqrt{2})e_{1357}
  -(\tfrac{17}{64}-\tfrac{3}{16}\sqrt{2})e_{1467} -e_{2358} -e_{2468} -e_{3478}+b(
  (-\tfrac{3}{8}+\tfrac{1}{4}\sqrt{2}) e_{1234} +(24+16\sqrt{2})
  e_{5678})$; $b\neq 0,\pm \alpha, 3\alpha$, $\alpha= -\tfrac{3}{8}+\tfrac{1}{4}
  \sqrt{2}$ & $\ssl(4,\R)$ & III \\ \hline

13 & $(1+b)e_{1234} +(1+b)e_{1256} -(1-b)e_{1278} +2e_{1458} +2e_{2367} -(1-b)e_{3456}
  +(1+b)e_{3478}+ (1+b)e_{5678}$; $b\neq 0,1,-1,-2$ & $\mathfrak{so}(2,3)+
  \mathfrak{t}(1)$ & I \\ \cline{2-4}
  & $e_{1256} -e_{1368} -2e_{1458} -(1-b)e_{1467} -(1-b)e_{2358} -2e_{2367} -e_{2457} 
    +e_{3478}+b(e_{1278} +e_{1357}+e_{2468}+e_{3456})$; $b\neq 0,1,-1,2$ &
     $\mathfrak{so}(2,3)+  \mathfrak{u}(1)$ & I \\ \cline{2-4}

  & $(1+b) e_{1234} -e_{1357} +e_{1368} +2e_{1458} +2e_{2367} +e_{2457} -e_{2468}+ 
    (1+b)e_{5678}+b(e_{1256}+e_{1278}+e_{3456}+e_{3478})$, $b\neq 0,1,-1,-2$ &
    $\mathfrak{so}(4,1)+  \mathfrak{u}(1)$ & I \\ \cline{2-4}

    & $(1+b)e_{ 1234} +e_{ 1256} +e_{1278} -e_{1357} +e_{1368} +(1+b)e_{1458} +
    (1-b)e_{1467}+(1-b)e_{ 2358} +(1+b)e_{ 2367} +e_{ 2457} -e_{ 2468} 
    +e_{3456}+e_{ 3478} +(1+b)e_{5678}$; $b\neq 0,1,-1,-2$ &
    $\mathfrak{so}(5)+  \mathfrak{u}(1)$ & I \\ \hline
    
19 & $(1+c)e_{1234}+e_{1357}+e_{2468} +(1+c)e_{5678}+b(e_{1368}+e_{2457})
    -c(e_{1256}+e_{1278}+e_{3456} +e_{3478})$; $b\neq 0,\pm 1$, $c\neq 0,-1$, 
    $\varepsilon b+2\delta c \neq 1$ where $\varepsilon,\delta \in \{1,-1\}$ &
    $\mathfrak{so}(2,3)$ & I  \\ \cline{2-4}

    & $(2+c)e_{1234} -(1+b)e_{1256} -(1+b)e_{1278} +(1-b+c) e_{1357} -(1-b-c) e_{1368}
    -(1-b-c)e_{2457} +(1-b+c)e_{2468} -(1+b)e_{3456} -(1+b)e_{3478} +(2+c)e_{5678}$;
    $b\neq 0,\pm 1$, $c\neq 0,-2$,  $\varepsilon b+\delta c \neq 1$ where
    $\varepsilon,\delta \in \{1,-1\}$ & $\mathfrak{so}(2,3)$ & I \\ \cline{2-4}

    & $(1+c)e_{1234} + e_{1357} + e_{2468} +(1+c)e_{5678}
+b(e_{1368} +e_{2457}) -c(e_{1256}+e_{1278}+e_{3456} +e_{3478})$;  $b\neq 0,\pm 1$, $c\neq 0,-1$, 
    $\varepsilon b+2\delta c \neq 1$ where $\varepsilon,\delta \in \{1,-1\}$ &
    $\mathfrak{so}(2,3)$ & I  \\ \cline{2-4}

    &  $(1-b+c)e_{1234} -(1+b+c)e_{1256} +2e_{1357} -(1-b)e_{1368} +(1+b)e_{1458}
    +(1+b)e_{2367} -(1-b)e_{2457} +2e_{2468} -(1+b+c)e_{3478} +(1-b+c)e_{5678}
    -c( e_{1278} + e_{3456} )$; $b\neq 0,\pm 1$, $c\neq 0,-2$,
    $\varepsilon b+\delta c \neq 1$ where
    $\varepsilon,\delta \in \{1,-1\}$ & $\mathfrak{so}(4,1)$ & I \\ \cline{2-4}
    
  & $-(2+c)e_{ 1 2 3 4 }+(b-1)e_{ 1 3 5 7 }-(b-1)e_{ 1 3 6 8 }+(b+1)e_{ 1 4 5 8 }+
(b+1)e_{ 1 4 6 7 } +(b+1)e_{ 2 3 5 8 }+(b+1)e_{ 2 3 6 7 }-(b-1)e_{ 2 4 5 7 }+
    (b-1)e_{ 2 4 6 8 }-(2+c)e_{ 5 6 7 8 }+c(e_{ 1 2 5 6 }+e_{ 1 2 7 8 }+e_{ 3 4 5 6 }+
    e_{ 3 4 7 8 })$; $b\neq 0,\pm 1$, $c\neq 0,-2$, $\varepsilon b+\delta c
    \neq 1$ where $\varepsilon,\delta \in \{1,-1\}$ &
    $\mathfrak{so}(5)$ & I  \\ \cline{2-4}
 & $(4+c)e_{1257} -4e_{1367} +e_{2458} -(1+c)e_{3468} +b(-4e_{1478} +e_{2356})
+c( 2e_{1234} +4e_{1268} -  e_{3457} +2e_{5678})$;  $b\neq 0,\pm 1$, $c\neq 0,-1$, 
    $\varepsilon b+2\delta c \neq 1$ where $\varepsilon,\delta \in \{1,-1\}$ &
    $\mathfrak{so}(2,3)$ & I  \\ \hline
\end{longtable}  

\begin{rmk}\label{rem:geom}
The fourvectors in Table \ref{tab:semsimreps} that have a compact stabilizer play an important role in differential geometry.
There are four of them with compact semisimple stabilizer corresponding to the lines 5, 7, 11 and 19. There is one with a compact reductive (but not semisimple) stabilizer on line 13.
The compact semisimple stabilizers are Lie algebras of Riemannian holonomy groups that occur in Berger's classification, \cite{berg}.
They define $\mathrm{Spin}(7)$, quaternion-Kaehler, Calabi-Yau, and hyperkaehler geometries respectively, all described in \cite{salamon}. We look at each of the fourvectors in somewhat more detail.

Set
\begin{align*}
\Phi_5 =& e_{ 1 2 3 4 }- e_{ 1 2 5 6 }+ e_{ 1 2 7 8 }+ e_{ 1 3 5 7 }+e_{ 1 3 6 8 }
-e_{ 1 4 5 8 }+ e_{ 1 4 6 7 }+e_{ 2 3 5 8 }- e_{ 2 3 6 7 }+ \\
& e_{ 2 4 5 7 }+e_{ 2 4 6 8 }+e_{ 3 4 5 6 }-e_{ 3 4 7 8 }+ e_{ 5 6 7 8 }
\end{align*}
 which equals $\tfrac{1}{2}( - \omega_1^2 + \omega_2^2 - \omega_3^2)$ where
$$ \omega_1 = e_{12}-e_{34} + e_{56}-e_{78},\, \omega_2 = e_{13}-e_{42} + e_{57}-e_{86},\,\omega_3 = e_{14}-e_{23} + e_{58}-e_{67}.$$
We decompose $\R^8=\R^4+\R^4$ and each $\omega_i$ is the sum of two anti-self-dual (ASD) 2-forms on each summand. The stabilizer of $\Phi_5$ is the subgroup 
$\mathrm{Spin}(7) \subset \mathrm{SO}(8)$ resulting from its spin representation. We note that $\Phi_5$ is equivalent to the fundamental 4-form $\Phi_K$ considered
in \cite[Definition 2.9]{bh89}. Indeed, the element $g=\diag(1,1,1,1,1,1,-1,-1)$ maps $\Phi_5$ to $\Phi_K$ (see \cite[(2,23)]{bh89}).

Next we look at 
\begin{align*}
\Phi_7 =& -3e_{ 1 2 3 4 }+e_{ 1 2 5 6 }+e_{ 1 2 7 8 }-e_{ 1 3 5 7 }+e_{ 1 3 6 8 }+
e_{ 1 4 5 8 }+e_{ 1 4 6 7 }+e_{ 2 3 5 8 }+e_{ 2 3 6 7 }+\\
& e_{ 2 4 5 7 }-e_{ 2 4 6 8 }+e_{ 3 4 5 6 }+e_{ 3 4 7 8 }-3e_{ 5 6 7 8 }
\end{align*}
which equals $-\tfrac{1}{2}(\omega_1^2 + \omega_2^2 + \omega_3^2)$ where
$$\omega_1 = e_{12}+e_{34} + e_{65}+e_{87},\, \omega_2 = -e_{13}-e_{42} + e_{68}-e_{75},\, \omega_3 = -e_{14}-e_{23} + e_{67}+e_{58}.$$
Here each $\omega_i$ is the sum of two ASD 2-forms as above (after applying the double transposition $(5,6)(7,8)$ which is induced by an element of $\SL(8,\R)$). The stabilizer of 
$\Phi_7$ is the subgroup $\mathrm{Sp}(2)\mathrm{Sp}(1) \cong (\mathrm{Sp}(2)\times \mathrm{Sp}(1))/Z_2 \subset \mathrm{SO}(8)$
where $Z_2=\{(1,1),(-1,-1)\}$. The subgroup $\mathrm{Sp}(1)=\mathrm{SU}(2)$ acts as $\mathrm{SO}(3)$ rotating the $\omega_i$. Linear deformations of $\Phi_7$ are investigated in \cite{cms}.

Now we consider
\begin{align*}
\Phi_{11} =& -e_{ 1 2 5 6 }+e_{ 1 3 5 7 }+e_{ 1 4 5 8 }+e_{ 2 3 6 7 }+e_{ 2 4 6 8 }
-e_{ 3 4 7 8 }+\\
& b(e_{ 1 2 3 4 }-e_{ 1 2 7 8 }-e_{ 1 3 6 8 }+e_{ 1 4 6 7 }+e_{ 2 3 5 8 }-
e_{ 2 4 5 7 }-e_{ 3 4 5 6 }+e_{ 5 6 7 8 }), \, b\neq 0, 1, -1.
\end{align*}

We have that $\Phi_{11}$ equals  $\tfrac{1}{2}\omega^2 - b \mathrm{Re}(\Omega)$ where $\omega = e_{15}-e_{37}+e_{26}-e_{48}$ is a symplectic 2-form with stabilizer 
$\mathrm{Sp}(4,\R)$ and $\Omega = (e_1+ie_5)(e_3-ie_7)(e_2+ie_6)(e_4-ie_8)$   is a complex volume form with stabilizer $\mathrm{SL}(4,\C)$.
The intersection of these two groups is $\mathrm{SU}(4)$, a subgroup of $\mathrm{Spin}(7)$. If $b=\pm 1$ then the form is conjugate to $-\Phi_5$. Indeed 
$\diag(-1,-1,-1,1,-1,1,1,1)$ (if $b=1$) and $\diag(1,-1,-1,1,1,1,1,1)$ (if $b=-1$) map $\Phi_{11}$ to $-\Phi_5$. 

Finally, let
\begin{align*}
\Phi_{19}=& -(2+c)e_{ 1 2 3 4 }+(b-1)e_{ 1 3 5 7 }-(b-1)e_{ 1 3 6 8 }+(b+1)e_{ 1 4 5 8 }+
(b+1)e_{ 1 4 6 7 } +\\
& (b+1)e_{ 2 3 5 8 }+(b+1)e_{ 2 3 6 7 }-(b-1)e_{ 2 4 5 7 }+(b-1)e_{ 2 4 6 8 }-(2+c)e_{ 5 6 7 8 }+\\
& c(e_{ 1 2 5 6 }+e_{ 1 2 7 8 }+e_{ 3 4 5 6 }+e_{ 3 4 7 8 }),\\
& b\neq 0,\pm 1,\, c\neq 0,-2,\, \varepsilon b+\delta c
    \neq 1\,\text{ where } \varepsilon,\delta \in \{1,-1\}.
\end{align*}
With $\omega_1,\omega_2,\omega_3$ as for $\Phi_7$ we have $\Phi_{19}=\tfrac{1}{2}(-c\omega_1^2
+(b-1)\omega_2^2-(b+1)\omega_3^2)$.
\end{rmk}

\section{Galois cohomology}\label{sec1}

A significant tool that we will use throughout next sections is Galois cohomology. Here we summarize the main results that we use for determining the real orbits within a complex one.

In this section we let $\cG$ be any group with a fixed automorphism $\sigma$ of order 2. An element $g\in \cG$ is called a \textit{cocycle} if $g\sigma(g)=1$. The set of cocycles is denoted by $Z^1(\cG,\sigma)$ (we omit $\sigma$ when it is clear which automorphism is being used). We can define an equivalence relation on $Z^1(\cG,\sigma)$: $g,g'\in \cG$ are equivalent if $g'=hg\sigma(h)^{-1}$ for some $h\in \cG$. The set of equivalence classes is denoted by $H^1(\cG,\sigma)$, or also by $H^1 \cG$ if it is clear which automorphism $\sigma$ is used. 
It is called the first Galois cohomology set of $\cG$. Suppose $\cG$ acts on a set $X$ which also has a map $\sigma:X\to X$ such that $\sigma(\sigma(x))=x$ for each $x\in X$. We suppose that the action of
$\cG$ is compatible with $\sigma$, that is, $\sigma(g)\sigma(x)=\sigma(gx)$ for all $g\in \cG,x\in X$. Let $x_0\in X$ be invariant under $\sigma$, i.e., $\sigma(x_0)=x_0$. Let $Z_{\cG}(x_0)=\{g\in \cG\mid gx_0=x_0\}$ be its stabilizer. We have a map $i_*:H^1(Z_{\cG}(x_0),\sigma)\to H^1(\cG,\sigma)$ by $i_*([g])=[g]$, where the latter is the class of the cocyle $g$ in $H^1(\cG,\sigma)$. The kernel of this map is $\ker(i_*)=\{[g]\in H^1(Z_{\cG}(x_0),\sigma)\mid g \text{ is equivalent to } 1 \text{ in } Z^1(\cG,\sigma)\}$. By $\cG^\sigma,X^\sigma$ we denote the elements of $\cG,X$ that are fixed under $\sigma$. We are now ready to state a crucial result (see \cite[Section I.5.4, Corollary 1 of Proposition 36]{serre} for a proof):
\begin{theorem}\label{th3}
    Let $x_0\in X^\sigma$ and 
    let $Y=\cG\cdot x_0$ be its $\cG$-orbit. Let $Y^\sigma=\{y\in Y\mid\sigma(y)=y\}$. Then the $\cG^\sigma$-orbits in $Y^\sigma$ are in bijection with $\ker(i_*)$. The bijection is given as follows. Let $[g]\in \ker(i_*)$, then there is a $h\in \cG$ such that $g=h^{-1}\sigma(h)$. The class $[g]$ corresponds to the $\cG^\sigma$-orbit of $h\cdot x_0$.
    \end{theorem}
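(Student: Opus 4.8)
The plan is to construct the claimed bijection directly and then verify it is well-defined and invertible; the essential device is to attach to each point of $Y^\sigma$ a cocycle valued in the stabilizer $Z_\cG(x_0)$. Given $y\in Y^\sigma$, I would first pick any $h\in\cG$ with $y=h\cdot x_0$, which exists since $y\in Y=\cG\cdot x_0$. Applying $\sigma$ and using compatibility together with $x_0\in X^\sigma$ gives $\sigma(h)\cdot x_0=\sigma(h)\cdot\sigma(x_0)=\sigma(h\cdot x_0)=\sigma(y)=y=h\cdot x_0$, so $g:=h^{-1}\sigma(h)\in Z_\cG(x_0)$. A one-line check using $\sigma^2=\id$ shows $g\sigma(g)=h^{-1}\sigma(h)\,\sigma(h)^{-1}h=1$, so $g$ is a cocycle; and since $1=h\,g\,\sigma(h)^{-1}$, the cocycle $g$ is equivalent to $1$ in $Z^1(\cG,\sigma)$, so by definition $[g]\in\ker(i_*)$. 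This yields a candidate assignment $y\mapsto[g]$.

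Next I would show this descends to a map from the set of $\cG^\sigma$-orbits in $Y^\sigma$ into $\ker(i_*)$. Independence of the choice of $h$: any two choices with $h\cdot x_0=h'\cdot x_0$ differ by $h'=hz$ with $z\in Z_\cG(x_0)$, and then $g'=h'^{-1}\sigma(h')=z^{-1}g\,\sigma(z)=z^{-1}g\,\sigma(z^{-1})^{-1}$, which is exactly the coboundary relation in $Z^1(Z_\cG(x_0),\sigma)$ witnessed by $w=z^{-1}$; hence $[g']=[g]$ in $H^1(Z_\cG(x_0),\sigma)$. Invariance along $\cG^\sigma$-orbits: replacing $y$ by $a\cdot y$ with $a\in\cG^\sigma$ amounts to taking $h'=ah$, and because $\sigma(a)=a$ the cocycle $h'^{-1}\sigma(h')=h^{-1}a^{-1}\sigma(a)\sigma(h)=h^{-1}\sigma(h)=g$ is literally unchanged.

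For surjectivity I would run the construction in reverse, precisely as in the statement: given $[g]\in\ker(i_*)$, the defining condition furnishes $h\in\cG$ with $1=h\,g\,\sigma(h)^{-1}$, i.e.\ $g=h^{-1}\sigma(h)$; setting $y=h\cdot x_0$ gives $\sigma(y)=\sigma(h)\cdot x_0=hg\cdot x_0=h\cdot x_0=y$, so $y\in Y^\sigma$ and $y$ maps back to $[g]$. Injectivity is the substantive step. Suppose $y=h\cdot x_0$ and $y'=h'\cdot x_0$ in $Y^\sigma$ produce cohomologous cocycles, $g'=z\,g\,\sigma(z)^{-1}$ with $z\in Z_\cG(x_0)$. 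Using $\sigma(z)^{-1}=\sigma(z^{-1})$ I would rewrite this as $h'^{-1}\sigma(h')=(hz^{-1})^{-1}\sigma(hz^{-1})$; writing $k=hz^{-1}$ and manipulating, the element $a:=k\,h'^{-1}$ satisfies $\sigma(a)=a$, so $a\in\cG^\sigma$, while $a\cdot y'=k\cdot x_0=hz^{-1}\cdot x_0=h\cdot x_0=y$. Thus $y$ and $y'$ lie in a single $\cG^\sigma$-orbit, establishing injectivity.

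The one delicate point is the bookkeeping with $\sigma$ in the injectivity argument: the relation $g'=z\,g\,\sigma(z)^{-1}$ must be reorganized into the form $h'^{-1}\sigma(h')=k^{-1}\sigma(k)$ and then into an identity exhibiting an honest $\sigma$-fixed element $a$ transporting $y'$ to $y$, which hinges on careful use of $\sigma(z)^{-1}=\sigma(z^{-1})$ and $\sigma^2=\id$. Everything else is a routine unwinding of the definitions of cocycle, coboundary, and $\ker(i_*)$; for the abstract formulation I would refer to Serre's proof of the cited corollary.
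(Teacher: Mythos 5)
Your proof is correct. All four steps check out: the assignment $y\mapsto[h^{-1}\sigma(h)]$ lands in $Z^1(Z_{\cG}(x_0),\sigma)$ and indeed in $\ker(i_*)$; the two well-definedness verifications (change of $h$ by a stabilizer element, translation of $y$ by an element of $\cG^\sigma$) are exactly the right computations; surjectivity is the reverse construction; and your injectivity argument is sound --- from $g'=zg\sigma(z)^{-1}$ with $z\in Z_{\cG}(x_0)$ one gets $h'^{-1}\sigma(h')=k^{-1}\sigma(k)$ with $k=hz^{-1}$, whence $a=kh'^{-1}$ is $\sigma$-fixed and carries $y'$ to $y$. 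The only point you leave implicit (as does the paper) is that $\sigma$ restricts to an involution of $Z_{\cG}(x_0)$, needed for $H^1(Z_{\cG}(x_0),\sigma)$ to make sense; this follows in one line from $\sigma(x_0)=x_0$ and compatibility, and is worth stating. Note that the paper does not prove this theorem at all: it cites Serre (Galois cohomology, Section I.5.4, Corollary 1 of Proposition 36), where the statement is obtained from the general twisting formalism and the exact sequence of pointed sets associated to a stabilizer, in the setting of (profinite) group cohomology. Your argument is the elementary, self-contained unwinding of that machinery in the special case of a single order-2 automorphism, which is precisely the setting the paper works in. What the citation buys is brevity and generality; what your direct proof buys is transparency and, importantly for this paper, an explicit description of how representatives $h\cdot x_0$ are produced from cocycles --- which is exactly how the theorem is used algorithmically in Sections 5 and 6.
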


Now we outline how to determine the first Galois cohomology set of a group, assuming we can do it for a normal subgroup and the quotient by it. We sketch the method as it is also 
contained in other sources (for example \cite{borwdg}). It will be used in Section \ref{sec3}.

Let $\cG$ be a group with involution $\sigma$. Let $\cN$ be a normal subgroup that is stable under $\sigma$. Then $\sigma$ induces an involution on the quotient
$\cC=\cG/\cN$ which we also denote by $\sigma$. Let $\pi : \cG\to \cC$ be the projection map. Hence we have an exact sequence
$$1\to \cN \labelto{i} \cG\labelto{\pi} \cC\to 1$$
where $i$ is the inclusion map. The maps $i$ yields a map $i_* : H^1(\cN,\sigma) \to H^1(\cG,\sigma)$, $i_*([n]) = [i(n)]$; similarly we have a map 
$\pi_* : H^1(\cG,\sigma) \to H^1(\cC,\sigma)$, $\pi_*([g]) = [\pi(g)]$. 

\begin{lemma}\label{lem:galc1}
We have $\im(i_*)=\ker (\pi_*)$ (where $\ker(\pi_*)=\{[g]\in H^1(\cG,\sigma) \mid \pi_*([g]) = [1] \text{ in } H^1(\cC,\sigma)\}$).
\end{lemma}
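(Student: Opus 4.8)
The plan is to prove the two inclusions $\im(i_*)\subseteq \ker(\pi_*)$ and $\ker(\pi_*)\subseteq \im(i_*)$ separately, by direct manipulation of cocycles; the first is immediate and the second carries the real content. Throughout I would rely on the single structural fact that $\pi$ intertwines the two involutions, i.e. $\pi\circ\sigma = \sigma\circ\pi$ on $\cG$, which holds precisely because the involution on $\cC=\cG/\cN$ is the one induced by $\sigma$.

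For $\im(i_*)\subseteq\ker(\pi_*)$: given a cocycle $n\in Z^1(\cN,\sigma)$, viewing $n$ as an element of $\cG$ gives $\pi_*(i_*([n])) = [\pi(n)]$, and since $n\in\cN=\ker\pi$ we have $\pi(n)=1$, so this class is trivial. Hence every element of $\im(i_*)$ lies in $\ker(\pi_*)$.

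For the reverse inclusion, take $[g]\in\ker(\pi_*)$ with $g\in Z^1(\cG,\sigma)$. The hypothesis $\pi_*([g])=[1]$ unpacks, via the definition of the equivalence relation on $Z^1(\cC,\sigma)$, to the statement that $\pi(g)=\bar c\,\sigma(\bar c)^{-1}$ for some $\bar c\in\cC$. The key step is then to lift and twist: choose $h\in\cG$ with $\pi(h)=\bar c$ (possible since $\pi$ is surjective) and set $g' = h^{-1}g\,\sigma(h)$. By construction $g'$ is equivalent to $g$ in $Z^1(\cG,\sigma)$ (taking the conjugating element to be $h^{-1}$), so $[g']=[g]$ in $H^1(\cG,\sigma)$ and in particular $g'$ is again a cocycle. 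Applying $\pi$ and using $\pi\circ\sigma=\sigma\circ\pi$ gives $\pi(g') = \bar c^{-1}\,\pi(g)\,\sigma(\bar c) = \bar c^{-1}\,\bar c\,\sigma(\bar c)^{-1}\,\sigma(\bar c) = 1$, so $g'\in\ker\pi=\cN$. Since $\cN$ is $\sigma$-stable and $g'\sigma(g')=1$, we conclude $g'\in Z^1(\cN,\sigma)$, and then $i_*([g']) = [g'] = [g]$, exhibiting $[g]$ in $\im(i_*)$.

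The computation is elementary, so there is no serious obstacle; the only points requiring care — and the places where I expect a reader to want detail — are the correct unpacking of ``$\pi(g)$ is trivial'' into the twisted form $\bar c\,\sigma(\bar c)^{-1}$ (this is non-abelian cohomology, so one must track the twisted conjugation rather than a difference), and the verification that the twisted element $g'$ genuinely lands in $Z^1(\cN,\sigma)$ rather than merely in $\cN$, which uses the $\sigma$-stability of $\cN$ together with the fact that the equivalence relation preserves the cocycle condition.
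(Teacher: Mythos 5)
Your proof is correct and takes essentially the same route as the paper: both settle $\im(i_*)\subseteq\ker(\pi_*)$ by noting $\pi\circ i$ is trivial, and for the converse both lift the element of $\cC$ trivializing $\pi_*([g])$ to an element of $\cG$ and twist $g$ by it into an equivalent cocycle lying in $\cN=\ker\pi$. The paper's intermediate step (writing $g=g_1^{-1}\sigma(g_1)n_1$ and moving $n_1$ across $\sigma(g_1)$ by normality of $\cN$) produces exactly your element $g'=h^{-1}g\sigma(h)$ upon setting $g_1=h^{-1}$, so the two arguments coincide.
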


This lemma is contained in \cite[I \S 5, Proposition 38]{serre}. We also sketch its proof. First of all, $\pi(i(n)) = 1$ for all $n\in \cN$ implying that $\im(i_*)\subset \ker(\pi_*)$.
For the converse let $g\in \cG$ be such that $\pi_*([g])=[1]$; then there is a $c_1\in \cC$ such that $\pi(g)=c_1^{-1}\sigma(c)$. Write $c_1=\pi(g_1)$, $g_1\in \cG$. As 
$\pi(g)=\pi(g_1^{-1}\sigma(g_1))$ we get $g=g_1^{-1}\sigma(g_1)n_1$ for some $n_1\in \cN$. We have $\sigma(g_1)n_1\sigma(g_1)^{-1}=n_2\in \cN$ so that $g=g_1^{-1}n_2\sigma(g_1)$.
So $n_2$ is a cocycle and $i_*([n_2]) = [g]$. 

Let $\cC^\sigma = \{c\in \cC\mid \sigma(c)=c\}$. We now define a right action of $\cC^\sigma$ on $H^1(\cN,\sigma)$. 
For $c\in \cC^\sigma$ write $c=\pi(g)$ for a $g\in \cG$. For $n\in Z^1(\cN,\sigma)$ we have that $g^{-1}n\sigma(g)\in Z^1(\cN,\sigma)$. We define
\begin{equation}\label{eq1}
[n]\cdot c = [g^{-1}n\sigma(g)].
\end{equation}
Short calculations show that $[g^{-1}n\sigma(g)]\in H^1(\cN,\sigma)$ is independent of the choice of $g$ and that this indeed defines an action of $\cC^\sigma$. 

\begin{lemma}\label{lem:galc2}
Let $n_1,n_2\in Z^1(\cN,\sigma)$. Then $i_*([n_1]) = i_*([n_2])$ if and only if there exists a $c\in \cC^\sigma$ with $[n_1]\cdot c = [n_2]$.    
\end{lemma}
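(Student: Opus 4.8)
The plan is to prove both implications by directly unwinding the definitions of the two equivalence relations involved (cohomology in $\cN$ versus cohomology in $\cG$) and matching up the corresponding twisting elements; this is the standard argument for the exactness, as a sequence of pointed sets with a group action, of the cohomology sequence attached to the extension $1\to\cN\to\cG\to\cC\to 1$ (cf.\ \cite[I \S 5]{serre}). Throughout I will use the two facts established just before the statement: that for $c=\pi(g)\in\cC^\sigma$ and $n\in Z^1(\cN,\sigma)$ the element $g^{-1}n\sigma(g)$ again lies in $Z^1(\cN,\sigma)$, and that its class in $H^1(\cN,\sigma)$ does not depend on the choice of preimage $g$ of $c$.

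For the ``if'' direction, I would start from the assumption $[n_1]\cdot c=[n_2]$ for some $c=\pi(g)\in\cC^\sigma$. By definition this says $[g^{-1}n_1\sigma(g)]=[n_2]$ in $H^1(\cN,\sigma)$, so there is an $m\in\cN$ with $n_2=m\,(g^{-1}n_1\sigma(g))\,\sigma(m)^{-1}$. Setting $h=mg^{-1}\in\cG$ and using $\sigma(g)\sigma(m)^{-1}=\sigma(gm^{-1})=\sigma(h)^{-1}$, this rewrites as $n_2=h\,n_1\,\sigma(h)^{-1}$, which is precisely the assertion that $n_1$ and $n_2$ are equivalent cocycles in $Z^1(\cG,\sigma)$. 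Hence $i_*([n_1])=i_*([n_2])$.

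For the converse, I would begin from $i_*([n_1])=i_*([n_2])$, i.e.\ $n_2=h\,n_1\,\sigma(h)^{-1}$ for some $h\in\cG$. Applying $\pi$ and using $\pi(n_1)=\pi(n_2)=1$ (since $n_1,n_2\in\cN=\ker\pi$) yields $1=\pi(h)\,\sigma(\pi(h))^{-1}$, so $\pi(h)\in\cC^\sigma$. I then set $c=\pi(h)^{-1}\in\cC^\sigma$ and evaluate the action using the preimage $g=h^{-1}$ of $c$: this gives $[n_1]\cdot c=[g^{-1}n_1\sigma(g)]=[h\,n_1\,\sigma(h)^{-1}]=[n_2]$, as required.

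The argument is essentially bookkeeping, so I do not expect a deep obstacle; the one point that genuinely requires care is keeping the $\sigma$-twisted conjugations and inverses consistent with the paper's conventions (cocycle $\Leftrightarrow g\sigma(g)=1$, equivalence $\Leftrightarrow g'=hg\sigma(h)^{-1}$, action $\Leftrightarrow [n]\cdot c=[g^{-1}n\sigma(g)]$). In particular, in the converse one must pick the correct sign of the preimage --- $g=h^{-1}$ rather than $g=h$ --- so that the twist in the $\cC^\sigma$-action lands exactly on $n_2$ and not merely on a cocycle conjugate to it, and one must invoke $c\in\cC^\sigma$ precisely where it is needed, namely to guarantee $g^{-1}n\sigma(g)\in\cN$ via $\pi(g)=\sigma(\pi(g))$.
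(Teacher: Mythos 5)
Your proof is correct and follows essentially the same route as the paper's: both directions are the same short verification, unwinding the definition of the $\cC^\sigma$-action and matching the twisting element $h\in\cG$ (your $g=h^{-1}$, $c=\pi(h)^{-1}$ is exactly the paper's choice of $g$ and $c=\pi(g)$). Your write-up is in fact slightly more careful than the paper's, which only spells out one implication and declares the other "even more straightforward."
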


This lemma is contained in \cite[I \S 5, Proposition 39(ii)]{serre}). 
The proof is again a short verification. If $[n_1]$, $[n_2]$ are equivalent in $Z^1(\cG,\sigma)$ then there is a $g\in \cG$ with $g^{-1}n_1\sigma(g) = n_2$.
Applying $\pi$ we see that this entails $\pi(\sigma(g)) = \pi(g)$, so if $c=\pi(g)$ then $c\in \cC^\sigma$ and $[n_1]\cdot c=[n_2]$ in $H^1(\cN,\sigma)$. The converse
is even more straightforward. 

Now fix $[c]\in H^1(\cC,\sigma)$; we wish to compute $\pi_*^{-1}([c])$; if we can do this then we can obviously determine $H^1(\cG,\sigma)$. 
We assume that there is a $g_c\in Z^1(\cG,\sigma)$ such that $\pi(g_c) = c$. (If this is not the case then $\pi_*^{-1}([c])=\emptyset$.) In the sequel we fix such a $g_c$.

Define the involution $\tau : \cG\to \cG$ by $\tau(g) = g_c\sigma(g) g_c^{-1}$. Then for $g\in Z^1(\cG,\tau)$ we have $gg_c\in Z^1(\cG,\sigma)$. Furthermore,
if the cocycles $g_1,g_2\in Z^1(\cG,\tau)$ are equivalent, then $g_1g_c$ and $g_2g_c$ are equivalent in $Z^1(\cG,\sigma)$. So we get a map 
$\eta : H^1(\cG,\tau)\to H^1(\cG,\sigma)$. Its inverse is $[g]\mapsto [gg_c^{-1}]$, hence $\eta$ is a bijection. Similarly we have a bijection
$\xi : H^1(\cC,\tau) \to H^1(\cC,\sigma)$, $\xi([d]) = [dc]$. We get the following diagram

$$\xymatrix{ 
H^1(\cN,\tau)\ar[r]^{i_*} & H^1(\cG,\tau)\ar[r]^{\pi_*}\ar[d]_{\eta} & H^1(\cC,\tau)\ar[d]^{\xi}\\
& H^1(\cG,\sigma)\ar[r]^{\pi_*}  & H^1(\cC,\sigma)          
}$$

where the square on the right commutes.

\begin{proposition}\label{prop1}
Let $\{[n_1],\ldots,[n_r]\}$  be a set of $\cC^\tau$-orbit
representatives in $H^1(\cN,\tau)$, then  $\pi_*^{-1}([c]) = \{[n_1g_c],\ldots,[n_rg_c]\}$.
\end{proposition}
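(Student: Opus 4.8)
The plan is to transport the problem from the $\sigma$-cohomology to the $\tau$-cohomology by means of the bijection $\eta$, where it becomes a statement about $\ker(\pi_*)$ that is already governed by the two preceding lemmas. First I would record the elementary facts that make the pieces fit: since $g_c\in Z^1(\cG,\sigma)$ we have $\sigma(g_c)=g_c^{-1}$, and since $\xi([d])=[dc]$ we have in particular $\xi([1])=[c]$. As a first sanity check I would also verify the inclusion $\supseteq$ directly: each $n_j$ lies in $\cN=\ker\pi$, so $\pi(n_jg_c)=\pi(g_c)=c$ on the nose, and hence $\pi_*([n_jg_c])=[c]$; moreover $n_jg_c\sigma(n_jg_c)=n_jg_c\sigma(n_j)g_c^{-1}=n_j\tau(n_j)=1$, so $n_jg_c$ is genuinely a $\sigma$-cocycle.

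For the reverse inclusion, exhaustiveness and distinctness, I would exploit the commuting square in the diagram preceding the statement. The key observation is that $\eta$ restricts to a bijection from $\pi_*^{-1}([1])\subset H^1(\cG,\tau)$ onto $\pi_*^{-1}([c])\subset H^1(\cG,\sigma)$. Indeed, for $[g]\in H^1(\cG,\tau)$ commutativity gives $\pi_*(\eta([g]))=\xi(\pi_*([g]))$; since $\xi$ is a bijection with $\xi([1])=[c]$, the right-hand side equals $[c]$ if and only if $\pi_*([g])=[1]$. As $\eta$ is itself a bijection, it therefore carries $\pi_*^{-1}([1])$ bijectively onto $\pi_*^{-1}([c])$.

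Next I would identify $\pi_*^{-1}([1])=\ker(\pi_*)$ inside $H^1(\cG,\tau)$. Applying Lemma \ref{lem:galc1} with the involution $\tau$ in place of $\sigma$ (legitimate, as $\tau$ is again an involution of $\cG$ stabilizing $\cN$, using $\sigma(g_c)=g_c^{-1}$ and normality of $\cN$) yields $\ker(\pi_*)=\im(i_*)$, where $i_*:H^1(\cN,\tau)\to H^1(\cG,\tau)$. Then Lemma \ref{lem:galc2}, again read for $\tau$, tells me that $i_*([n])=i_*([n'])$ precisely when $[n]$ and $[n']$ lie in the same $\cC^\tau$-orbit; hence $i_*$ induces a bijection from the set of $\cC^\tau$-orbits on $H^1(\cN,\tau)$ onto $\im(i_*)$. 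With $[n_1],\ldots,[n_r]$ a chosen system of orbit representatives this gives $\ker(\pi_*)=\{i_*([n_1]),\ldots,i_*([n_r])\}$ as a list of pairwise distinct classes.

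Finally I would push this through $\eta$. Since $\eta(i_*([n_j]))=[n_jg_c]$ by the explicit formula $\eta([g])=[gg_c]$ (here $i_*([n_j])$ is the class of $n_j$ viewed inside $\cG$), the bijection of the second paragraph turns the distinct list for $\ker(\pi_*)$ into the distinct list $\{[n_1g_c],\ldots,[n_rg_c]\}$ for $\pi_*^{-1}([c])$, which is exactly the assertion. I expect the only delicate point to be the bookkeeping required to invoke Lemmas \ref{lem:galc1} and \ref{lem:galc2} verbatim with $\tau$ substituted for $\sigma$, and to keep track of which $\pi_*$ (for $\tau$ or for $\sigma$) is meant at each stage; everything else is a formal chase through the commuting diagram.
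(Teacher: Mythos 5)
Your proposal is correct and follows essentially the same route as the paper's proof: transporting $\pi_*^{-1}([c])$ through the bijection $\eta$ and the commuting square to identify it with $\ker(\pi_*)$ in the $\tau$-cohomology, then invoking Lemma \ref{lem:galc1} and Lemma \ref{lem:galc2} (read with $\tau$ in place of $\sigma$) to describe that kernel via $\cC^\tau$-orbits on $H^1(\cN,\tau)$. The only difference is that you spell out the verifications (cocycle check, $\tau$-stability of $\cN$, injectivity bookkeeping) that the paper leaves implicit.
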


\begin{proof}
Let $M=\{[g]\in H^1(\cG,\sigma) \mid \pi_*([g]) = [c]\}$. So $\eta^{-1}(M)$ is the preimage in $H^1(\cG,\tau)$ of $\xi^{-1}([c])$. But $\xi^{-1}([c]) = [1]$, so 
$\eta^{-1}(M)=\ker(\pi_*)$ which by Lemma \ref{lem:galc1} is equal to $\im(i_*)$.  By Lemma \ref{lem:galc2} we see that $\im(i_*)$ corresponds to the $\cC^\tau$-orbits on
$H^1(\cN,\tau)$; the claim follows.
\end{proof}

We end this section with some comments on the use of Galois cohomology in the situation that we consider. For this we revert to the notation of Section \ref{sec:prelim}. 
Let $x\in \g_1^\R$ and let $\O = \G_0\cdot x$ be its $\G_0$-orbit. For the involution $\sigma$ we use complex conjugation, that is we use the maps described in 
Section \ref{sec:realf}.  We note that $H^1 \G_0$ is trivial (see \cite[Corollary III.8.26]{berhuy}). So by Theorem \ref{th3} 
there is a 1-to-1 correspondence between the elements of $H^1(Z_{\G_0}(x))$ and the $\G_0(\mathbb{R})$-orbits in $\mathcal{O}$.
In the following sections we will see how this is applied to nilpotent and semisimple orbits. In both cases there are some differences to the straightfoirward approach just outlined.
For the nilpotent orbits we prefer to work with so-called $\ssl_2$-triples and the actions of the various groups on them. For the semisimple orbits we have the added difficulty
that they come in infinite parametrized classes. Finally, we compute Galois cohomology sets with the algorithm of \cite{borwdg}. For this we need one element of each component
of $Z_{\G_0}(x)$ and one of the main problems we will be concerned with in the next sections is to determine such elements. We also remark that \cite{borwdg} contains algorithms to
determine $g\in \G_0$ such that $g^{-1}\overline{g}=c$, where $c$ is a given cocycle in $\G_0$.

\section{Real nilpotent orbits}\label{sec:nilp}

In this section we describe the classification of real nilpotent orbits.
It is based on the fact that also real nilpotent orbits are classified
by $\ssl_2$-triples. More precisely, we have the following theorem, for which
we use the notation introduced in Section \ref{sec:realf}. Note that the
map $\psi$ of Section \ref{sec:realf} yields a $\G_0$-action on $\g_0$ and $\g_1$, and a
$\G_0(\R)$ action on $\g_0^\R$ and $\g_1^\R$. 

\begin{theorem}\label{thm:sl2tr}
\begin{enumerate}
\item Let $e\in \g_1$ be nilpotent, then there are $h\in \g_0$ and $f\in \g_1$
  such that $(h,e,f)$ is an $\ssl_2$-triple. Moreover, $e,e'\in\g_1$ lying in
  $\ssl_2$-triples $(h,e,f)$, $(h',e',f')$ are $\G_0$-conjugate if and only if
  there is a $g\in \G_0$ such that $g\cdot h' = h$, $g\cdot e'=e$, $g\cdot f'=
  f$.
\item  Let $e\in \g_1^\R$ be nilpotent, then there are $h\in \g_0^\R$ and
  $f\in \g_1^\R$
  such that $(h,e,f)$ is an $\ssl_2$-triple. Moreover, $e,e'\in\g_1^\R$ lying in
  $\ssl_2$-triples $(h,e,f)$, $(h',e',f')$ are $\G_0(\R)$-conjugate if and
  only if
  there is a $g\in \G_0(\R)$ such that $g\cdot h' = h$, $g\cdot e'=e$,
  $g\cdot f'=f$.  
\end{enumerate}
\end{theorem}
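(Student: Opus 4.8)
The plan is to take Part (1) as given---it is the homogeneous Jacobson--Morozov theorem together with the graded conjugacy theorem, i.e.\ exactly \cite[Theorem 8.3.6]{gra16} as recalled in Section~\ref{sec:prelim}---and to deduce the real statement (2) by descent along the conjugation $\sigma$. First I would record that $\sigma$ is a conjugate-linear automorphism of $\g$ commuting with $\theta$: indeed $\theta$ only permutes the fixed Chevalley basis up to sign, so it preserves $\g^\R$, hence $\sigma$ stabilizes both $\g_0$ and $\g_1$ and sends homogeneous $\ssl_2$-triples to homogeneous $\ssl_2$-triples. The single structural fact I would extract from Part (1) is this: for nilpotent $e\in\g_1$ the set $\mathcal{S}$ of \emph{neutral elements}---those $h\in\g_0$ admitting $f\in\g_1$ with $(h,e,f)$ an $\ssl_2$-triple, such $f$ being unique---is a single orbit of the unipotent radical $R=R_u(Z_{\G_0}(e))$, on which $R$ acts \emph{simply transitively}. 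This is because $Z_{\G_0}(h,e,f)$ is a Levi factor of $Z_{\G_0}(e)$, so that $\mathcal{S}\cong Z_{\G_0}(e)/Z_{\G_0}(h,e,f)\cong R$; here $\G_0=\SL(8,\C)$ acts through $\psi$, and since unipotent radicals map isomorphically under the isogeny $\psi$, the group $R$ sits inside $\SL(8,\C)$ itself.

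For existence, let $e\in\g_1^\R$, so $\sigma(e)=e$. Then $Z_{\G_0}(e)$ and $R$ are defined over $\R$, and the torsor structure on $\mathcal{S}$ is defined over $\R$; Part (1) gives $\mathcal{S}\neq\emptyset$, so $\mathcal{S}$ is an $R$-torsor over $\R$. As $R$ is connected unipotent, $H^1(\R,R)=\{*\}$ (it is filtered by copies of the additive group, for which the first cohomology vanishes), so $\mathcal{S}$ has a real point $h_0\in\g_0^\R$. Taking the unique $f_0$ with $(h_0,e,f_0)$ an $\ssl_2$-triple and applying $\sigma$, one finds $(h_0,e,\sigma f_0)$ is also such a triple; uniqueness of $f_0$ forces $\sigma f_0=f_0$, so $f_0\in\g_1^\R$ and $(h_0,e,f_0)$ is the required real triple.

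For the conjugacy statement the ``if'' direction is immediate from $g\cdot e'=e$. For ``only if'', given $g_1\in\G_0(\R)$ with $g_1\cdot e'=e$ I would replace $(h',e',f')$ by $g_1\cdot(h',e',f')=(g_1h',e,g_1f')$, reducing to two \emph{real} triples $(h,e,f)$ and $(\tilde h,e,\tilde f)$ with the same nilpositive element $e$. Both $h,\tilde h\in\mathcal{S}$, so by simple transitivity there is a unique $u\in R$ with $u\cdot h=\tilde h$, whence also $u\cdot f=\tilde f$. Since $\sigma$ preserves $R$ and $\mathcal{S}$ and fixes $h$ and $\tilde h$, the element $\sigma(u)$ also carries $h$ to $\tilde h$, so uniqueness gives $\sigma(u)=u$, i.e.\ $u\in R(\R)\subset\SL(8,\R)=\G_0(\R)$. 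Then $g:=u^{-1}g_1\in\G_0(\R)$ satisfies $g\cdot(h',e',f')=(h,e,f)$, as wanted.

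The hard part is not the cohomological descent, which is automatic once the torsor picture is in place, but establishing that picture in the graded setting and keeping everything inside $\SL(8,\R)$. Concretely I expect two points to need care: verifying that $Z_{\G_0}(h,e,f)$ really is a Levi factor of $Z_{\G_0}(e)$ in the homogeneous situation (so that $\mathcal{S}$ is an $R$-torsor, and not merely a $Z_{\G_0}(e)^\circ$-orbit with reductive stabilizer), and checking that the conjugating element lands in $\SL(8,\R)$ rather than only in $G_0(\R)$---a genuine worry, since $\psi:\SL(8,\R)\to G_0(\R)$ may fail to be surjective, but one that disappears because the operative element $u$ is unipotent and unipotent subgroups lift isomorphically through $\psi$.
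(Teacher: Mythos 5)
Your proposal is correct, but it reaches the real statement by a genuinely different route than the paper. The paper does not descend from $\C$ at all: it observes that the proofs of the complex statements in \cite{gra16} (the existence argument of Lemma 8.3.5 and the conjugacy arguments of Proposition 8.1.3 and Theorem 8.3.6) are field-independent, and simply re-runs them over $\R$. This directly produces a real homogeneous triple, and, for two real triples sharing the nilpositive element $e$, a real nilpotent $z\in\g_0^\R$ such that $\tau=\exp(\ad z)$ carries one triple to the other; the only remaining issue --- that the conjugator must lie in $\SL(8,\R)$ rather than merely in $G_0(\R)$ --- is settled by lifting $z$ through the isomorphism $\psi:\ssl(8,\R)\to\g_0^\R$ and exponentiating, which is exactly the unipotent-lifting phenomenon you invoke at the end. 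Your argument instead treats Part (1) as a black box and performs Galois descent: the homogeneous neutral elements form a torsor under the unipotent radical $R$ of $Z_{\G_0}(e)$, vanishing of $H^1(\R,R)$ gives real existence, and uniqueness of the transporter plus $\sigma$-equivariance gives real conjugacy. Both proofs pivot on the same fact (the transporter between triples with common $e$ is unipotent), but they distribute the work differently. The paper's route avoids cohomology and avoids having to establish the graded Levi decomposition $Z_{\G_0}(e)=Z_{\G_0}(h,e,f)\ltimes R$ and the simply-transitive orbit picture; you correctly flag that decomposition as the delicate point --- it does hold, by combining Part (1) with $e'=e$, the reductivity of $\z_{\hat\g_0}(h,e,f)$ (\cite[Lemma 8.3.9]{gra16}), the $\ad h$-weight decomposition of $\z_{\hat\g_0}(e)$, and a $\theta$-fixed-point argument to pass from $Z_G(e)$ to $Z_{G_0}(e)$, but it is not contained verbatim in Part (1) and would need to be written out or carefully cited. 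In exchange, your route needs nothing about how Part (1) is proved, makes the roles of unipotence and of $H^1(\R,R)=1$ explicit, handles the $\SL(8,\R)$-versus-$G_0(\R)$ issue without any lifting step since $R$ already sits inside $\SL(8,\C)$, and generalizes immediately to any Galois-descent situation of this kind.
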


\begin{proof}
The first part is well known see for example
\cite[Lemma 8.3.5, Theorem 8.3.6]{gra16}. The proof of the second part is analogous.
The existence of the $\ssl_2$-triple is shown as in \cite[Lemma 8.3.5]{gra16}; in that
proof the field is assumed to be algebraically closed, but that assumption plays no role in the proof.

Let $(h'',e,f'')$ be an $\ssl_2$-triple with $h''\in \g_0^\R$, $e,f''\in \g_1^\R$. Then the arguments
in the proofs of \cite[Proposition 8.1.3, Theorem 8.3.6]{gra16} show that there is a nilpotent
$z\in \g_0^\R$ such that with $\tau = \exp(\ad z)$ we have $\tau(h'')=h$, $\tau(e)=e$, $\tau(f'')=f$.

Suppose that there is a $g_0\in \G_0(\R)$ with $g_0(e')=e$. Then with $h''=g_0\cdot h'$, 
$f''=g_0\cdot f'$ we have that $(h'',e,f'')$ is an $\ssl_2$-triple. Let $\tau= \exp(\ad z)$ be 
as above.  Let $\hat z$ be a pre-image of $z$ under
$\varphi$ in $\ssl(8,\R)$. Then also $\hat z$ is nilpotent and hence 
$\hat\tau=\exp(\hat z)\in \G_0(\R)$. Furthermore, $\psi(\hat \tau) = \exp(\ad z)$. So with 
$g=\hat \tau g_0$ we get the second statement of the theorem.
\end{proof}

Now let $\TT$ be the set of all $\ssl_2$-triples $(h,e,f)$ with $h\in \g_0$,
$e,f\in \g_1$. Let $\TT^\R$ be the set of all $\ssl_2$-triples $(h,e,f)$
with $h\in \g_0^\R$, $e,f\in \g_1^\R$. Consider the conjugation of Section
\ref{sec:realf}. Define $\TT^\sigma = \{ (h,e,f)\in \TT \mid \sigma(h)=h,
\sigma(e)=e, \sigma(f)=f\}$. Then $\TT^\sigma = \TT^\R$. Furthermore
$\G_0$ acts on $\TT$ by $g\cdot (h,e,f) = (g\cdot h,g\cdot e, g\cdot f)$.
The action of $\G_0(\R)$ on $\TT^\R$ is defined by the same formula.
Hence Theorems \ref{th3}, \ref{thm:sl2tr} and the fact that the first Galois cohomology 
set of $\G_0$ is trivial immediately imply the following theorem. 

\begin{theorem}\label{thm:realnilp}
For $(h,e,f)\in \TT^\R$ set
$$Z_{\G_0}(h,e,f) = \{g\in \G_0 \mid g\cdot h = h , g\cdot e=e, g\cdot f = f\}.$$
Then the $\G_0(\R)$-orbits contained in $\G_0\cdot e$ are in bijection with
the Galois cohomology set $H^1 (Z_{\G_0}(h,e,f),\sigma)$. In particular, a
class $[c]$ in $H^1 (Z_{\G_0}(h,e,f),\sigma)$ corresponds to the
$\G_0(\R)$-orbit of $g\cdot e$, where $g\in \G_0$ is such that $g^{-1}\sigma(g) =
c$. 
\end{theorem}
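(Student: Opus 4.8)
The plan is to apply Theorem \ref{th3} to the action of $\cG=\G_0$ on the set $X=\TT$ of all $\ssl_2$-triples $(h,e,f)$ with $h\in\g_0$ and $e,f\in\g_1$, taking for $\sigma$ the componentwise complex conjugation of Section \ref{sec:realf} and for the base point the given triple $x_0=(h,e,f)\in\TT^\R$. As already recorded above, $\TT^\sigma=\TT^\R$, the $\G_0$-action is compatible with $\sigma$, and $\G_0^\sigma=\G_0(\R)$; moreover $x_0\in X^\sigma$ by hypothesis. Writing $Y=\G_0\cdot(h,e,f)$ for its $\G_0$-orbit, Theorem \ref{th3} then furnishes a bijection between the $\G_0(\R)$-orbits in $Y^\sigma$ and $\ker(i_*)$, where $i_*\colon H^1(Z_{\G_0}(h,e,f),\sigma)\to H^1(\G_0,\sigma)$, and under which a class $[c]$ with $c=g^{-1}\sigma(g)$ corresponds to the $\G_0(\R)$-orbit of $g\cdot(h,e,f)$. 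Since $H^1(\G_0,\sigma)$ is trivial, $\ker(i_*)$ is all of $H^1(Z_{\G_0}(h,e,f),\sigma)$.

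It then remains to identify the $\G_0(\R)$-orbits in $Y^\sigma$ with the $\G_0(\R)$-orbits contained in $\G_0\cdot e$, and for this I would introduce the map $\Theta$ sending a triple $(h',e',f')\in Y^\sigma$ to its nilpositive element $e'$. Because $Y^\sigma\subset\TT^\R$ and $e'\in\G_0\cdot e$, this lands in $(\G_0\cdot e)\cap\g_1^\R$, and conversely any $\G_0(\R)$-orbit inside $\G_0\cdot e$ consists of exactly such elements; well-definedness on orbits is immediate, since $g\cdot(h',e',f')=(h'',e'',f'')$ with $g\in\G_0(\R)$ forces $g\cdot e'=e''$. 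The actual content is surjectivity and injectivity of $\Theta$ on orbit sets, both supplied by Theorem \ref{thm:sl2tr}. For surjectivity, given a real nilpotent $e'\in(\G_0\cdot e)\cap\g_1^\R$, part (2) produces a real $\ssl_2$-triple $(h',e',f')$ through it, while part (1) shows that $\G_0$-conjugacy of $e$ and $e'$ forces $(h,e,f)$ and $(h',e',f')$ to be $\G_0$-conjugate, so that $(h',e',f')\in Y^\sigma$ and $\Theta(h',e',f')=e'$. For injectivity, if two members of $Y^\sigma$ have $\G_0(\R)$-conjugate nilpositive parts, part (2) yields a $g\in\G_0(\R)$ carrying one whole real triple to the other, so they already lie in a single $\G_0(\R)$-orbit. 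Composing $\Theta$ with the bijection of Theorem \ref{th3} and unwinding the two correspondences shows that $[c]$ with $c=g^{-1}\sigma(g)$ corresponds to the $\G_0(\R)$-orbit of $g\cdot e$, as claimed.

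Since Theorems \ref{th3} and \ref{thm:sl2tr} do all the heavy lifting, the only genuine work is this translation between orbits of triples and orbits of their nilpositive parts. The step needing the most care is surjectivity: one must verify that a real triple built through an arbitrary real nilpotent of $\G_0\cdot e$ lands in the one fixed orbit $Y$, rather than in some other $\G_0$-orbit of triples sharing the same nilpositive element. This is precisely what the ``triples are conjugate if and only if their nilpositive parts are'' assertion of Theorem \ref{thm:sl2tr}(1) guarantees, and it is the hinge on which the whole bookkeeping turns.
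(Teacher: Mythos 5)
Your proposal is correct and follows essentially the same route as the paper: the paper derives the theorem as an immediate consequence of Theorem \ref{th3} applied to the $\G_0$-action on $\TT$, Theorem \ref{thm:sl2tr}, and the triviality of $H^1\G_0$, which is exactly the skeleton you use. The only difference is that you spell out the bookkeeping (the orbit-level bijection between real triples and their nilpositive parts, with surjectivity and injectivity supplied by the two parts of Theorem \ref{thm:sl2tr}) that the paper leaves implicit.
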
  

In order to apply this theorem we need to compute the Galois cohomology sets of the stabilizers $Z_{\G_0}(h,e,f)$. 
The algorithm of \cite{borwdg} for computing the first Galois cohomology set of a linear algebraic group requires as input a basis of 
the Lie algebra of the group, as well as one element of each component of the group. Writing $\hat\g_0= \ssl(8,\C)$ we have that the Lie algebra of $Z_{\G_0}(h,e,f)$ is
$$\z_{\hat\g_0}(h,e,f) = \{ x\in \hat\g_0 \mid x\cdot h=x\cdot e=x\cdot f=0\},$$
which we can compute by linear algebra techniques. 

So the first problem is to compute one element of each component of the stabilizers $Z_{\G_0}(h,e,f)$ where $e$ runs through a set of representatives of the nilpotent orbits.
We use the representatives listed in \cite[Table 10]{Antotrad}. Observe that in the cited list all representatives are real (which is necessary to use Theorem \ref{thm:realnilp}). 
Note that the equations $g\cdot h=h$, $g\cdot e=e$, $g\cdot f=f$ directly translate to polynomial equations on the entries of $g\in \G_0$. So we consider these 
polynomials in 64 indeterminates $a_{ij}$, $1\leq i,j\leq 8$, add the polynomial corresponding to the condition $\det(g)=1$, and compute a Gr\"obner basis (for which we used the computer algebra
system {\sc Magma}). By studying the Gr\"obner basis it is often possible to identify elements of $\G_0$ such that $Z_{\G_0}(h,e,f)$ is the union of the components 
containing these elements. With the algorithm {\sf IsEltOf} (see Section \ref{sec:cmp})
it is then possible to see which elements lie in the same component.
For the majority of the nilpotent orbits we were able to determine elements of each component of $Z_{\G_0}(h,e,f)$ from this Gr\"obner basis. 

However, due to the high complexity of the Gr\"obner basis algorithm, for some of the orbits this computation failed. 
For those cases we turned to a slightly different strategy that we describe as follows. 

Fix an $\ssl_2$-triple $t=(h,e,f)$ with $h\in \g_0$, $e,f\in \g_1$. Consider the stabilizer $Z_{\G_0}(t)$ and its Lie algebra $\mathfrak{z}_{\hat\g_0}(t)$. 
The latter is reductive (cf. \cite[Lemma 8.3.9]{gra16}), so it can be written as the direct sum of its semisimple part $\mathfrak{s}$ and its center $\mathfrak{c}$.
We assume that $\s$ is nonzero, and we fix a Cartan subalgebra of $\mathfrak{s}$, a set of simple roots in the root system and a set of canonical generators
$e_1,\ldots,e_s$, $f_1,\ldots,f_s$, $h_1,\ldots,h_s$ (see Section \ref{sec:cmp}). Relative to these choices we consider the group $O(\s)$ (see Section \ref{sec:cmp}). 
Also we remark that a $g\in Z_{\G_0}(t)$ acts on $\z_{\hat\g_0}(t)$ by the adjoint action (cf. \cite[\S 3.13]{borel}). Furthermore, since $\s$ is the derived subalgebra of
$\z_{\hat\g_0}(t)$, it is stabilized by the adjoint action of the elements of $Z_{\G_0}(t)$. 
\begin{proposition}
    Each component of $Z_{\G_0}(t)$ has an element whose action on $\mathfrak{s}$ induces an element of $O(\s)$.
\end{proposition}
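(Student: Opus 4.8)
The plan is to study the homomorphism $\rho\colon Z_{\G_0}(t)\to \Aut(\s)$ given by the adjoint action on $\s$. As recalled just before the statement, each $g\in Z_{\G_0}(t)$ acts on $\z_{\hat\g_0}(t)$ by $\Ad(g)$, and since $\s$ is the derived subalgebra of $\z_{\hat\g_0}(t)$ this action restricts to a Lie algebra automorphism of $\s$; thus $\rho$ is a well-defined morphism of algebraic groups. Because the image of a connected group under a morphism is connected, $\rho(Z_{\G_0}(t)^\circ)\subseteq \Aut(\s)^\circ$. The overall idea is: given a component of $Z_{\G_0}(t)$ with representative $g_0$, decompose $\rho(g_0)=\sigma_\pi\phi$ with $\sigma_\pi\in O(\s)$ and $\phi\in\Aut(\s)^\circ$, using the splitting $\Aut(\s)=O(\s)\ltimes\Aut(\s)^\circ$ from Section~\ref{sec:cmp}, and then correct $g_0$ within its own component by an element $u\in Z_{\G_0}(t)^\circ$ with $\rho(u)=\phi^{-1}$, so that $\rho(g_0u)=\sigma_\pi\in O(\s)$.

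The key step, and the only substantial one, is to show that $\rho$ restricts to a surjection of $Z_{\G_0}(t)^\circ$ onto $\Aut(\s)^\circ$; this is precisely what guarantees that the correcting element $u$ exists. I would prove it at the level of Lie algebras. The differential of $\rho$ at the identity is $x\mapsto \ad(x)|_\s$ for $x\in\z_{\hat\g_0}(t)=\s\oplus\c$. The centre $\c$ commutes with $\s$, hence maps to $0$, while for $x\in\s$ the operator $\ad(x)|_\s$ is simply the adjoint action of $\s$ on itself. Therefore the image of the differential is $\ad(\s)=\mathrm{Der}(\s)$, the last equality holding because $\s$ is semisimple. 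Since $\mathrm{Der}(\s)$ is exactly the Lie algebra of $\Aut(\s)^\circ$, the image $\rho(Z_{\G_0}(t)^\circ)$ is a closed connected subgroup of $\Aut(\s)^\circ$ whose Lie algebra is all of $\mathrm{Der}(\s)$, and so it equals $\Aut(\s)^\circ$.

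With surjectivity established, the conclusion is immediate: choose $u\in Z_{\G_0}(t)^\circ$ with $\rho(u)=\phi^{-1}$; then $g_0u$ lies in the same component as $g_0$, since $u$ lies in the identity component, and $\rho(g_0u)=\rho(g_0)\rho(u)=\sigma_\pi\phi\phi^{-1}=\sigma_\pi\in O(\s)$. As $g_0$ was an arbitrary component representative, every component of $Z_{\G_0}(t)$ contains an element whose action on $\s$ lies in $O(\s)$, which is the assertion (recall we assume $\s\neq 0$).

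I expect the main obstacle to be the careful justification of the surjectivity step. One must verify that $\rho$ is genuinely a morphism of algebraic groups, so that its restriction to $Z_{\G_0}(t)^\circ$ has closed connected image determined by its Lie algebra, and that the differential computation really yields all of $\mathrm{Der}(\s)$ rather than a proper subalgebra. The remaining ingredients, namely the semidirect-product splitting of $\Aut(\s)$ and the correction trick, are then purely formal.
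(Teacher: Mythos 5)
Your proof is correct and follows essentially the same route as the paper's: both decompose the action of a component representative using $\Aut(\s)=O(\s)\ltimes\Aut(\s)^\circ$ and then correct within the component by an element of the identity component mapping onto the inner factor. The only (minor) difference is the justification of surjectivity onto $\Aut(\s)^\circ$ — the paper cites Steinberg's result that the connected subgroup $S\subseteq Z_{\G_0}(t)^\circ$ with Lie algebra $\s$ surjects onto the adjoint group, while you derive it from the differential computation $\ad(\s)=\mathrm{Der}(\s)$ — and both arguments are valid.
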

\begin{proof}
    Let $S\subseteq Z^\circ_{\G_0}(t)$ be the connected subgroup with Lie algebra $\s$. The action of $S$ on $\s$ induces a morphism of algebraic groups $S\to \Aut(\s)^\circ$.
    Since the latter is the adjoint group of $\s$ this morphism is surjective (\cite[Chapter V, Corollary 1]{steinberg}). 
    Let $g\in Z_{\G_0}(t)$ and consider the action of $g$ on $\mathfrak{z}_{\g_0}(t)$. Clearly, this induces an automorphism of $\mathfrak{s}$, say $\sigma_{g}$. As 
    $\Aut(\s) = O(\s)\ltimes \Aut(s)^\circ$ we can write $\sigma_g = \sigma_\pi \varphi$ with $\varphi\in \Aut(\s)^\circ$ and $\sigma_\pi\in O(\s)$. Let $h\in S$ be a preimage of $\varphi$.
    Then $gh^{-1}$ lies in the same component as $g$ and acts on $\s$ as $\sigma_\pi$.
\end{proof}
In other words, the above proposition shows that representatives of the component group are contained in the set of solutions of the systems of equations given by $g\cdot h_i=h_{\pi(i)}$, 
$g\cdot e_i=e_{\pi(i)}$ and $g\cdot f_i=f_{\pi(i)}$ (one for each possible diagram automorphism $\pi$). These equations yield additional polynomial conditions on the 64 indeterminates $a_{ij}$. 
Write $\hat h_i = \psi^{-1}(h_i)$. Then the equations $g\cdot h_i = h_{\pi(i)}$ are equivalent to $g\hat h_i = \hat h_{\pi(i)} g$. We just take these equations and add 
those deriving from $g\cdot h=h$, $g\cdot e=e$, $g\cdot f=f$. This leads to larger systems of equations which are often easier to compute. Note that the polynomials 
corresponding to $g\cdot h_i = h_{\pi(i)}$ are linear, thus essentially reducing the number of indeterminates. 

Using these procedures we managed to determine the components of all $Z_{\G_0}(h,e,f)$ for all representatives $e$ of the nilpotent orbits. Using Theorem \ref{thm:realnilp} and the algorithms 
of  \cite{borwdg} we were able to find the nilpotent orbits of $\G_0(\R)$. 

\begin{example}
Let $e=e_{1246}+e_{1357}+e_{1258}+e_{1458}+e_{1678}$ and let $(h,e,f)$ be a homogeneous $\ssl_2$-triple containing $e$ (this is number 16 in Table \ref{tabn}).  Computing 
a Gr\"obner basis of the ideal defining $Z_{\G_0}(h,e,f)$ proved computationally too difficult. The centralizer $\z_{\hat\g_0}(h,e,f)$ is simple of type $G_2$. Since this Lie 
algebra has no diagram automorphisms every component of $Z_{\G_0}(h,e,f)$ has an element that acts as the identity on $\z_{\hat\g_0}(h,e,f)$. Adding the polynomials that are
equivalent to this condition we did get an easily computable Gr\"obner basis. It consists of $a_{ij}$ for $i\neq j$ and
$$a_{11} - a_{88},a_{22} - a_{55}a_{77}a_{88}^3,a_{33} - a_{44}a_{66}a_{88}^3,
    a_{44}a_{55} - a_{88}^2,
    a_{66}a_{77} - a_{88}^2,
    a_{88}^4 - 1.
$$
We see that the set of matrices satisfying these conditions forms a 3-dimensional variety of diagonal matrices consisting of four components. It is straightforward to write down an
element of each component. Using the algorithm indicated in Section \ref{sec:cmp} it is then a small calculation to show that these elements lie in different components of $Z_{\G_0}(h,e,f)$.
We see that the component group is cyclic of order 4. 
\end{example}

We end this section by showing how to determine the permutation induced by the map $\nu$
defined in Section \ref{sec:outer} on the sets of complex and real nilpotent orbits.

\begin{lemma}\label{lem:nucomp}
We have that $\nu$ maps a complex nilpotent $\G_0$-orbit to itself. 
\end{lemma}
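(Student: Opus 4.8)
The plan is to reduce the claim to the scaling invariance of complex nilpotent orbits, exploiting the description of $\nu$ and $\nu_1$ already given in Section \ref{sec:outer}. Recall from there that for every $v\in\wedge^4\C^8$ the $\G_0$-orbit of $\nu(v)$ coincides with the $\G_0$-orbit of $iv$ (here $\G_0=\SL(8,\C)$); this followed from the identity $\nu_1\nu(v)=iv$ together with the observation that $\nu_1$, acting on $\wedge^4\C^8$ as the element $g_1\in\G_0$, sends every $\G_0$-orbit to itself. Hence, fixing a nilpotent $e$ and writing $\O=\G_0\cdot e$ for its orbit, the automorphism property $\nu(g\cdot v)=\nu(g)\cdot\nu(v)$ gives $\nu(\O)=\G_0\cdot\nu(e)=\G_0\cdot ie$, and it remains only to prove that $ie$ lies in $\O$.

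To do this I would appeal to Theorem \ref{thm:sl2tr}(1): the nilpotent $e$ belongs to a homogeneous $\ssl_2$-triple $(h,e,f)$ with $h\in\g_0$, so that $[h,e]=2e$. The one-parameter subgroup $t\mapsto\exp(t\,\ad h)$ lies in $G_0$ and, since $e$ is a $2$-eigenvector of $\ad h$, it acts on $e$ by $e\mapsto e^{2t}e$. As $t$ ranges over $\C$ the factor $e^{2t}$ takes every value in $\C^\times$, so every nonzero scalar multiple $ce$ of $e$ lies in $G_0\cdot e$; in particular $ie\in G_0\cdot e$.

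It then remains to transfer this from $G_0$ to $\G_0$: because $\psi:\G_0\to G_0$ is surjective and the $\G_0$-module structure on $\g_1\cong\wedge^4\C^8$ is defined through $\psi$, the orbits $\G_0\cdot e$ and $G_0\cdot e$ coincide, whence $ie\in\G_0\cdot e=\O$. Combined with $\nu(\O)=\G_0\cdot ie$ this yields $\nu(\O)=\O$, which is the assertion of the lemma. I do not expect any genuine obstacle here; the only step demanding a little care is the passage between the $\G_0$- and $G_0$-actions, handled by surjectivity of $\psi$. The substantive content is the elementary but crucial remark that the semisimple element $h$ of the $\ssl_2$-triple witnesses invariance of the nilpotent orbit under all nonzero scalars.
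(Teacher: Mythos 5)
Your proof is correct, but it takes a genuinely different route from the paper's. The paper argues via the characteristic: it chooses the homogeneous triple $(h,e,f)$ with $h$ a traceless diagonal matrix, notes that then $\nu(h)=h$ (conjugation by the diagonal matrix $g_0$ fixes $h$), so that $e$ and $\nu(e)$ lie in homogeneous $\ssl_2$-triples with the \emph{same} middle element, and concludes from Theorem \ref{thm:sl2tr}(1) -- really from the fact that the characteristic determines the nilpotent orbit -- that $e$ and $\nu(e)$ are $\G_0$-conjugate. You instead use the identity $\nu_1\nu(v)=iv$ and the fact that $\nu_1$ acts through $g_1\in\G_0$ (both recorded in Section \ref{sec:outer}) to reduce the lemma to the claim $ie\in\G_0\cdot e$, which you verify by the scaling argument: $\exp(t\,\ad h)\in G_0$ multiplies $e$ by $e^{2t}$, sweeping out all of $\C^\times$, and $G_0\cdot e=\G_0\cdot e$ by surjectivity of $\psi$. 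Each approach buys something: yours uses only the \emph{existence} half of Theorem \ref{thm:sl2tr}(1) plus an elementary exponential computation, and along the way establishes the stronger fact that every complex nilpotent orbit is a cone (stable under all nonzero scalars); the paper's argument leans on the deeper conjugacy statement (homogeneous triples with equal $h$ yield conjugate nilpotents), but given that machinery it is shorter, and it is the same characteristic-based reasoning the paper reuses in Section \ref{sec:tab} to describe how $\varphi$ permutes the nilpotent orbits. One cosmetic point: in $e^{2t}e$ you are using $e$ both for Euler's number and for the nilpotent element; writing the eigenvalue factor differently would avoid the clash.
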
  

\begin{proof}
Let $e \in \bigwedge^4 \C^8$ be a representative of such an orbit, lying in the
homogeneous $\ssl_2$-triple $(h,e,f)$. We may choose $e$ such that $h$ lies in
the Cartan subalgebra of $\ssl(8,\C)$ consisting of all traceless diagonal
matrices. Then $\nu(h) = h$. So $(h,\nu(e),\nu(f))$ is a homogeneous
$\ssl_2$-triple containing $\nu(e)$. By Theorem \ref{thm:sl2tr}(1) it follows
that $e$ and $\nu(e)$ are $\G_0$-conjugate.
\end{proof}

Now we show how we can identify the real nilpotent orbit to which $\nu$ maps a
given nilpotent orbit.
Let $e\in \bigwedge^4 \R^8$ lie in the homogeneous $\ssl_2$-triple $(h,e,f)$.
First of all we note that the element $h$ spans the Lie algebra of a
$1$-dimensional torus $H$ in $\G_0$. Morever, as shown in \cite[Section 6]{borwdg}
we can compute an explicit isomorphism $\chi : \C^\times \to H$. Then for
$t\in \C^\times$ we have $\chi(t)\cdot e = t^m \cdot e$ for a certain $m\in \Z$.
We can find a $t_0\in \C$ such that $t_0^m = i$, and thus we have the element
$\chi(t_0)\in \G_0$ with $\chi(t_0)\cdot e = ie$.

Write $Z=Z_{\G_0}(h,e,f)$ and $H^1 Z = \{[c_1],\ldots,[c_r]\}$. Let $g_i\in \G_0$
be such that $g_i^{-1}\sigma(g_i)=c_i$. Then the real $\G_0(\R)$-orbits contained
in the orbit $\G_0\cdot e$ have representatives $g_1\cdot e,\ldots,g_r\cdot e$.
By Lemma \ref{lem:nucomp} it follows that $\nu$ permutes these orbits. 
Using the notation of Section \ref{sec:outer} we compute
$$\nu(g_i\cdot e) = g_0g_ig_0^{-1}g_0\cdot e = g_0g_ig_0^{-1}g_1^{-1}g_1g_0\cdot e=
g_0g_ig_0^{-1}g_1^{-1} \cdot ie =  g_0g_ig_0^{-1}g_1^{-1}\chi(t_0) \cdot e.$$
Set $\hat g_i =  g_0g_ig_0^{-1}g_1^{-1}\chi(t_0)$; then $\hat g_i\in \G_0$ and
$\nu(g_i\cdot e) = \hat g_i\cdot e$. Set $\hat c_i = \hat{g_i}^{-1}
\sigma(\hat g_i)$ then $\hat c_i$ is a cocyle in $Z$. By algorithms of
\cite{borwdg} we can find $j$ such that $\hat c_i$ and $c_j$ are equivalent
cocycles. Then $\nu$ maps the $\G_0(\R)$-orbit of $g_i\cdot e$ to the
$\G_0(\R)$-orbit of $g_j\cdot e$.

\section{Real semisimple orbits}\label{sec2}

In this section we describe our methods for classifying the real semisimple orbits. We use the complex classification by Antonyan (\cite[Table 1]{Antotrad}), in which there are 32 distinct classes of complex semisimple orbits. However, in many cases we work with a different space $\mathfrak{h}_{H_i}$, see Table \ref{tab1}.

\subsection{Determining the components of a stabilizer}

The first step is to compute the component group of the
stabilizer $Z_{\G_0}(p)$, where $p\in \h_{H_i}^\circ$. By Theorem \ref{thm:cent} all elements of $\h_{H_i}^\circ$ have the same stabilizer, so it does not matter which $p$ in that set we
choose. Computing representatives of the elements of the component group works exactly as in Section 2 for most cases, that is, we use direct Gr\"obner basis computations. However, there are some exceptions that require a different approach. In these cases the standard procedure does not work because the centralizer is a torus, so we cannot work with diagram automorphisms to simplify the computations.
The classes for which this happens are the ones numbered 24, 28, 27, 30, 31, 32. Here we show how to deal with the first five, postponing the last case to Section \ref{sec3}.

Let $p\in \h_{H_i}^\circ$ and set $\c=\z_{\hat{\g}_0}(p)$, so that $\Lie(Z_{\G_0}(p))=\c$, and $\mathfrak{c'}=\mathfrak{z}_{\mathfrak{\hat{g}_0}}(\mathfrak{c})$. Then we have the following.
   \begin{proposition}\label{p1}
        With the notation above we have that $g\in Z_{\G_0}(p)$ normalizes both $\c$ and $\c'$.
    \end{proposition}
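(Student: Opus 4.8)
The plan is to carry out everything with respect to the adjoint (conjugation) action of $\G_0$ on $\hat\g_0=\ssl(8,\C)$, under which the word ``normalizes'' is to be read: for $g\in Z_{\G_0}(p)$ and a subspace $\mf a\subset\hat\g_0$, $g$ normalizes $\mf a$ if and only if $\Ad(g)\mf a=\mf a$, i.e. $g\mf a g^{-1}=\mf a$. This is the action already invoked in Section \ref{sec:cmp} (cf. \cite[\S 3.13]{borel}).

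First I would check that $g$ normalizes $\c$, which is purely formal. The subgroup $Z_{\G_0}(p)\subset\G_0$ is algebraic, its identity component $Z_{\G_0}(p)^\circ$ is a characteristic (hence normal) subgroup, and by hypothesis $\Lie(Z_{\G_0}(p))=\c$. Conjugation $c_g\colon h\mapsto ghg^{-1}$ by any $g\in Z_{\G_0}(p)$ restricts to an automorphism of $Z_{\G_0}(p)^\circ$; differentiating $c_g$ at the identity yields $\Ad(g)$, which therefore maps $\c=\Lie\big(Z_{\G_0}(p)^\circ\big)$ onto itself. Hence $\Ad(g)\c=\c$. (Alternatively, since $g^{-1}\in Z_{\G_0}(p)$ gives $g^{-1}\cdot p=p$, the map $c_g$ preserves the condition $h\cdot p=p$ and so preserves $Z_{\G_0}(p)$ itself.)

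Next I would deduce that $g$ normalizes $\c'=\z_{\hat\g_0}(\c)$ from the fact that it normalizes $\c$, using that $\Ad(g)$ is a Lie algebra automorphism of $\hat\g_0$. Let $x\in\c'$, so $[x,y]=0$ for all $y\in\c$. For an arbitrary $z\in\c$ we have $\Ad(g^{-1})z\in\c$ by the previous step, and therefore
\[
[\Ad(g)x,\,z]=\Ad(g)\big[x,\,\Ad(g^{-1})z\big]=\Ad(g)\cdot 0=0.
\]
Thus $\Ad(g)x\in\c'$, giving $\Ad(g)\c'\subseteq\c'$; applying the same argument to $g^{-1}$ gives the reverse inclusion, so $\Ad(g)\c'=\c'$ and $g$ normalizes $\c'$.

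I do not expect a genuine obstacle here: both assertions are formal consequences of $\c$ being the Lie algebra of the stabilizer and of $\Ad(g)$ being an algebra automorphism. The only point requiring care is conceptual rather than computational, namely to keep the adjoint action on $\hat\g_0$ distinct from the module action of $\G_0$ on $\g_1$. The real work of the section lies not in this proposition but in exploiting it: the conditions $g\c g^{-1}=\c$ and $g\c' g^{-1}=\c'$ translate into (partly linear) equations on the entries of $g$ that make the Gr\"obner basis computation of the components tractable precisely in the cases where $\c$ is a torus and no diagram automorphisms are available.
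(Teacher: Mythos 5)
Your proof is correct and follows essentially the same route as the paper's: the stability of $\c$ comes from the fact that conjugation by $g\in Z_{\G_0}(p)$ preserves the stabilizer and hence its Lie algebra, and the stability of $\c'$ follows from the same computation $[gtg^{-1},c]=g[t,g^{-1}cg]g^{-1}=0$ that the paper uses. Your write-up is merely a bit more explicit (noting the reverse inclusion via $g^{-1}$ and separating the adjoint action from the module action), but there is no substantive difference.
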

    \begin{proof}
        At first, let $g\in Z_{\G_0}(p)$ and observe that, if $c\in\mathfrak{c}$, then $gcg^{-1}\in\mf{c}$, as it is just the result of the action of $Z_{\G_0}(p)$ on its Lie algebra. Moreover, if $t\in\mathfrak{c'}$ and $c\in\mathfrak{c}$, we have $[gtg^{-1},c]=g[t,g^{-1}cg]g^{-1}=0$, which means that $g$ normalizes $\mathfrak{c'}$.
    \end{proof}

Let $p$ lie in $\h_{H_i}^\circ$ with $i=24$ or $i=28$. Then a direct computation shows that $\c'$ is a Cartan subalgebra of $\hat\g_0$. In fact, it is the standard Cartan subalgebra
consisting of diagonal matrices. 
    Since $\mf{c'}$ is a Cartan subalgebra of $\mf{\hat{g}}_0$, $N_{\G_0}(\mathfrak{c'})/Z_{\G_0}(\mathfrak{c'})\cong W_0$, the Weyl group of $\mf{\hat{g}}_0$. Let $\beta_1,\dots,\beta_7$ be a choice of simple roots of the root system of $\mf{\hat{g}}_0$ with respect to $\mf{c'}$. Fix a corresponding canonical generating set $x_1,\dots,x_7$, $y_1,\dots,y_7$, $h_1,\ldots,h_7$. Set $s_i=\text{exp}(x_i)\text{exp}(-y_i)\text{exp}(x_i)$; then $s_i\in N_{\G_0}(\mf{c'})$ induces the simple reflection corresponding to $\beta_i$. For $w\in W_0$ we fix a reduced expression as product of simple reflections and let $\dot{w}$ be the analogous product of the $s_i$. Proposition \ref{p1} assures that each element of $Z_{\G_0}(p)$ lies in $N_{\G_0}(\c')$ and hence can be written as $\dot{w}z$ with $z\in Z_{\G_0}(\mf{c'})$. Since $\c'$ is the standard Cartan subalgebra of $\hat\g_0$ consisting of traceless diagonal matrices we have that $Z_{\G_0}(\c')$ is the maximal torus of $\G_0$ consisting of 
    diagonal matrices. Now let $w\in W_0$ and $z\in Z_{\G_0}(\c')$ be such that $\dot{w}z\cdot p = p$. By Proposition \ref{p1} $\dot{w}z$ normalizes $\c$. Since $\c$ is abelian we have $\c\subset\c'$
    so that also $z$ normalizes (in fact, centralizes) $\c$. It follows that $\dot{w}$ normalizes $\c$. 
    We remark that $|W_0|=8!$. We have written a little program checking for which $\dot{w}$ we have that $\dot{w}$ normalizes $\mf{c}$. It turns out that there are 48 such elements. 
    Now we fix one of these 48 elements $\dot{w}$ and try to determine all $z\in Z_{\G_0}(\c')$ such that $\dot{w}z\cdot p=p$. We claim that if $z_1,z_2\in Z_{\G_0}(\c')$ satisfy this, then 
    $(\dot{w}z_1)^{-1}(\dot{w}z_2)$ lies in the identity component of $Z_{\G_0}(p)$. Indeed, $(\dot{w}z_1)^{-1}(\dot{w}z_2)=z_1^{-1}z_2\in Z_{\G_0}(p)$, hence, in particular, $z_1^{-1}z_2\in  Z_{\G_0}(p)\cap Z_{\G_0}(\mf{c'})=\{h\in Z_{\G_0}(\mf{c'})\mid hp=p\}$. It is straightforward to compute the defining equations of the latter variety. An immediate computation in {\sc Magma} shows that the associated ideal is prime, so that the variety $Z_{\G_0}(p)\cap Z_{\G_0}(\mf{c'})$ is connected and hence contained in the identity component of $Z_{\G_0}(p)$. 
    To check whether a solution exists we write $z$ as a diagonal matrix with indeterminates on the diagonal and consider the polynomial equations equivalent to 
    $(\dot{w}z)\cdot p=p$. We compute a Gr\"obner basis and, if that is nontrivial, a solution exists. With this strategy, we get a set of 8 elements, say $\dot{w}_1,\dots,\dot{w}_8$ such that the equations have solutions. However, it turns out that $\dot{w}_i\cdot p=p$ for $i=1,\dots,8$. The set of the classes represented by the $\dot{w}_i's$ is in fact a group of order 8. We can then conclude just observing that the $\dot{w}_i$ are distinct modulo $Z_{\G_0}(\mf{p})^\circ$ because they are distinct modulo $Z_{\G_0}(\mf{c'})$ and $Z_{\G_0}(\mf{p})^\circ\subseteq Z_{\G_0}(\mf{c'})$. 
    So we get a component group of order 8.

Now let $p$ lie in $\h_{H_i}^\circ$ with $i=27, 30$ or $31$. Again we have that $\c$ is a torus (of dimension 3,2,1 respectively),
but this time $\mathfrak{c'}=2A_3+\mathfrak{c}$ (for $i=31$), or $\mathfrak{c'}=4A_1+\mathfrak{t''}$, where $\mathfrak{c}\subseteq \mathfrak{t''}$ and $\mathfrak{t''}$ is a torus of dimension $3$ (for $i=27,30$). From now on we fix $i=31$; the procedure is similiar for $p$ in the other two subalgebras. First observe that there is a Cartan subalgebra of $\hat\g_0$, say $\mathfrak{h'}$, such that $\mathfrak{c}\subseteq\mathfrak{h'}\subseteq\mathfrak{c'}$ (clearly, $\mathfrak{h'}$ is also a Cartan subalgebra of $\mathfrak{c'}$). Again this implies that $N_{\G_0}(\mathfrak{h'})/Z_{\G_0}(\mathfrak{h'})\cong W_0$. In the sequel we write $H'=Z_{\G_0}(\h')$; then $H'$ is the connected torus whose Lie algebra is $\h'$. This also implies that 
$H'\subset Z_{\G_0}(\c)$. For each $w\in W_0$ we fix, in the same way as above, a $\dot{w}\in N_{\G_0}(\h')$ inducing $w$. We also observe that, as $\c$ is a torus, we have that the
group $Z_{\G_0}(\c)$ is connected (\cite[Corollary 3.11]{steinberg75}). 

Let $g\in Z_{\G_0}(p)$; by Proposition \ref{p1} $g$ normalizes $\mathfrak{c}$ and $\mathfrak{c'}$, so that $g\mathfrak{h'}g^{-1}\subseteq\mathfrak{c'}$. Then, since $g\mathfrak{h'}g^{-1}$ is another Cartan subalgebra of $\mathfrak{c'}$, it has to be $Z_{\G_0}(\c)$-conjugate to $\h'$; in other words, there is an
$h\in Z_{\G_0}(\c)$ with $h(g\mathfrak{h'}g^{-1})h^{-1}=\mathfrak{h'}$. Hence $hg\in N_{\G_0}(\mathfrak{h'})$ so that $hg=t\dot{w}$ for some $w\in W_0$ and $t\in H'$. As $h\in Z_{\G_0}(\mathfrak{c})$ we have $(hg)\mathfrak{c}(hg)^{-1}=\mathfrak{c}$, hence $t\dot{w}\mathfrak{c}\dot{w}^{-1}t^{-1}=\mathfrak{c}$, which is equivalent to $\dot{w}\mathfrak{c}\dot{w}^{-1}=\mathfrak{c}$. As in the previous paragraph, we can then compute the subset $W^*$ of $W_0$ consisting of all $w\in W_0$ such that $\dot{w}$ normalizes $\mf{c}$. It turns out that $|W^*|=1152$. Summarizing: for any $g\in Z_{\G_0}(p)$ we have $g=k\dot{w}$, where $k\in Z_{\G_0}(\mathfrak{c})$ and $w\in W^*$. With the algorithm {\sf IsEltOf} we can check for each pair $w_1,w_2\in W^*$ whether $\dot{w}_1$, $\dot{w}_2$ are equal modulo $Z_{\G_0}(\c)$, that is, whether $\dot{w}_1\dot{w}_2^{-1}\in Z_{\G_0}(\c)$. 
It turns out that $W^*$splits into two disjoint classes, $W^*=W^*_0\cupdot W^*_1$ where  $|W^*_i|=512$ and, if $w_1,w_2\in W^*_i$ then $\dot{w}_1$ and $\dot{w}_2$ are equal modulo $Z_{\G_0}(\mf{c})$. 
For $w\in W^*$ set $X_w = \{ k\dot{w} \mid k\in Z_{\G_0}(\c), (k\dot{w})\cdot p = p\}$. It is straightforward to see that if $\dot{w}_1$, $\dot{w}_2$ are equal modulo $Z_{\G_0}(\c)$ then 
$X_{w_1} = X_{w_2}$. The set $W_0^*$ contains the identity and let $w_1$ be a fixed element of $W_1^*$. Then it follows that $Z_{\G_0}(p) = X_1 \cup X_{w_1}$. Therefore, we deduce that it is sufficient to solve the two systems of equations $kp=p$ and $k\dot{w}_1p=p$ for $k\in Z_{\G_0}(\c)$. By inspecting the Lie algebra $\c'$ of $Z_{\G_0}(\c)$ we see that the latter group $Z_{\G_0}(\c) = 
(\SL(4,\C)\times \SL(4,\C))\cdot T_1$ where each $\SL(4,\C)$ occupies a $4\times 4$-block in the matrix of an element of $Z_{\G_0}(\c)$ and the $T_1$ is a 1-dimensional torus with Lie 
algebra $\c$. So we can write a general element $k$ of $Z_{\G_0}(\c)$ as a matrix depending on 33 parameters $a_1,\ldots,a_{32}, s_1$.
Then, we compute $kp$ and $k\dot{w}_1p$, two elements in $\bigwedge^4\C^8$ of the form $\sum_{1\le i<j<k<l\le 8} \gamma^{ijkl}(a_1;\dots;a_{32};s_1)e_{ijkl}$; now, if $p=\sum_{1\le i<j<k<l\le 8} q^{ijkl}e_{ijkl}$, we deduce the equations $\gamma^{ijkl}(a_1;\dots;a_{32};s_1)=q^{ijkl}$ (at this point, we also add the equations expressing that the determinants of the two $4\times 4$-blocks is 1). In particular, we will have two large systems of equations that we can reduce computing their Gr\"obner bases. Now, observe that by the construction above the number of solutions is not finite. Indeed, $T_1=Z_{\G_0}(p)^\circ\subset Z_{\G_0}(c)$ (as $\c\subset \c'$) so if $g\in T_1$ then $g$ is a solution of the first set of equations and $g\dot{w}_1^{-1}$ is a solution of the second set of 
equations. However, as $T_1=Z_{\G_0}(p)^\circ$ we can fix the parameter $s_1$ arbitrarily, after which the solution sets are finite. It turns out that the first system gives $64$ solutions, the second one $128$.
After that, we can just conclude computing the group generated by those solutions and then computing it modulo the identity component $Z_{\G_0}(\mathfrak{c})^\circ$ using again the algorithm 
{\sf IsEltOf}. In the end we find a component group of 64 elements.

\subsection{Determining the semisimple orbits}

Now we turn to the problem to determine the semisimple $\G_0(\R)$-orbits. We fix a subgroup $H$ of $W$ and write $\A=\h_{H}^\circ$. We wish to determine the $\G_0(\R)$-orbits 
$\G_0(\R)\cdot q$ such that the complex orbit $\G_0\cdot q$ has a point $q'$ in $\A$. The problem is that $q'$ may be non-real, but we need the real point $q$ in order to use
Galois cohomology. We use the theorem below to deal with this. In order to formulate it we first need some notation. Set
\begin{align*}
N_{\G_0}(\A) &= \{ g\in \G_0\mid g\cdot p\in \A \text{ for all } p\in \A\}\\
Z_{\G_0}(\A) &= \{ g\in \G_0\mid g\cdot p=p \text{ for all } p\in \A\}.
\end{align*}
Let $N_W(H)$ denote the normalizer of $H$ in $W$. Let $g\in N_{\G_0}(\A)$. Then there is a $w_g\in N_W(H)$ such that $g\cdot p = w_g\cdot p$ for all $p\in \A$ (\cite[Lemma 4.13]{gl24}).
We set $\Gamma_H = N_W(H)/H$ and define a map $\varphi : N_{\G_0}(\A)\to \Gamma_H$ by $\varphi(g)=w_gH$. By \cite[Lemma 4.13]{gl24} this is well defined and a surjective group homomorphism
with kernel $Z_{\G_0}(\A)$. Now we can state the theorem; for the proof we refer to \cite[Proposition 4.14]{gl24}.

\begin{theorem}\label{th1}
    Let $H^1(\Gamma_H)=\{[\gamma_1],\dots,[\gamma_r]\}$. Then: 
    \begin{enumerate}
        \item Let $\O$ be a $\G_0$-orbit with a representative in $\A$. If $\O$ has real points then there is 
        $q\in\A\cap \O$ and  $i$ with $1\leq i\leq r$ such that $\overline{q}=\gamma_i^{-1}q$.
        \item Let $q\in\A$ be such that $\overline{q}=\gamma_i^{-1}q$ for some $1\le i\le r$. Then the orbit of $q$ has real points if and only if there exists a cocycle $c_i\in Z^1(N_{\G_0}(\A))$ 
        such that $\varphi(c_i)=\gamma_i$. If the latter holds, then $gq$ is a real point of the orbit of $q$, where $g\in \G_0$ is such that $g^{-1}\overline{g}=c_i$.
    \end{enumerate}
\end{theorem}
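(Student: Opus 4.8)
The plan is to derive both parts from the short exact sequence
$$1 \to Z_{\G_0}(\A) \to N_{\G_0}(\A) \xrightarrow{\varphi} \Gamma_H \to 1$$
together with three facts already available: that $H^1\G_0$ is trivial (Section \ref{sec1}), that two points of $\A=\h_H^\circ$ lying in a single $\G_0$-orbit are $\Gamma_H$-conjugate (Section \ref{sec:prelim}), and --- the decisive ingredient --- that $Z_{\G_0}(q)=Z_{\G_0}(\A)$ for every $q\in\A$. This last equality is immediate from Theorem \ref{thm:cent}: since $Z_{\G_0}(\A)=\bigcap_{p\in\A}Z_{\G_0}(p)$ and all the $Z_{\G_0}(p)$ coincide, the intersection equals each of them. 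I would first record two routine preliminaries. Because $\overline{\cdot}$ fixes $\h^\R$ and each $w\in W$ preserves $\h^\R$, conjugation commutes with the $W$-action and hence maps $\h_H^\circ$ to itself, so $\A$ is $\overline{\cdot}$-stable; likewise any orbit $\O=\G_0\cdot x$ with a real point $x$ satisfies $\overline{\O}=\G_0\cdot\overline{x}=\O$. Moreover $\Gamma_H$ acts on $\A$ with trivial stabilizers: if $wH$ fixes $q\in\h_H^\circ$ then $w\in W_q=H$.

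For part (1) I would run a twisting argument. Choosing any $q_0\in\A\cap\O$, the stability facts give $\overline{q_0}\in\A\cap\O$, hence $\overline{q_0}=\gamma q_0$ for some $\gamma\in\Gamma_H$. Applying $\overline{\cdot}$ once more and using that stabilizers are trivial forces $\overline{\gamma}\gamma=1$, so $\beta:=\gamma^{-1}$ lies in $Z^1(\Gamma_H)$ and $\overline{q_0}=\beta^{-1}q_0$. Writing $[\beta]=[\gamma_i]$, say $\beta=\delta\gamma_i\overline{\delta}^{-1}$, I would set $q:=\delta^{-1}q_0\in\A\cap\O$; a one-line substitution then gives $\overline{q}=\gamma_i^{-1}q$, which is the claim.

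For part (2) the easy direction is $(\Leftarrow)$. Given $c_i\in Z^1(N_{\G_0}(\A))$ with $\varphi(c_i)=\gamma_i$, triviality of $H^1\G_0$ produces $g\in\G_0$ with $g^{-1}\overline{g}=c_i$. Since $\varphi(c_i)=\gamma_i$ means $c_i$ acts on $\A$ as $\gamma_i$, we have $\gamma_i^{-1}q=c_i^{-1}q$, whence $\overline{gq}=\overline{g}\,\overline{q}=\overline{g}\,\gamma_i^{-1}q=(gc_i)c_i^{-1}q=gq$; so $gq$ is real, which also yields the final assertion of the theorem. For $(\Rightarrow)$ I would start from a real point $x=g_0q\in\O$ and put $c:=g_0^{-1}\overline{g_0}\in Z^1(\G_0)$. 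From $\overline{x}=x$ one reads off $\overline{q}=c^{-1}q$, so combining with the hypothesis $\overline{q}=\gamma_i^{-1}q$ and a lift $n_i\in N_{\G_0}(\A)$ of $\gamma_i$ (so $n_i^{-1}q=\gamma_i^{-1}q$) gives $n_ic^{-1}q=q$.

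This last step is where the main obstacle lies, and where Theorem \ref{thm:cent} is essential. A priori $c$ is only a cocycle of $\G_0$, and there is no reason for it to normalise $\A$; what has to be shown is precisely that $c\in N_{\G_0}(\A)$. The relation $n_ic^{-1}q=q$ says $n_ic^{-1}\in Z_{\G_0}(q)$, and because $Z_{\G_0}(q)=Z_{\G_0}(\A)=\ker\varphi$ we get $c=(n_ic^{-1})^{-1}n_i\in N_{\G_0}(\A)$ with $\varphi(c)=\gamma_i$; since $N_{\G_0}(\A)$ is $\overline{\cdot}$-stable, $c_i:=c$ is the required cocycle. I expect the $\overline{\cdot}$-stability statements and the bookkeeping of the $\Gamma_H$-action to be entirely routine, so the genuine content is this single move that transports the cocycle from $\G_0$ into $N_{\G_0}(\A)$ via the constancy of the stabilizer on $\A$.
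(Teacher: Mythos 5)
Your proof is correct. There is, however, nothing in the paper to compare it against line by line: the paper does not prove Theorem \ref{th1} at all, but simply defers to \cite[Proposition 4.14]{gl24}. What you have produced is a self-contained derivation from facts the paper does quote, namely the exact sequence $1\to Z_{\G_0}(\A)\to N_{\G_0}(\A)\to \Gamma_H\to 1$ with $\varphi$ surjective and $\ker\varphi = Z_{\G_0}(\A)$ (\cite[Lemma 4.13]{gl24}), the criterion that two points of $\h_H^\circ$ are $G_0$-conjugate iff $\Gamma_H$-conjugate, the triviality of $H^1\G_0$, and Theorem \ref{thm:cent}. The two places where the argument has real content are both handled correctly. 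First, the twisting step in part (1) silently needs the Galois action on $\Gamma_H$ to be trivial (which holds because $W$ acts by real matrices on the real span of $p_1,\ldots,p_7$) and needs $\Gamma_H$ to act on $\A$ with trivial stabilizers (if $wH$ fixes $q\in\h_H^\circ$ then $w\in W_q=H$); these are exactly what turn $\overline{q_0}=\gamma q_0$ into $\gamma^2=1$ and make the substitution $q=\delta^{-1}q_0$ land on $\overline{q}=\gamma_i^{-1}q$. Second, and this is the decisive point you correctly isolate, in part (2)($\Rightarrow$) the cocycle $c=g_0^{-1}\overline{g_0}$ a priori lives only in $\G_0$, and it is the constancy of stabilizers along $\A$ (Theorem \ref{thm:cent}), giving $Z_{\G_0}(q)=Z_{\G_0}(\A)=\ker\varphi$, that forces $c=(n_ic^{-1})^{-1}n_i\in N_{\G_0}(\A)$ with $\varphi(c)=\gamma_i$; together with the routine $\sigma$-stability of $N_{\G_0}(\A)$ this closes the argument. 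This is the natural proof and quite possibly the same as the one in \cite{gl24}; its value here is that it would make the paper self-contained on this point.
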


We note that the element $g\in \G_0$ in the second statement exists because $H^1 \G_0$ is trivial.

In order to use this theorem we have to do a few things:
\begin{enumerate}
\item Compute $H^1 \Gamma_H = \{[\gamma_1],\ldots,[\gamma_r]\}$.
\item For $1\leq i\leq r$ describe the sets $\A_{\gamma_i}=\{q\in \A \mid \overline{q} = \gamma_i^{-1}q
\}$.
\item For each $i$ find a cocycle $c_i\in Z^1 N_{\G_0}(\A)$ with $\varphi(c_i) = \gamma_i$, or decide that no such
cocycle exists.
\item Compute a $g_{\gamma_i}\in \G_0$ such that $g_{\gamma_i}^{-1}\overline{g}_{\gamma_i} = c_i$ and describe the set  $g_{\gamma_i}\cdot \A_{\gamma_i}$. 
\end{enumerate}

Write, as in the theorem, $H^1(\Gamma_H)=\{[\gamma_1],\dots,[\gamma_r]\}$. Then every orbit with real points and a representative in $\A$ has a representative
in $\A_{\gamma_i}$ for some $i$. Furthermore, if $q\in \A_{\gamma_i}$ then $g_{\gamma_i}\cdot q$ is a real point of the orbit of $q$.

We first comment on the steps above. Then we say how we find the real orbits. The first step
is easy: $\Gamma_H$ is a finite group of size bounded by the size of $W$ (which is 2903040).
So we can compute $H^1 \Gamma_H$ by brute force (first listing all cocycles in $\Gamma_H$, then
dividing them into equivalence classes). For the second step we note that $\A$ is an open set
in the subspace $\h_H$. 
Let $q_1,\ldots,q_k$ be a basis of $\h_H$; it is given in Table \ref{tab1}. From that table 
we see that the basis consists of real elements. We now view $\h_H$ as a vector space over
$\R$ of dimension $2k$. We note that the matrix of $\gamma_i$ with respect to the given basis of $\h_H$ has real coefficients.
So we can write $\gamma_i^{-1} \cdot q_j= \sum_{s=1}^k \delta_{sj} q_s$ with $\delta_{sj}\in \R$. Then 
the set of $q\in \h_H$ with $\bar q = \gamma_i^{-1} q$ consists of the linear combinations
$\sum_{j=1}^k (a_j+ib_j) q_j$ where $a_j,b_j\in \R$ and the vectors with coordinates $a_1,\ldots,a_k$ and $b_1,\ldots,b_k$ are
eigenvectors with eigenvalue 1 and -1 respectively of the matrix $(\delta_{sj})$. 
Since $(\delta_{sj})$ is real and $\gamma_i$ is a cocycle it
follows that the matrix is of order 2. Hence the solution space has $\R$-dimension $k$. It also follows that 
$\gamma_i=\gamma_i^{-1}$ and that the solution space is closed under complex conjugation. So we 
can find a basis $q_1',\ldots,q_k'$ of the real space $\{ q\in \h_H \mid \bar q = \gamma_i^{-1} q\}$ and 
$\A_{\gamma_i}$ is an open set in it. 

For the third step we note that even if the cocycle $c_i$ exists it may not be easy to find it because the set $\varphi^{-1}(\gamma_i)$ is not easy to describe.
We used a few ad-hoc methods that are not guaranteed to succeed, but for us they always did. We also remark that if such methods do not succeed in finding
$c_i$ then it is possible to use the more systematic methods of \cite{bor21}. For our first method we let $\delta : N_{\G_0}(\h) \to W$ be the usual 
projection. Let $w_{i}\in N_W(H)$ be such that $w_{i}H = \gamma_i$. Then $\delta^{-1}(w_{i}) \subset \varphi^{-1}(\gamma_i)$. Indeed, let $g\in N_{\G_0}(\h)$ be such that
$\delta(g) = w_{i}$; as $w_i\in N_W(H)$ by \eqref{eq:wmap} we see that $g$ stabilizes $\h_H^\circ$ and acts on it in the same way as $w_i$, so that $g\in \varphi^{-1}(\gamma_i)$. 
Now suppose that $[\gamma_i]$ has $m$ elements $\gamma_i^1,\ldots,\gamma_i^m$ (observe that we computed these in the first step). Let $w_i^j\in W$ be such that $\gamma_i^j = w_i^jH$.
Then for each $i,j$ and for each $w'\in w_i^jH$ we computed an element of $\delta^{-1}(w')$. In the majority of cases at least one of these elements turned out to be a cocycle.
For the remaining cases we succeeded with brute force: taking a general matrix in $g\in\G_0$ whose entries are indeterminates and considering the 
polynomial equations that are equivalent to $g\cdot q_i=\gamma_i\cdot q_i$ for $1\leq i\leq r$. We computed a Gr\"obner basis of the ideal
generated by these polynomials, and looked for a cocycle in the solution set. 

The fourth step can be tackled with algorithms given in \cite{borwdg}. However, in our case the following simple method always worked. 
Let $c'_j=P^{-1}c_jP$ be a diagonal matrix, where $P$ is the invertible matrix whose columns are the eigenvectors of $c_j$. In all cases $P$ turns to be a real matrix, so that we can proceed in the following way. It is a straightforward calculation to find a diagonal matrix $s\in \G_0$ such that $s^{-1}\overline{s}=c'_j$ and such that the determinant of $sP^{-1}$ equals 1, and then $(sP^{-1})^{-1}\overline{sP^{-1}}=Ps^{-1}\overline{s}P^{-1}=Pc'_jP^{-1}=c_j$. So we can set $g_{\gamma_i} = sP^{-1}$. Furthermore $g_{\gamma_i}\cdot \A_{\gamma_i}$ is an open set 
in the subspace spanned by $g_{\gamma_i}\cdot q_1',\ldots, g_{\gamma_i}\cdot q_k'$. 

Let $p\in g_{\gamma_i}\cdot \A_{\gamma_i}$; then $p$ is real. By Theorem \ref{th3} the $\G_0(\R)$-orbits contained in the $\G_0$-orbit of $p$ are in bijection with 
$H^1 Z_{\G_0}(p)$. Furthermore, all elements $q\in \A_{\gamma_i}$ have the same stabilizer in $\G_0$ (Theorem \ref{thm:cent}). Denote this stabilizer by $Z$. Above we have commented on how we determined it. Then 
$Z_{\G_0}(p) = g_{\gamma_i}Zg_{\gamma_i}^{-1}$. 

\begin{example}
Let $H=H_{10}$ then from Table \ref{tab1} we see that $\h_H$ is spanned by $p_1,p_2$. Furthermore, $\A=\h_H^\circ$ consists of $xp_1+yp_2$ with $xy(x^2-y^2)\neq 0$.   
The group $\Gamma_H$ is of order 8 and the actions of two generators on $\h_H$ is given by 
$$\begin{pmatrix} -1 & 0\\ 0&1\end{pmatrix},\, \begin{pmatrix}0&1\\-1&0\end{pmatrix}. $$
These two matrices are real, so the first Galois cohomology of $\Gamma_H$ consists of the conjugacy classes of elements of order dividing 2. Representatives of these classes are
$$\begin{pmatrix} 1&0\\0&1\end{pmatrix},\, \begin{pmatrix} -1&0\\0&-1\end{pmatrix},\, \begin{pmatrix} -1&0\\0&1 \end{pmatrix},\, \begin{pmatrix} 0&-1\\ -1&0\end{pmatrix}.$$
Denoting these elements by $\gamma_1,\ldots,\gamma_4$ we have that the real spaces $\{ q\in \h_H \mid \bar q = \gamma_i^{-1} q\}$ are spanned by respectively $\{p_1,p_2\}$, 
$\{ip_1,ip_2\}$, $\{ip_1,p_2\}$, $\{p_1-p_2,i(p_1+p_2)\}$. 

Let us consider the third case. We have that $\A_{\gamma_3}$ consists of $aip_1+bp_2$ with $a,b\in \R$ and $ab(a^2+b^2)\neq 0$. Since $a,b\in \R$ the latter is equivalent to $ab\neq 0$.
Set 
$$c_3 = \diag(1,-1,1,1,1,1,1,-1),\, g_3 = \diag(1,i,1,1,1,1,1,-i).$$
Then $c_3,g_3\in \G_0$, $c_3$ is a cocycle inducing the action of $\gamma_3$ on $\h_H$ and $g_3^{-1} \bar g_3 = c_3$. In fact, 
$$g_3\cdot ip_1 = -e_{1234}+e_{5678},\, g_3\cdot p_2 = e_{1357}+e_{2468}.$$
Write $q_1'=e_{1234}-e_{5678}$, $q_2'=e_{1357}+e_{2468}$. Then $g_3\cdot \A_{\gamma_3}$ consists of $aq_1'+bq_2'$ with $a,b\in \R$ and $ab\neq 0$.
Let $Z = Z_{\G_0}(q)$ with $q\in \A_{\gamma_3}$. Then $Z$ is of type $4A_1+\mathfrak{t}(1)$ and has four components. We have that $Z_{\G_0}(q')$ with $q'\in g_3\cdot \A_{\gamma_3}$ is 
$g_3Zg_3^{-1}$. We computed its Galois cohomology with the algorithms of \cite{borwdg}. The first Galois cohomology set has three elements. One is the class
of the identity leading to the orbits with representatives $aq_1'+bq_2'$ with $ab\neq 0$. Set 
$$h_2= \begin{pmatrix}
\tfrac{5}{4}+\tfrac{3}{4}i & 0 & 0 & 0 & \tfrac{3}{4}+\tfrac{5}{4}i & 0 & 0 & 0 \\ 
  0 & \tfrac{1}{2}-\tfrac{1}{2}i & 0 & 0 & 0 & -\tfrac{1}{2}-\tfrac{1}{2}i & 0 & 0 \\ 
  0 & 0 & -\tfrac{1}{2}i & 0 & 0 & 0 & -\tfrac{1}{2} & 0 \\ 
  0 & 0 & 0 & \tfrac{1}{2}-\tfrac{1}{2}i & 0 & 0 & 0 & -\tfrac{1}{2}-\tfrac{1}{2}i \\ 
  \tfrac{3}{4}+\tfrac{5}{4}i & 0 & 0 & 0 & \tfrac{5}{4}+\tfrac{3}{4}i & 0 & 0 & 0 \\ 
  0 & -\tfrac{1}{2}-\tfrac{1}{2}i & 0 & 0 & 0 & \tfrac{1}{2}-\tfrac{1}{2}i & 0 & 0 \\ 
  0 & 0 & -\tfrac{1}{2} & 0 & 0 & 0 & -\tfrac{1}{2}i & 0 \\ 
  0 & 0 & 0 & -\tfrac{1}{2}-\tfrac{1}{2}i & 0 & 0 & 0 & \tfrac{1}{2}-\tfrac{1}{2}i \end{pmatrix}.$$
The second element of  $H^1 Z_{\G_0}(q')$ is $h_2^{-1}\bar h_2$. Set $q_1''= -8h_2 \cdot q_1'$ and $q_2''=-h_2\cdot q_2'$ (here the coefficients $-8$ and $-1$ have been added 
to make the formulas below come out a bit nicer). Then
\begin{align*}
q_1'' &= 5e_{1234}+3e_{1238}-3e_{1247}-5e_{1278}+3e_{1346}+5e_{1368}-5e_{1467}-3e_{1678}-3e_{2345}\\
&\phantom{=} +5e_{2358}-5e_{2457}+3e_{2578}+5e_{3456}-3e_{3568}+3e_{4567}-5e_{5678}\\
q_2'' &= e_{1357}+e_{2468}.
\end{align*}
The third element of the first Galois cohomology set is $h_3^{-1}\bar h_3$. We do not report $h_3$ here, but if we set $q_1''' = h_3\cdot q_1'$ and $q_2'''= 16h_3\cdot q_2'$ then
\begin{align*}
q_1''' &=   4e_{1234}-\tfrac{1}{4} e_{5678} \\
q_2''' &=  17e_{1357}-15e_{1358}-15e_{1367}  -17e_{1368}-15e_{1457}-17e_{1458}  -17e_{1467}+15e_{1468}\\
&\phantom{=} +15e_{2357} -17e_{2358}-17e_{2367}-15e_{2368}-17e_{2457}-15e_{2458}-15e_{2467}+17e_{2468}.
\end{align*}
We conclude that the $\G_0$-orbit of $aip_1+bp_2$ with $a,b\in \R$, $ab\neq 0$ has three real $\G_0(\R)$-orbits with representatives $-aq_1'+bq_2'$, $\tfrac{1}{8}aq_1''-bq_2''$, $-aq_1'''
+\tfrac{1}{16}bq_3'''$. In particular, the element $\gamma_3$ corresponds to three classes of real orbits and the elements of each class share the same stabilizer in 
$\G_0(\R)$. The elements $\gamma_1$, $\gamma_2$, $\gamma_4$ yield 6,6,1 classes respectively. We conclude that the single complex class $\h_H^\circ$ gives rise to 16 classes
of real orbits. 
\end{example}

\begin{rmk}\label{rem:phinusemsim}
Now we show how to determine the permutations of the semisimple orbits induced by the maps
$\nu$ and $\varphi$ from Section \ref{sec:outer}. We use the notation from
above and consider the semisimple orbits that lie in the complex orbits of the
sets $g_{\gamma_j}\cdot \A_{\gamma_j}$ for $1\leq j\leq r$.

As seen in Section \ref{sec:outer} $\nu$ maps the real orbits contained in
the $\G_0$-orbit of $v\in \bigwedge^4 \C^8$ to the real orbits contained in
the $\G_0$-orbit of $iv$. 
It is often the case that $\A_{\gamma_l} = i \A_{\gamma_j}$ for certain $l,j$.
In that case we have that the map $\nu$ from Section \ref{sec:outer}
maps the $\SL(8,\C)$-orbits of an element in $\A_{\gamma_j}$ to the
$\SL(8,\C)$-orbits of an element of $\A_{\gamma_l}$.

As $\varphi$ is the identity on the Cartan subspace $\h$, 
it follows that $\varphi$ maps each semisimple complex
orbit to itself. Consider an element of a set $\A_{\gamma_j}$. We can compute
the action of 
$\varphi$ on the real orbits contained in the complex orbit of a $u\in \A_{\gamma_j}$ as follows.
Let $h=g_{\gamma_j}$ then $h\cdot u$ is real. Let $H^1 Z_{\G_0}(h\cdot u)=\{[c_1],\ldots,[c_r]\}$ with $c_i\in \G_0$ such that $c_i\sigma(c_i) = 1$. Let $g_i\in \G_0$ be 
such that $g_i^{-1}\sigma(g_i) = c_i$; then $g_1h\cdot u,\ldots,g_rh\cdot u$ are representatives 
of the real $\G_0(\R)$-orbits in the $\G_0$-orbit of $u$. Applying $\varphi$ we get 
$\varphi(g_kh\cdot u) = \varphi(g_kh) h^{-1} \cdot hu$. Write $a_k = \varphi(g_kh)h^{-1}$ and
$b_k = a_k^{-1}\sigma(a_k)$. Then the orbit of $\varphi(g_kh\cdot u)$ corresponds to the cocycle
$b_k$. We can compute $l$ such that $b_k$ is equivalent to $c_l$ in $Z^1 Z_{\G_0}(h\cdot u)$.
Then $\varphi$ maps the orbit with representative $g_kh\cdot u$ to the orbit with representative
$g_lh\cdot u$.
\end{rmk}

\begin{rmk}
The numbers contained in the sixth column of Table \ref{tab1} often display
a certain symmetry. This is explained by the previous remark.
Consider the sets
$\A_{\gamma_1},\ldots, \A_{\gamma_r}$. Then the list in the sixth column is
$(k_1,\ldots,k_r)$. This means that the $\G_0$-orbit of an element in
$\A_{\gamma_j}$ contains $k_j$ real $\SL(8,\R)$-orbits for $1\leq j\leq r$.
If $\A_{\gamma_l} = i \A_{\gamma_j}$ for certain $l,j$
then $\nu$ maps the $\G_0$-orbit of an element in $\A_{\gamma_j}$ to the
$\G_0$-orbit of an element of $\A_{\gamma_l}$. So since $\nu$ also maps
real orbits to real orbits we have $k_l=k_j$. In many cases  we have that
multiplication by $i$ permutes the sets $\A_{\gamma_j}$. So if that happens we
see that the list $(k_1,\ldots,k_r)$ has a nontrivial symmetry of order 2. 
\end{rmk}

\section{The Cartan subspaces in $\g_1^\R$}\label{sec3}

A Cartan subspace in $\g_1^\R$ is a maximal abelian subspace consisting of semisimple elements. 
In this section we classify the Cartan subspaces in $\g_1^\R$. Let $\c$ be a subspace of 
$\g_1^\R$ and let $\c^\C$ be the subspace of $\g_1$ spanned by $\c$.
From \cite{gl24} we recall that $\c$ is a Cartan subspace of $\g_1^\R$ if and only if 
$\c^\C$ is a Cartan subspace of $\g_1$. 

For the remainder of this section we set $Z=Z_{\G_0}(\h)$ and $N=N_{\G_0}(\h)$. By 
\cite[Theorem 4.7]{gl24} and the fact that $H^1 \G_0 = 1$ we have the following theorem.

\begin{theorem}
There is a bijection between the elements of $H^1 N$ and the Cartan subspaces in $\g_1^\R$. 
More precisely, let $[c]\in H^1 N$ and let $g\in \G_0$ be such that $g^{-1}\bar g = c$; then
$[c]$ corresponds to the Cartan subspace $g\cdot \h$ in $\g_1^\R$.
\end{theorem}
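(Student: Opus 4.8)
The plan is to realize this statement as a direct instance of the Galois-cohomology correspondence of Theorem \ref{th3}, taking $\cG = \G_0$ acting on the set $X$ of all complex Cartan subspaces of $\g_1$, with $\sigma$ the complex conjugation of Section \ref{sec:realf}. First I would verify that this action is $\sigma$-compatible, i.e. $\sigma(g\cdot \c^\C) = \sigma(g)\cdot \sigma(\c^\C)$, which is immediate from the way $\sigma$ is defined on $\G_0$ and on $\g_1$. The natural base point is $x_0 = \h$: since $\h$ is spanned by the real fourvectors $p_1,\ldots,p_7$, it satisfies $\sigma(\h)=\h$, so $\h\in X^\sigma$. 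Its stabilizer for the action on subspaces is exactly $N = N_{\G_0}(\h)$, because $g$ preserves the subspace $\h$ as a whole precisely when $g\in N_{\G_0}(\h)$; this is the point where one must take the normalizer $N$ rather than the centralizer $Z$.

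Next I would identify the set that Theorem \ref{th3} actually classifies. By Vinberg's theorem (\cite[Theorem 1]{vinberg}) any two complex Cartan subspaces of $\g_1$ are $\G_0$-conjugate, so the orbit $Y = \G_0\cdot \h$ is all of $X$. Using the equivalence recalled at the start of this section (from \cite{gl24}), a real subspace $\c\subset \g_1^\R$ is a Cartan subspace if and only if $\c^\C$ is one; hence a $\sigma$-stable complex Cartan subspace is precisely the complexification of a real Cartan subspace in $\g_1^\R$, and taking $\sigma$-fixed points and complexifying are mutually inverse. Under this identification the $\G_0(\R)$-orbits on $Y^\sigma$ correspond to the $\G_0(\R)$-conjugacy classes of Cartan subspaces in $\g_1^\R$ (which is the correct reading of ``the Cartan subspaces in $\g_1^\R$'', since $H^1 N$ is finite while the Cartan subspaces themselves are not).

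With these identifications the conclusion is formal. Since $\cG^\sigma = \G_0^\sigma = \G_0(\R)$, Theorem \ref{th3} gives a bijection between the $\G_0(\R)$-orbits in $Y^\sigma$ and $\ker(i_*)$, where $i_* : H^1(N,\sigma)\to H^1(\G_0,\sigma)$ is induced by the inclusion, and the class $[c]$ corresponds to the orbit of $g\cdot\h$ for any $g$ with $g^{-1}\sigma(g)=c$. The last input is that $H^1 \G_0 = 1$, which forces $\ker(i_*)$ to be all of $H^1 N$; this upgrades the correspondence to the desired bijection between $H^1 N$ and the Cartan subspaces, with the explicit description $[c]\mapsto g\cdot\h$. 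This collapse of the kernel is exactly what is packaged by \cite[Theorem 4.7]{gl24}, so in the write-up I would simply invoke that theorem together with $H^1\G_0=1$.

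I do not expect a genuine obstacle here: the cohomological step is a plug-in. The only part requiring care is the bookkeeping in the middle paragraph, namely confirming that the base-point stabilizer is $N$ and not $Z$, and that the passage between real Cartan subspaces and their $\sigma$-stable complexifications is a bijection compatible both with the $\G_0(\R)$-action and with the property of being a Cartan subspace. Once that dictionary is checked, everything else follows from Theorem \ref{th3} and the triviality of $H^1\G_0$.
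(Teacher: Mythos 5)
Your proposal is correct and follows essentially the same route as the paper: the paper proves this theorem by citing \cite[Theorem 4.7]{gl24} together with the triviality of $H^1\G_0$, and your argument (Theorem \ref{th3} applied to the $\G_0$-action on complex Cartan subspaces with base point $\h$, stabilizer $N$, Vinberg's conjugacy theorem to identify the orbit with all of $X$, and the real/complex Cartan subspace dictionary from \cite{gl24}) is precisely the argument that citation packages, as you yourself note. Your bookkeeping — the stabilizer being $N$ rather than $Z$, and reading the statement as conjugacy classes of Cartan subspaces — matches the paper's intent, so there is nothing to correct.
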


It follows that we need to determine $H^1 N$. The group $N$ is finite but very large (below we show that it is of order $2^8\cdot 2903040$), so we cannot compute $H^1N$ by brute force.
Therefore we use the strategy outlined in  Section \ref{sec1} using the exact sequence
$$ 1\to Z \to N \labelto{j} W\to 1$$
where $W=N/Z$ which is equal to the Weyl group of the root system of $\g$ with respect to $\h$. So we know $W$, and now we outline how we computed $Z$. 
First we observe that $G_0 \subset G^\theta = \{ g\in G \mid g\theta = \theta g\}$ and both groups have the same Lie algebra (which is $\g_0$).
Set $H= Z_G(\h)$ then $H$ is a connected torus and since its Lie algebra is $\h$, which is contained in $\g_1$, the intersection of $H$ with $G^\theta$
is a finite group. We first compute this intersection. Let $x_1,\ldots,x_7$ be the positive simple root vectors in a fixed canonical generating set of $\g$ relative to the 
root system of $\g$ with respect to $\h$ (cf. Section \ref{sec:cmp}). Define the elements $w_i(t) = \exp( t\ad x_i ) \exp(-t^{-1}\ad x_i)\exp( t\ad x_i )$, $h_i(t)=w_i(t)w_i(1)^{-1}$
where $t\in \C^*$. Then $H=\{h_1(t_1)\cdot\dots\cdot h_7(t_7)\mid t_i \in\mathbb{C}^*\}$ (cf. \cite[Chapter 3, Lemma 28]{steinberg}).  So we can write an arbitrary element of $H$ as a $133\times 133$ matrix depending on seven parameters $t_1,\dots,t_7$.
Then the condition that an element of $H$ commutes with $\theta$ translates to polynomial equations in $t_1,\ldots,t_7$. We treat $t_1,\ldots,t_7$ as indeterminates and solve these equations by computing a Gr\"obner basis and
find the set $H\cap G^\theta$ consisting of 128 elements. Now with the algorithm {\sf IsEltOf} (see Section \ref{sec:cmp}) we find the subset of elements that lie in $G_0$; it has 64
elements. Now we consider the map $\psi : \G_0\to G_0$ (see Section \ref{sec:prelim}) and observe that $\psi(Z) = H\cap G_0$. For $g\in H\cap G_0$ we compute a preimage $\psi^{-1}(g)$
in the following way. By the algorithm of \cite{cmt} (see also \cite[\S 5.7]{gra16}) we can write $g=\exp( \ad z_1 )\cdot \ldots \cdot\exp(\ad z_s)$ where $z_1,\dots,z_j\in\mathfrak{g}_0$ are 
nilpotent. Let $\hat z_i\in \hat\g_0$ be such that $\psi(\hat z_i) = z_i$. Then $\exp(\hat z_1)\cdot\ldots\cdot \exp(\hat z_s)$ is a preimage of $g$. So we get 64 preimages. To this set we add the
kernel of $\psi$ which is of order 4 and generated by the $8\times 8$ diagonal matrix with $i$ on the diagonal. As a result we find the group $Z$ of order $2^8$.

Thanks to the results described in Section \ref{sec1}, we were able to develop an algorithm for computing $H^1N$, which we summarize in the following steps:
\begin{enumerate}
    \item Compute $H^1 W$. Since the complex conjugation acts trivially on $W$, the cohomology classes coincide with the conjugacy classes of elements of order 2 in $W$. These 
    are easily calculated by {\sf GAP}. In particular, it turns out that $H^1 W$ has exactly ten elements, and we denote fixed representatives of the classes in $H^1 W$ by 
    $\pi_{i}$ for $1\le i\le 10$.
    \item Determine cocycles $n_i\in N$ such that $j(n_i)=\pi_i$; note that $j^{-1}(\pi_i)$ has $2^8$ elements, and so for each $j^{-1}(\pi_i)$ we easily find such a cocycle; in particular, we can pick cocycles represented by diagonal matrices. 
    \item For each $i$ define $\tau_i: N\mapsto N$ with $\tau_i(n)=n_i\bar n n_i^{-1}$. This clearly induces a map on $W$ which we denote again by $\tau_i$, by abuse of notation, such that $\tau_i(w)=\pi_iw\pi_i^{-1}$.
    \item By direct brute force computation we compute $H^1(Z,\tau_i)$ and $W^{\tau_i}=\{w\in W\mid \tau_i(w)=w\}$.
    \item Observe that $W^{\tau_i}$ acts on $H^1(Z,\tau_i)$ as described in (\ref{eq1}) and, in particular, it turns out (again by direct computation) that the action is transitive. This implies, 
    thanks to Proposition \ref{prop1}, that $j_*^{-1}([\pi_i])=\{[n_i]\}$.
    \item We conclude that $H^1 N = \{ [n_1],\ldots, [n_{10}] \} $.
\end{enumerate}

Proceeding as in Section \ref{sec2}, for each $i$ we compute $g_i\in \G_0$ with $g_i^{-1} \bar g_i = n_i$. Then $g_i\cdot \h$ is the Cartan subspace corresponding to $[n_i]$.
It follows that $\g_1^\R$ has ten Cartan subspaces up to conjugacy. 

We consider computing the action of the map $\nu$ on the set of Cartan subspaces. 
It turns out that the $n_i$ and $g_i$ can be chosen so that they are diagonal matrices.
Hence $\nu(g_i\cdot \h)= g_0g_ig_0^{-1} g_1^{-1}g_1 g_0\cdot \h = g_i g_1^{-1}\cdot 
i\h = g_ig_1^{-1}\cdot \h$ (as $i\h = \h$). Write $h_i = g_ig_1^{-1}$, then, as all involved
matrices commute, $h_i^{-1} \bar h_i = g_i^{-1}\bar g_i g_1 \bar g_1^{-1}=n_ig_1\bar g_1^{-1}$.
But $g_1\bar g_1^{-1} = \diag(\omega^2,\ldots,\omega^2)$ (notation as in Section \ref{sec:outer}).
Hence $g_1\bar g_1^{-1} \cdot u = -u$ for all $u\in \bigwedge^4 \C^8$. Now the longest element
$w_0$ of the Weyl group acts as $-1$ on $\h$. It follows that the restriction of $h_1^{-1}
\bar h_i$ to $\h$ is equal to $\pi_i w_0$. So the cocycle corresponding to $\nu(g_i\cdot \h)$
lies in the conjugacy class of $\pi_iw_0$. A computation shows that the conjugacy classes of 
$\pi_i$ and $\pi_iw_0$ are always different. Hence up to the action of $\nu$ only 5 Cartan subspaces remain.
Their bases are as follows:

\begin{enumerate} 
\item $e_{1234}+e_{5678}, e_{1357}+e_{2468}, e_{1256}+e_{3478}, e_{1368}+e_{2457}, e_{1458}+e_{2367}, e_{1467}+e_{2358}, e_{1278}+e_{3456}$
%\item $e_{1234}-32e_{5678},e_{1357}-32e_{2468},e_{1256}-32e_{3478},e_{1368}-32e_{2457},e_{1458}-32e_{2367},e_{1467}-32e_{2358},e_{1278}-32e_{3456}$
\item $e_{1257}-4e_{3468},e_{1358}-4e_{2467},e_{1268}-4e_{3457},e_{1456}-4e_{2378},e_{1478}-4e_{2356},e_{1367}-4e_{2458},e_{1234}+16e_{5678}$
%\item $e_{1256}+64e_{3478},e_{1234}+64e_{5678},3e_{1357}+64e_{2468},e_{1278}-128e_{3456},3e_{1368}+64e_{2457},3e_{1467}+64e_{2358},3e_{1458}+64e_{2367}$
\item $e_{1234}+e_{5678},e_{1257}-e_{3468},e_{1356}+e_{2478},e_{1268}-e_{3457},e_{1458}-e_{2367},e_{1467}-e_{2358},e_{1378}+e_{2456}$
%\item $e_{1345}+192e_{2678},e_{1367}+192e_{2458},e_{1578}+192e_{2346},e_{1238}-128e_{4567},e_{1247}-128e_{3568},e_{1468}+192e_{2357},e_{1256}-128e_{3478}$
\item $e_{1256}+128e_{3478},e_{1368}+128e_{2457},e_{1357}+128e_{2468},e_{1467}+128e_{2358},e_{1234}-128e_{5678},e_{1278}+128e_{3456},e_{1458}+128e_{2367}$
%\item $e_{1234}+16e_{5678},e_{1357}-4e_{2468},e_{1256}-4e_{3478},e_{1368}-4e_{2457},e_{1458}-4e_{2367},e_{1467}-4e_{2358},e_{1278}-4e_{3456}$
\item $e_{1256}+128e_{3478},e_{1678}-192e_{2345},e_{1458}+128e_{2367},e_{1346}+192e_{2578},e_{1238}+192e_{4567},e_{1247}+192e_{3568},e_{1357}+128e_{2468}$
%\item $e_{1258}+4e_{3467},e_{1368}+4e_{2457},e_{1456}+4e_{2378},e_{1478}-4e_{2356},e_{1267}-4e_{3458},e_{1234}-16e_{5678},e_{1357}-4e_{2468}$
\end{enumerate}

\section*{Appendix}\label{sec:app}

In this appendix we describe the classes $\h_H^\circ$ of semisimple orbits. Excluding the class consisting only of
0, there are 31 such classes, numbered $2,\ldots,32$. In each case we use the basis $u_1,\ldots,u_k$
given in Table \ref{tab1}. In each case $\h_H^\circ$ consists of $\lambda_1u_1+\cdots +\lambda_k
u_k$ where the coefficients $\lambda_i$ are such that certain polynomials in the $\lambda_i$ are nonzero.
We give these polynomials, along with generators and the size of the group $\Gamma_H$. 

The classes 2-9 are all 1-dimensional. Let $q$ be a basis element. Then in each case
$\h_H^\circ$ consists of $\mu q$ with $\mu\neq 0$ and $\Gamma_H=\{1,-1\}$.

\begin{enumerate}
\item[Class 10] Generators of $\Gamma_H$:
$$\begin{pmatrix} -1 & 0\\ 0&1\end{pmatrix},\, \begin{pmatrix}0&1\\-1&0\end{pmatrix}. $$
Order of $\Gamma_H$: 8. Polynomial conditions:
$$\lambda_1\lambda_2(\lambda_1^2-\lambda_2^2)\neq 0.$$
\item[Class 11] Generators of $\Gamma_H$:
$$\begin{pmatrix} -1 & 0\\ 0&1\end{pmatrix},\, \begin{pmatrix}1&0\\0&-1\end{pmatrix}. $$
Order of $\Gamma_H$: 4. Polynomial conditions:
$$\lambda_1\lambda_2(\lambda_1^2-\lambda_2^2)\neq 0.$$
\item[Class 12] Generators of $\Gamma_H$:
$$\begin{pmatrix} -1 & 0\\ 0&1\end{pmatrix},\, \tfrac{1}{2}\begin{pmatrix}-1&-3
\\-1&1\end{pmatrix}. $$
Order of $\Gamma_H$: 12. Polynomial conditions:
$$\lambda_1\lambda_2(\lambda_1^2-\lambda_2^2)(\lambda_1^2-9\lambda_2)^2\neq 0.$$
\item[Class 13] Generators of $\Gamma_H$:
$$\begin{pmatrix} -1 & 0\\ 0&-1\end{pmatrix}. $$
Order of $\Gamma_H$: 2. Polynomial conditions:
$$\lambda_1\lambda_2(\lambda_1^2-4\lambda_2)^2\neq 0.$$
\item[Class 14] Generators of $\Gamma_H$:
$$\begin{pmatrix} 1 & 0\\ 3&-1\end{pmatrix},\, \begin{pmatrix}-1&0
\\-3&1\end{pmatrix}. $$
Order of $\Gamma_H$: 4. Polynomial conditions:
$$\lambda_1\lambda_2(3\lambda_1-2\lambda_2)(\lambda_1-\lambda_2)(2\lambda_1-\lambda_2)(3\lambda_1-\lambda_2)\neq 0.$$
\item[Class 15] Generators of $\Gamma_H$:
$$\begin{pmatrix} -1 & 0\\ 0&1\end{pmatrix},\, \begin{pmatrix}1&0\\0&-1\end{pmatrix}. $$
Order of $\Gamma_H$: 4. Polynomial conditions:
$$\lambda_1\lambda_2(\lambda_1^2-4\lambda_2^2)(\lambda_1^2-16\lambda_2^2)\neq 0.$$
\item[Class 16] Generators of $\Gamma_H$:
$$\begin{pmatrix} -1 & 0\\ 0&-1\end{pmatrix},\, \begin{pmatrix}1&0\\2&-1\end{pmatrix}. $$
Order of $\Gamma_H$: 4. Polynomial conditions:
$$\lambda_1\lambda_2(2\lambda_1-\lambda_2)(3\lambda_1-2\lambda_2)(\lambda_1-2\lambda_2)(\lambda_1-\lambda_2)\neq 0.$$
\item[Class 17] Generators of $\Gamma_H$:
$$\begin{pmatrix} 0 & 1\\ 1&0\end{pmatrix},\, \begin{pmatrix}0&1\\-1&1\end{pmatrix}. $$
Order of $\Gamma_H$: 12. Polynomial conditions:
$$\lambda_1\lambda_2(\lambda_1^2-\lambda_2^2)(\lambda_1-2\lambda_2)(2\lambda_1-\lambda_2)\neq 0.$$
\item[Class 18] Generators of $\Gamma_H$:
$$\begin{pmatrix} 0 & 0 & -1\\ 1&0&0 \\ 0&1&0\end{pmatrix},\, \begin{pmatrix}0&-1&0
\\1&0&0\\ 0&0&-1\end{pmatrix}. $$
Order of $\Gamma_H$: 48. Polynomial conditions:
$$\lambda_1\lambda_2\lambda_3(\lambda_1^2-\lambda_2^2)(\lambda_1^2-\lambda_3^2)(\lambda_2^2-
\lambda_3^2)\neq 0.$$
\item[Class 19] Generators of $\Gamma_H$:
$$\begin{pmatrix} -1 & 0 & 0\\ 0&1&0 \\ 0&0&-1\end{pmatrix},\, \tfrac{1}{2}\begin{pmatrix}-1&-1&-2
\\1&1&-2\\ -1&1&0\end{pmatrix}. $$
Order of $\Gamma_H$: 12. Polynomial conditions:
$$\lambda_1\lambda_2\lambda_3(\lambda_1^2-\lambda_2^2)(\lambda_1+\lambda_3)\neq 0,
\lambda_1+\epsilon\lambda_2+2\delta\lambda_3\neq 0 \text{ where } \epsilon,\delta \in \{1,-1\}.$$
\item[Class 20] Generators of $\Gamma_H$:
$$\begin{pmatrix} 0 & -1 & 0\\ -1&0&0 \\ -1&-1&1\end{pmatrix},\,\begin{pmatrix}
0&0&1\\1&1&-1\\ 1&0&0\end{pmatrix},\, \begin{pmatrix} 1&1&-1\\0&0&1\\ 0&1&0
\end{pmatrix}.$$
Order of $\Gamma_H$: 8. Polynomial conditions:
\begin{align*}
&\lambda_1\lambda_2\lambda_3(\lambda_2-\lambda_3)(\lambda_1^2-\lambda_3^2)(\lambda_1+\lambda_2)\neq 0,\\
&(\lambda_1+\lambda_2-\lambda_3)(3\lambda_1+2\lambda_2-\lambda_3)\neq 0, \,
\lambda_3 \neq \pm (\lambda_1+2\lambda_2).
\end{align*}
\item[Class 21] Generators of $\Gamma_H$:
$$\begin{pmatrix} -1 & 0 & 0\\ 0&1&0 \\ 0&0&1\end{pmatrix},\,\begin{pmatrix}
1&0&0\\0&0&1\\ 0&1&0\end{pmatrix},\, \tfrac{1}{2}\begin{pmatrix} 1&3&-1\\1&-1&1\\ 0&0&1
\end{pmatrix}.$$
Order of $\Gamma_H$: 48. Polynomial conditions:
$$
\lambda_1\lambda_2\lambda_3(\lambda_2^2-\lambda_3^2)\neq 0,\,
\lambda_1-\delta(3\lambda_2-\lambda_3)\neq 0,\,
\lambda_1-\delta(\lambda_2-3\lambda_3)\neq 0,\,
\lambda_1+\delta\lambda_2+\epsilon\lambda_3\neq 0,$$
where $\delta,\epsilon\in \{1,-1\}$.
\item[Class 22] Generators of $\Gamma_H$:
$$\begin{pmatrix} 1 & 0 & 0\\ 0&1&0 \\ 0&0&-1\end{pmatrix},\,\begin{pmatrix}
1&0&0\\2&-1&0\\ 0&0&1\end{pmatrix},\, \tfrac{1}{2}\begin{pmatrix} 1&0&1\\-1&2&1\\ 3&0&-1
\end{pmatrix}.$$
Order of $\Gamma_H$: 24. Polynomial conditions:
$$
\lambda_1\lambda_2\lambda_3(\lambda_1^2-\lambda_3^2)(9\lambda_1^2-\lambda_3^2)
(\lambda_1-\lambda_2)(2\lambda_1-\lambda_2)\neq 0,\,
\lambda_1-2\lambda_2+\delta\lambda_3\neq 0,\,
3\lambda_1-2\lambda_2+\delta\lambda_3\neq 0,$$
where $\delta\in \{1,-1\}$.
\item[Class 23] Generators of $\Gamma_H$:
$$\begin{pmatrix} 1 & 0 & 0\\ 0&1&0 \\ 0&0&-1\end{pmatrix},\,\begin{pmatrix}
1&-1&0\\0&-1&0\\ 0&0&1\end{pmatrix},\, \begin{pmatrix} -1&1&0\\0&1&0\\ 0&0&1
\end{pmatrix}.$$
Order of $\Gamma_H$: 8. Polynomial conditions:
$$
\lambda_1\lambda_2\lambda_3(\lambda_1^2-\lambda_3^2)
(\lambda_1-\lambda_2)(2\lambda_1-\lambda_2)\neq 0,\,
\lambda_1-2\lambda_2+\delta\lambda_3\neq 0,\,
\lambda_1+\delta \lambda_2+\epsilon \lambda_3\neq 0,$$
where $\epsilon,\delta\in \{1,-1\}$.
\item[Class 24] Generators of $\Gamma_H$:
$$\begin{pmatrix} 1 & 0 & 0\\ 0&1&0 \\ 0&0&-1\end{pmatrix},\,\begin{pmatrix}
1&0&0\\0&0&1\\ 0&1&0\end{pmatrix},\, \begin{pmatrix} 0&1&0\\1&0&0\\ 0&0&1
\end{pmatrix}.$$
Order of $\Gamma_H$: 48. Polynomial conditions:
$$
\lambda_1\lambda_2\lambda_3(\lambda_1^2-\lambda_2^2)
(\lambda_1^2-\lambda_3^2)(\lambda_2^2-\lambda_3^2)\neq 0,\,
\lambda_1+\delta \lambda_2+\epsilon \lambda_3\neq 0,$$
where $\epsilon,\delta\in \{1,-1\}$.
\item[Class 25] Generators of $\Gamma_H$:
$$\begin{pmatrix} 0&0&0&-1\\ 0&1&0&0\\ -1&0&1&-1\\ -1&0&0&0\end{pmatrix},\,
\begin{pmatrix} 0&0&1&0\\ 0&1&0&0\\ 1&0&0&0\\ 1&0&-1&1\end{pmatrix},\,
\tfrac{1}{2}\begin{pmatrix} 1&-1&-1&2\\ -1&1&-1&2\\ 0&0&2&0\\ 1&1&1&0\end{pmatrix},\,
\tfrac{1}{2}\begin{pmatrix} 1&-1&1&0\\-2&0&2&0\\1&1&1&0\\1&1&-1&2\end{pmatrix}.$$
Order of $\Gamma_H$: 96. Polynomial conditions:
\begin{align*}
&\lambda_1\lambda_2\lambda_3\lambda_4(\lambda_1-\lambda_3)
(\lambda_1+\lambda_4)(\lambda_3-\lambda_4)(\lambda_1-\lambda_3+\lambda_4)\neq 0,\,\\
& \lambda_1+\delta \lambda_2+\epsilon \lambda_3\neq 0,\,
\lambda_1+\delta\lambda_2-3\lambda_3+2\lambda_4\neq 0,\,
\lambda_1+\delta \lambda_2 +\epsilon (\lambda_3-2\lambda_4)\neq 0,
\end{align*}
where $\epsilon,\delta\in \{1,-1\}$.
\item[Class 26] Generators of $\Gamma_H$:
$$\tfrac{1}{2}
\begin{pmatrix}1&-1&-1&-1 \\-1&1&-1&-1\\-1&-1&1&-1\\-1&-1&-1&1\end{pmatrix},\,
\tfrac{1}{2}
\begin{pmatrix}-1&1&1&-1\\1&-1&1&-1\\1&1&-1&-1\\-1&-1&-1&-1\end{pmatrix},\,
\begin{pmatrix}0&1&0&0 \\ 1&0&0&0 \\ 0&0&0&1 \\ 0&0&1&0\end{pmatrix}.$$
Order of $\Gamma_H$: 48. Polynomial conditions:
\begin{align*}
&\lambda_1\lambda_2\lambda_3\lambda_4(\lambda_1-\lambda_2)
(\lambda_2^2-\lambda_4^2)(\lambda_1^2-\lambda_3^2)\neq 0,\,\\
& \lambda_1+\delta \lambda_2+\epsilon \lambda_3+\nu\lambda_4\neq 0,\,
\lambda_1-2\lambda_2+\delta \lambda_3\neq 0,\,
2\lambda_1-\lambda_2 +\delta\lambda_4\neq 0,
\end{align*}
where $\epsilon,\delta,\nu\in \{1,-1\}$.
\item[Class 27] Generators of $\Gamma_H$:
$$\tfrac{1}{2}
\begin{pmatrix} -1&-1&-1&-1 \\ -1&1&-1&1 \\
      1&-1&-1&1 \\ 1&1&-1&-1 \end{pmatrix},\,
\tfrac{1}{2}
\begin{pmatrix} -1&1&1&1 \\ 1&-1&1&1 \\ 1&1&1&-1 \\-1&-1&1&-1\end{pmatrix}.$$
Order of $\Gamma_H$: 1152. Polynomial conditions:
\begin{align*}
&\lambda_1\lambda_2\lambda_3\lambda_4(\lambda_1^2-\lambda_2^2)
(\lambda_1^2-\lambda_3^2)(\lambda_1^2-\lambda_4^2)(\lambda_2^2-\lambda_3^2)
(\lambda_2^2-\lambda_4^2)(\lambda_3^2-\lambda_4^2)\neq 0,\,\\
& \lambda_1+\delta \lambda_2+\epsilon \lambda_3+\nu\lambda_4\neq 0,
\end{align*}
where $\epsilon,\delta,\nu\in \{1,-1\}$.
\item[Class 28] Generators of $\Gamma_H$:
$$\tfrac{1}{2}
\begin{pmatrix}
-2&-1&1&1\\ 0&0&-2&-2\\ -2&1&1&1\\0&-1&1&-1 \end{pmatrix},\,
\begin{pmatrix}0&0&0&-1\\0&-1&0&0\\1&0&-1&-1\\-1&0&0&0\end{pmatrix},\,
\tfrac{1}{2}\begin{pmatrix}
1&1&-1&-2\\2&0&-2&0\\1&-1&-1&-2\\-1&1&-1&0\end{pmatrix}.$$
Order of $\Gamma_H$: 96. Polynomial conditions:
\begin{align*}
&\lambda_1\lambda_2\lambda_3\lambda_4(\lambda_1-\lambda_3)
(\lambda_1^2-\lambda_4^2)(\lambda_3+\lambda_4)(\lambda_1-2\lambda_3-\lambda_4)
(\lambda_1-\lambda_3-\lambda_4)\neq 0,\,\\
& \lambda_2+\delta\lambda_3+\epsilon\lambda_4\neq 0,\, \lambda_1+\delta\lambda_2
+\epsilon\lambda_3\neq 0,\, \lambda_1+\delta\lambda_2-\lambda_3-2\lambda_4
\neq 0,\, 2\lambda_1+\delta\lambda_2-\lambda_3-\lambda_4\neq 0,
\end{align*}
where $\epsilon,\delta\in \{1,-1\}$.
\item[Class 29]
Generators of $\Gamma_H$:
$$\tfrac{1}{2}\begin{pmatrix}
 1 & 0 & 1 & 1 & 0\\2 & 0 & 0 & 0 & 0\\0 & 0 & 0 & 0 & 2\\ 
 1 & -2 & 1 & -1 & 0\\-1 & 2 & 1 & -1 & 0
\end{pmatrix},\,
\tfrac{1}{2}\begin{pmatrix}
  1 & 0 & 1 & -1 & 0\\0 & 1 & 0 & -1 & 1 \\
  2 & -1 & 0 & 1 & 1\\-1 & -1 & 1 & 0 & 1 \\
  1 & -2 & -1 & -1 & 0
\end{pmatrix},\,
\tfrac{1}{2}\begin{pmatrix}
  -1 & 0 & 1 & 1 & 0\\-1 & 1 & 1 & 0 & -1\\
  -1 & -1 & -1 & 0 & -1\\-2 & 1 & 0 & -1 & 1\\
     -1 & 2 & -1 & 1 & 0 
\end{pmatrix}.$$
Order of $\Gamma_H$: 1440. Polynomial conditions:
\begin{align*}
&\lambda_1\lambda_2\lambda_3\lambda_4\lambda_5(\lambda_1-\lambda_2)\neq 0,\,\\
& \lambda_2+\delta\lambda_4+\epsilon\lambda_5\neq 0,\, \lambda_1+\delta\lambda_3
+\epsilon\lambda_4\neq 0,\, \lambda_1-2\lambda_2+\delta\lambda_3+\epsilon \lambda_4\neq 0,\\
&\lambda_1+\delta\lambda_2+\epsilon \lambda_3+\nu\lambda_5\neq 0,\,
2\lambda_1-\lambda_2+\delta\lambda_4+\epsilon\lambda_5\neq 0,
\end{align*}
where $\epsilon,\delta,\nu\in \{1,-1\}$.
\item[Class 30]
Generators of $\Gamma_H$:
$$
\tfrac{1}{2}\begin{pmatrix}
-2 & 0 & 0 & 0 & 0\\0 & -1 & 1 & -1 & -1 \\ 
0 & 1 & 1 & -1 & 1\\0 & 1 & 1 & 1 & -1 \\ 0 & -1 & 1 & 1 & 1
\end{pmatrix},\,
\tfrac{1}{2}\begin{pmatrix}
-2 & 0 & 0 & 0 & 0\\0 & 1 & -1 & 1 & 1\\
0 & 1 & -1 & -1 & -1\\0 & -1 & -1 & -1 & 1\\0 & 1 & 1 & -1 & 1 
\end{pmatrix},\,
\tfrac{1}{2}\begin{pmatrix}
2 & 0 & 0 & 0 & 0\\0 & 1 & 1 & 1 & -1\\0 & 1 & 1 & -1 & 1 \\
0 & 1 & -1 & 1 & 1\\0 & -1 & 1 & 1 & 1 
\end{pmatrix}.$$
Order of $\Gamma_H$: 768. Polynomial conditions:
\begin{align*}
&\lambda_1\lambda_2\lambda_3\lambda_4\lambda_5(\lambda_2^2-\lambda_4^2)
(\lambda_3^2-\lambda_5^2)\neq 0,\,\\
& \lambda_2+\delta\lambda_3+\epsilon\lambda_4+\nu\lambda_5\neq 0,\,
\lambda_1+\delta\lambda_4+\epsilon\lambda_5\neq 0,\, \lambda_1+\delta\lambda_3
+\epsilon\lambda_4\neq 0,\\
& \lambda_1+\delta\lambda_2+\epsilon \lambda_5\neq 0,\,
\lambda_1+\delta\lambda_2+\epsilon \lambda_3\neq 0,
\end{align*}
where $\epsilon,\delta,\nu\in \{1,-1\}$.
\item[Class 31]
Generators of $\Gamma_H$:
$$
\begin{pmatrix}
0 & 0 & 1 & 0 & 0 & 0\\0 & 0 & 0 & 0 & 1 & 0\\1 & 0 & 0 & 0 & 0 & 0 \\
0 & 0 & 0 & 0 & 0 & -1\\0 & 0 & 0 & 1 & 0 & 0\\0 & 1 & 0 & 0 & 0 & 0
\end{pmatrix},\,
\tfrac{1}{2}\begin{pmatrix}
0 & 1 & 0 & 1 & 1 & -1\\1 & 1 & 1 & -1 & 0 & 0 \\
0 & -1 & 0 & -1 & 1 & -1\\-1 & 1 & -1 & -1 & 0 & 0\\
-1 & 0 & 1 & 0 & -1 & -1\\-1 & 0 & 1 & 0 & 1 & 1 
\end{pmatrix}
$$
Order of $\Gamma_H$: 23040. Polynomial conditions:
\begin{align*}
&\lambda_1\lambda_2\lambda_3\lambda_4\lambda_5\lambda_6\neq 0,\,\\
&\lambda_i+\delta\lambda_j+\epsilon\lambda_k\neq 0,\,\,(i,j,k)\in \{(1,2,5),
(1,4,6),(2,3,6),(3,4,5)\},\\
& \lambda_i+\delta\lambda_j+\epsilon\lambda_k+\nu\lambda_l\neq 0,\,\,(i,j,k,l)
\in \{((1,2,3,4),(1,3,5,6),(2,4,5,6)\},
\end{align*}
where $\epsilon,\delta,\nu\in \{1,-1\}$.
\item[Class 32]
Generators of $\Gamma_H$:
$$
\tfrac{1}{2}\begin{pmatrix}
1 & 1 & -1 & 0 & 0 & -1 & 0\\1 & 0 & 0 & 1 & 0 & 1 & -1 \\ 
1 & -1 & 0 & 0 & -1 & 0 & 1\\0 & 1 & 1 & 1 & 0 & 0 & 1 \\ 
0 & -1 & 0 & 1 & 1 & -1 & 0\\1 & 0 & 1 & -1 & 1 & 0 & 0 \\ 
0 & 0 & 1 & 0 & -1 & -1 & -1
\end{pmatrix},\,
\tfrac{1}{2}\begin{pmatrix}
0 & 1 & -1 & 1 & 0 & 0 & -1\\1 & 0 & -1 & -1 & -1 & 0 & 0 \\ 
0 & -1 & 0 & 1 & -1 & 1 & 0\\1 & -1 & 0 & 0 & 1 & 0 & -1 \\ 
1 & 1 & 1 & 0 & 0 & 1 & 0\\1 & 0 & 0 & 1 & 0 & -1 & 1 \\ 
0 & 0 & 1 & 0 & -1 & -1 & -1 
\end{pmatrix}
$$
Order of $\Gamma_H$: 2903040. Polynomial conditions:
\begin{align*}
&\lambda_1\lambda_2\lambda_3\lambda_4\lambda_5\lambda_6\lambda_7\neq 0,\,\\
& \lambda_i+\delta\lambda_j+\epsilon\lambda_k+\nu\lambda_l\neq 0,\,\,(i,j,k,l)\in \{((1,2,3,6),
(1,2,5,7),\\
& (1,3,4,5),(1,4,6,7),(2,3,4,7),(2,4,5,6),
		(3,5,6,7)\},
\end{align*}
where $\epsilon,\delta,\nu\in \{1,-1\}$.
\end{enumerate}

\begin{thebibliography}{10}

\bibitem{ACD}
Dmitri V. Alekseevsky, Vicente Cortés and Chandrashekar Devchand.
 Yang-Mills connections over manifolds with Grassmann structure
{\em J. Math. Phys.} 44 no. 12, 6047--6076, 2003.

\bibitem{Antorig} L.V. Antonyan. The classification of four-vectors of the eight-dimensional space. Trudy Seminara po Vektornomu i Tenzornomu Analizu. 20, 1981.  
\bibitem{berhuy} G.\ Berhuy.
An Introduction to Galois Cohomology and its Applications. 
London Mathematical Society Lecture Note Series. Cambridge: Cambridge University Press, 2010.

\bibitem{berg}
M. Berger. Sur les groupes d'holonomie homogènes des variétés a connexion affines et des variétés riemanniennes, {\em Bull. Soc. Math. France}, 83, 279--330, 1955.

\bibitem{borel}
Armand Borel.
\newblock {\em Linear algebraic groups}.
\newblock Springer-Verlag, Berlin, Heidelberg, New York, second edition, 1991.

\bibitem{bor21}
Mikhail Borovoi. Real points in a homogeneous space of a real algebraic group.
{\tt  	arXiv:2106.14871 [math.AG]}, 2021.

\bibitem{borwdg}
Mikhail Borovoi and Willem A.~de Graaf.
\newblock Computing {G}alois cohomology of a real linear algebraic group.
{\em J. Lond. Math. Soc.} (2) 109 no. 5, Paper No. e12906, 2024.

\bibitem{bgl}
Mikhail Borovoi, Willem A.~de Graaf and H\^ong V\^an L\^e. 
Classification of real trivectors in dimension nine.
{\em J. Algebra} 603 118-163, 2022.

\bibitem{magma}
Wieb Bosma, John Cannon, and Catherine Playoust.
\newblock The {M}agma algebra system. {I}. {T}he user language.
\newblock {\em J. Symbolic Comput.}, 24(3-4):235--265, 1997.
\newblock Computational algebra and number theory (London, 1993).

\bibitem{bou3}
N.~Bourbaki.
\newblock {\em {Groupes et Alg\`{e}bres de Lie, Chapitres VII et VIII}}.
\newblock Hermann, Paris, 1975.

\bibitem{bh89}
R.L. Bryant and R. Harvey. Submanifolds in hyper-Kähler geometry. {\em J. Amer. Math. Soc.} 2 
1--31, 1989.

\bibitem{cmt}
Arjeh M. Cohen, Scott H. Murray and Donald E. Taylor.
Computing in groups of Lie type.
{\em Math. Comp.} 73, no. 247, 1477–1498, 2004.

\bibitem{cms}
D. Conti, T.B. Madsen and S. Salamon. Quaternionic Geometry in Dimension 8. In {\em Geometry and Physics: A Festschrift in honour of Nigel Hitchin} Vol. 1 (eds: A. Dancer, J.E. Andersen and  O. García-Prada), Oxford University Press, 91--113, 2018.

\bibitem{clo}
David~A. Cox, John Little, and Donal O'Shea.
\newblock {\em Ideals, varieties, and algorithms}.
\newblock Undergraduate Texts in Mathematics. Springer, Cham, fourth edition,
  2015.
\newblock An introduction to computational algebraic geometry and commutative
algebra.

\bibitem{DGS2}
Emanuele Di Bella, Willem A. de Graaf and Andrea Santi.
Some rigidity results for supergravity backgrounds in 11 dimensions, {\em preprint}.

\bibitem{dgmo22}  H. Dietrich, Willem A. de Graaf, A. Marrani, M. Origila. 
Classification of four qubit states and their stabilisers under SLOCC operations
{\it J. Phys. A} 55 no. 9, Paper No. 095302, (2022).

\bibitem{gap4}
  The GAP~Group, \emph{GAP -- Groups, Algorithms, and Programming, 
  Version 4.14.0}; 
  2024,
  {\tt https://www.gap-system.org}.


\bibitem{gra16}
Willem A.~de Graaf.
\newblock {\em Computation with linear algebraic groups}.
\newblock Monographs and Research Notes in Mathematics. CRC Press, Boca Raton,
  FL, 2017.

\bibitem{zarclos}
Willem A.~de Graaf.
Computing the {Z}ariski closure of a finitely generated rational matrix group.
{\tt https://arxiv.org/pdf/2505.02577}, 2025.

\bibitem{gl24} Willem A. de Graaf, H\^ong V\^an L\^e.
Semisimple elements and the little Weyl group of real semisimple $\Z_m$-graded Lie algebras
{\it Linear Algebra Appl.} 703 423--445 (2024).
  
\bibitem{hum}
James~E. Humphreys.
\newblock {\em Introduction to {L}ie algebras and representation theory},
  volume~9 of {\em Graduate Texts in Mathematics}.
\newblock Springer-Verlag, New York-Berlin, 1978.
\newblock Second printing, revised.

\bibitem{jac}
Nathan Jacobson.
\newblock {\em Lie algebras}.
\newblock Interscience Tracts in Pure and Applied Mathematics, No. 10.
  Interscience Publishers (a division of John Wiley \& Sons), New York-London,
  1962.

\bibitem{kac}
V.~G. Kac.
\newblock {\em Infinite Dimensional Lie Algebras}.
\newblock Cambridge University Press, Cambridge, third edition, 1990.  

\bibitem{Antotrad}
L. Oeding.
A translation of ``Classification of four-vectors of an 8-dimensional space'', by Antonyan, L. V., with an appendix by the translator,
{\it Trans. Moscow Math. Soc.} 83, 227-250 (2022).

\bibitem{popov}
A. M. Popov, 
Finite isotropy subgroups in general position of simple linear Lie groups. (English. Russian original) 
{\em Trans. Mosc. Math. Soc.} 1986, 3-63 (1986); translation from Tr. Mosk. Mat. O.-va. 48, 7-59 (1985).

\bibitem{salamon}
S. Salamon. {\em Riemannian Geometry and Holonomy Groups.} Longman Scientific and Technical, Harlow, Essex, UK, 1989.

\bibitem{serre}
J.-P. Serre. {\em Galois cohomology}, Springer-Verlag, Berlin, 1997.

\bibitem{steinberg}
R.~Steinberg.
\newblock {\em Lectures on {C}hevalley groups}.
\newblock Yale University, New Haven, 1967.
\newblock Notes prepared by John Faulkner and Robert Wilson.

\bibitem{steinberg75}
Robert Steinberg.
\newblock Torsion in reductive groups.
\newblock {\em Advances in Math.}, 15:63--92, 1975.


\bibitem{vinberg}
{\`E}.~B. Vinberg. The {W}eyl group of a graded {L}ie algebra. {\em Izv. Akad. Nauk SSSR Ser. Mat.}, 40(3):488--526, 1976.
English translation: Math. USSR-Izv. 10, 463--495 (1976).

\bibitem{vinberg2}
{\`E}.~B. Vinberg.
\newblock Classification of homogeneous nilpotent elements of a semisimple
  graded {L}ie algebra.
\newblock {\em Trudy Sem. Vektor. Tenzor. Anal.}, (19):155--177, 1979.
\newblock English translation: Selecta Math. Sov. 6, 15-35 (1987).


\bibitem{elashvin}
{\`E}.~B. Vinberg and A.~G. Elashvili. A classification of the three-vectors of nine-dimensional space.
{\em Trudy Sem. Vektor. Tenzor. Anal.}, 18:197--233, 1978. English translation: Selecta Math. Sov., 7, 63-98, (1988).


\end{thebibliography}
\end{document}